
\documentclass[12pt]{amsart}
\usepackage{amssymb,latexsym,comment,url}
\usepackage[all]{xy}

\newcommand{\aaa}{\mathfrak{a}}
\newcommand{\bbb}{\mathfrak{b}}
\newcommand{\Aut}{\operatorname{Aut}}
\newcommand{\divv}{\operatorname{div }}
\newcommand{\Image}{\operatorname{Im}}
\newcommand{\Br}{\operatorname{Br}}

\newcommand{\CL}{{\mathcal L}}

\newcommand{\Gal}{\operatorname{Gal}}

\newcommand{\ord}{\operatorname{ord}}
\newcommand{\Ind}{\operatorname{Ind}}
\newcommand{\F}{{\mathbb F}}
\newcommand{\GL}{\operatorname{GL}}

\newcommand{\summ}{\operatorname{sum}}
\newcommand{\Pic}{\operatorname{Pic}}
\newcommand{\Mat}{\operatorname{Mat}}
\newcommand{\Map}{\operatorname{Map}}
\newcommand{\Kbar}{\overline{K}}
\newcommand{\Lbar}{\overline{L}}
\newcommand{\lin}{{\lambda}} 
\newcommand{\linspec}{{\ell}} 
\newcommand{\Qbar}{\overline{\Q}}
\newcommand{\Rbar}{\overline{R}}

\newcommand{\isom}{\cong}
\newcommand{\nr}{{\text{\rm nr}}}

\newcommand{\zeroE}{{0}} 
\newcommand{\OO}{{\mathcal{O}}}
\newcommand{\PP}{{\mathbb P}}
\newcommand{\pp}{{\mathfrak p}}
\newcommand{\qq}{{\mathfrak q}}
\newcommand{\Q}{{\mathbb Q}}

\newcommand{\Res}{{\operatorname{res}}}
\newcommand{\Sset}{{\mathcal S}}
\newcommand{\rank}{\operatorname{rank}}
\newcommand{\ra}{{\longrightarrow}}
\newcommand{\recip}[1]{\tfrac{1}{#1}}
\newcommand{\Tr}{\operatorname{Tr}}
\newcommand{\Ob}{\operatorname{Ob}}

\newcommand{\Z}{{\mathbb Z}}
\newcommand{\mult}{{\times}}

\DeclareMathOperator{\inv}{inv}

\newcommand{\bbQ}{{\Q}}
\newcommand{\bbZ}{{\Z}}

\newcommand{\Sha}{\mbox{\wncyr Sh}}

\newfont{\wncyr}{wncyr10 at 12pt}
\newfont{\wncyrten}{wncyr10 at 10pt}

\newtheorem{proposition}{Proposition}[section]
\newtheorem{theorem}[proposition]{Theorem}
\newtheorem{lemma}[proposition]{Lemma}
\newtheorem{corollary}[proposition]{Corollary}

\theoremstyle{definition}
\newtheorem{definition}[proposition]{Definition}
\newtheorem{Remark}[proposition]{Remark}
\newtheorem{Problem}[proposition]{Problem}

\addtolength{\hoffset}{-1cm}
\addtolength{\textwidth}{2cm}

\setlength{\marginparsep}{2mm}
\setlength{\marginparwidth}{20mm}

\begin{document}

\title[Computing the Cassels--Tate pairing]%
{Computing the Cassels--Tate pairing on the 3-Selmer group of an elliptic curve}

\author{Tom~Fisher}
\address{University of Cambridge,
         DPMMS, Centre for Mathematical Sciences,
         Wilberforce Road, Cambridge CB3 0WB, UK}
\email{T.A.Fisher@dpmms.cam.ac.uk}

\author{Rachel~Newton}
\address{University of Leiden,
Mathematical Institute,
PO Box 9512,
2300 RA Leiden,
The Netherlands}
\email{newtonrd@math.leidenuniv.nl}

\date{6th June 2013}  

\begin{abstract}
  We extend the method of Cassels for computing the Cassels--Tate
  pairing on the $2$-Selmer group of an elliptic curve, to the case of
  $3$-Selmer groups. This requires significant modifications
  to both the local and global parts of the calculation.  Our method
  is practical in sufficiently small examples, and can be used to
  improve the upper bound for the rank of an elliptic curve obtained
  by $3$-descent.
\end{abstract}

\maketitle

\renewcommand{\baselinestretch}{1.1}
\renewcommand{\arraystretch}{1.3}
\renewcommand{\theenumi}{\roman{enumi}}
\renewcommand{\theenumii}{\alph{enumii}}

\section*{Introduction}

The determination of the Mordell--Weil group $E(K)$ of an elliptic
curve $E$ over a number field $K$ is usually tackled by means of
computing the $n$-Selmer group $S^{(n)}(E/K)$ for some integer $n \ge
2$.  Since $E(K)/nE(K)$ injects into $S^{(n)}(E/K)$, and the latter is
finite and effectively computable, this approach gives an upper bound
for the rank of $E(K)$. However, this upper bound will not be sharp if
the Tate--Shafarevich group $\Sha(E/K)$ contains elements of order
$n$.

Let $p$ be a prime. The Kummer exact sequences for
multiplication-by-$p$ and multiplication-by-$p^2$ on $E$ fit into a
commutative diagram
\[ \xymatrix{ E(K) \ar[r]^{p^2} \ar[d]_p & E(K) \ar@{=}[d] \ar[r] & 
S^{(p^2)}(E/K) \ar[d]^\alpha \ar[r] & \Sha(E/K)[p^2] \ar[r] \ar[d]_p & 0 \\
E(K) \ar[r]^{p} & E(K) \ar[r] & S^{(p)}(E/K) \ar[r] & \Sha(E/K)[p] \ar[r] & 0
\rlap{.} } \]
We therefore have inclusions 
\begin{equation}
\label{incl}
E(K)/p E(K) \subset \Image(\alpha) \subset S^{(p)}(E/K).
\end{equation}
Cassels \cite{CasselsIV} constructed an alternating bilinear pairing 
\begin{equation}
\label{pairing:Sp}
S^{(p)}(E/K) \times S^{(p)}(E/K) \to \Q/\Z 
\end{equation}
whose kernel is the image of $\alpha$.  If we compute this pairing,
and find it is non-trivial, then by~(\ref{incl}) we get a better
upper bound for the rank of $E(K)$ than was obtained by computing
$S^{(p)}(E/K)$. In such cases, we also learn that the $p$-torsion of
$\Sha(E/K)$ is non-trivial.

Cassels~\cite{Cassels98} showed how to compute the
pairing~(\ref{pairing:Sp}) in the case $p=2$. We now generalise to the
case $p=3$.  Therefore, as a starting point for our work, we rely on
the algorithms for computing $S^{(3)}(E/K)$, as described in
\cite{SS}, and for representing its elements as plane cubics, as
described in \cite{descsum}.  Cassels' method (in the case $p=2$)
involves both a local part (computing a certain local pairing), and a
global part (solving conics over the field of definition of a
$2$-torsion point of $E$). Both parts require significant
modification when $p>2$.

In the case $p=2$, the local pairing turns out to be the Hilbert
norm residue symbol. However, since the local pairing is symmetric and
the Hilbert norm residue symbol is skew-symmetric, this cannot be true
for $p > 2$. A further difficulty is that on passing to a finite
extension of local fields, the values of the local pairing are
multiplied by the degree of the field extension. So if
$[K_v(E[p]):K_v]$ is divisible by $p$, then we cannot reduce to the
case, treated in \cite{O`Neil}, where $E$ has all $p$-torsion points
defined over $K_v$. In Section~\ref{sec:loc}, we nonetheless show how
to write the local pairing (for $p$ odd) in terms of Hilbert norm
residue symbols, and make this completely explicit in the case $p=3$.

In Section~\ref{sec:glob}, we generalise the global part of Cassels'
method to the case $p=3$. In fact, we solve a more general problem
about $3 \times 3 \times 3$ cubes (as studied in \cite{BH}, \cite{DG},
\cite{HoThesis}, \cite{Ng}), using the work of Haile \cite{Haile} and
Kuo \cite{Kuo} on the generalisation of Clifford algebras to cubic
forms. Our solution to this more general problem works by reducing it
to that of trivialising a $3 \times 3$ matrix algebra over a field
$L$. In our application to computing the Cassels--Tate pairing, $L$ is
the field of definition of a $3$-torsion point of~$E$.

The problem of trivialising an $n \times n$ matrix algebra (that is,
given structure constants for an $L$-algebra known to be isomorphic to
$\Mat_n(L)$, find such an isomorphism explicitly) is equivalent in the
case $n=2$ to solving a conic. For $n > 2$, this problem has been
studied in \cite[Section 5]{GHPS}, \cite[Paper~III,
Section~6]{descsum}, \cite{IRS}, with the result that practical
algorithms are available if both $n$ and the discriminant of the
number field $L$ are sufficiently small. However, since for us $L$ is
the field of definition of a $3$-torsion point (which typically has
degree $8$), we have so far only been able to compute a few small
examples.

In Section~\ref{sec:ex}, we illustrate our work by computing the
Cassels--Tate pairing on the $3$-Selmer group of a specific elliptic
curve $E/\Q$. 
To make the example interesting $E$ was chosen
from Cremona's tables~\cite{CrTables} so that it does not admit any rational
$3$-isogenies and $\Sha(E/\Q)[3] \not = 0$. To make the computations
practical we also chose $E$ so that the degree $8$ number
field $L$ has reasonably small discriminant. Strictly speaking, we only
compute the pairing up to a global choice of sign, but this does not
matter for applications.

Computing the pairing~(\ref{pairing:Sp}) gives the same information
(in terms of improving our upper bound for the rank) as a
$p^2$-descent.  In \cite{Cassels98}, Cassels claims that his method
(for $p=2$) is more efficient than performing a $4$-descent, as
described in \cite{MSS}. Subject to finding a better algorithm for
trivialising matrix algebras over number fields, our method (for
$p=3$) should also be more efficient than performing a $9$-descent, as
described in \cite{creutz}. One advantage of computing the pairing,
compared to performing a $p^2$-descent directly, is that fewer class
group calculations are required. Another advantage is that we only
need to compute the pairing on a basis for $S^{(p)}(E/K)$, whereas
$p^2$-descent must be run on every element of $S^{(p)}(E/K)$.

The pairing~(\ref{pairing:Sp}) is, in fact, induced by a pairing
\[ \langle~,~\rangle : \Sha(E/K) \times \Sha(E/K) \to \Q/\Z \] and
this is the form in which the Cassels--Tate pairing is usually
written.  Following the terminology in \cite{PS}, the original
definition in \cite[Section 3]{CasselsIV} is called the ``homogeneous
space definition'' (see also \cite[I, Remark 6.11]{ADT}, \cite[Section
2.2]{CTPPS}), whereas the variant used in \cite[Section 6]{CasselsIV}
is called the ``Weil pairing definition'' (see also \cite[I,
Proposition 6.9]{ADT}, \cite[Section 2.2]{CTPPS}).  Both the method in
\cite{Cassels98} and our generalisation use the Weil pairing
definition.

In Section~\ref{sec:CTpair}, we use the description of $H^1(K,E[p])$
in \cite{SS} to make the pairing explicit for $p > 2$.  The formula we
give is for $\langle x,y \rangle$ where $x,y \in \Sha(E/K)$ and
$py=0$. Since we do not require $px = 0$, our work might be described
(following \cite{SD}) as doing a $p^n$-descent for all $n$. We take
$p$ an odd prime, as the case $p=2$ is already described in
\cite{Cassels98}, \cite{Yoga}, \cite{SD}.

The Weil pairing definition was used in \cite{CasselsI}, where Cassels
computed the pairing on the $3$-isogeny Selmer groups of certain
elliptic curves with $j$-invariant $0$. This is currently being
generalised to other isogenies of prime degree by the first author's
student M. van Beek.  The homogeneous space definition has also been
used for explicit computation, most notably in the Magma \cite{magma}
implementation of the pairing on $S^{(2)}(E/\Q)$ due to S. Donnelly.
It might be interesting to investigate how this approach generalises
to the case $p=3$, but we have not done so.

We write $H^i(K,-)$ for the Galois cohomology group $H^i(\Gal(\Kbar/K),-)$,
and $E[p]$ for the kernel of multiplication-by-$p$ on $E(\Kbar)$.
The completion of a number field $K$ at a place $v$ is denoted
$K_v$. We write $M_K$ for the set of all places of $K$.
Since we take $p$ an odd prime, we can ignore the infinite places.

A Magma file containing some of the formulae in Sections~\ref{sec:glob} 
and~\ref{sec:ex} may be found accompanying the arXiv version of this
article. 

\subsection*{Acknowledgements}
We thank Manjul Bhargava, Wei Ho and Hendrik Lenstra for useful
mathematical conversations and for pointing out some of the references.
All computer calculations in Sections~\ref{sec:glob} 
and~\ref{sec:ex} were performed using Magma \cite{magma}.
The second author is grateful for funding from DIAMANT.

\section{The Cassels--Tate pairing}
\label{sec:CTpair}

Let $K$ be a field of characteristic $0$, and $\Kbar$ its algebraic
closure. Let $E/K$ be an elliptic curve and $p$ an odd prime.  The
$p$-torsion subgroup $E[p]$ may be regarded as a 2-dimensional affine
space over $\F_p$. We write $\PP(E[p])$ for the set of lines passing
through $\zeroE$, and $\Lambda$ for the set of lines not passing
through $\zeroE$.  The \'etale algebra of $X$, a finite set with
Galois action, is the $K$-algebra $R = \Map_K(X,\Kbar)$ of all Galois
equivariant maps from $X$ to $\Kbar$. It is a product of field
extensions of $K$, one for each Galois orbit of elements in $X$.  We
also write $\Rbar = R \otimes_K \Kbar = \Map(X,\Kbar)$ for the
$\Kbar$-algebra of all maps from $X$ to $\Kbar$, and let
$\Gal(\Kbar/K)$ act on these maps in the natural way, that is, by
conjugation.

Let $L^+$, $L$, $L'$ and $M$ be the \'etale algebras of $\PP(E[p])$,
$E[p] \setminus \{ \zeroE \}$, $\Lambda$ and
\[ \{ (T,\lin) \in (E[p] \setminus \{ \zeroE \}) \times \Lambda : T
\in \lin \}.\] These are $K$-algebras of dimensions $p+1$, $p^2-1$,
$p^2-1$ and $p(p^2-1)$. There are natural inclusions $L^+ \subset L
\subset M$ and $L' \subset M$. We fix $\nu \in \Z$ a primitive root
mod $p$ and let $\sigma_\nu$ be the generator of $\Aut(L/L^+)$ induced
by multiplication-by-$\nu$ on $E[p]$. The inclusion $L \subset M$
followed by the norm map $N_{M/L'}$ is given by
\[   (T \mapsto \alpha_T)  \mapsto 
       ( \lin \mapsto \textstyle\prod_{T \in \lin} \alpha_T ). \]

Let $w: E[p] \to \mu_p(\Lbar)$ be the map induced by the
Weil pairing. 
This induces a group homomorphism
\begin{equation}
\label{w1}
 w_1 : H^1(K,E[p]) \to L^\mult/(L^\mult)^p. 
\end{equation}
Explicitly, if $\xi \in H^1(K,E[p])$ is represented by a cocycle
$(\sigma \mapsto \xi_\sigma)$ then by Hilbert's theorem 90, there
exists $\gamma \in \Lbar^\mult$ such that $w(\xi_\sigma) =
\sigma(\gamma)/\gamma$ for all $\sigma \in \Gal(\Kbar/K)$.  Then
$\alpha = \gamma^p$ belongs to $L^\mult$ and we define $w_1(\xi) =
\alpha \mod{(L^{\mult})^p}$.

\begin{lemma}
\label{lem:SS}
The map $w_1$ is injective and has image
\[ \left\{ \alpha \in L^\mult/(L^\mult)^p \bigg|
  \begin{array}{r@{\,\,}c@{\,\,}l@{}l}
    \sigma_{\nu}( \alpha ) &\equiv& \alpha^\nu & \mod{(L^{\mult})^p} \\
    N_{M/L'} ( \alpha ) &\equiv& 1 & \mod{(L'^\mult)^p} \end{array} \right\}. \]
\end{lemma}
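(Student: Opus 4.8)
The plan is to identify $w_1$ with a more familiar cohomological map and then read off both injectivity and the image from known properties of Weil-pairing-type maps. First I would observe that the map $w\colon E[p] \to \mu_p(\Lbar)$ is a $\Gal(\Kbar/K)$-equivariant injection of Galois modules: on the $\Kbar$-points of $E[p]$ it sends a nonzero $T$ to the map $S \mapsto e_p(S,T)$, where $e_p$ is the Weil pairing, and this is injective because $e_p$ is nondegenerate; equivariance is the standard Galois-equivariance of $e_p$. Thus $w$ realizes $E[p]$ as a Galois submodule of $\Rbar^\mult[p] = \mu_p(\Lbar)^{\oplus(p^2-1)}$ sitting inside $\Lbar^\mult$, and the induced map on $H^1$ followed by the connecting map for $1 \to \mu_p \to \Lbar^\mult \xrightarrow{p} (\Lbar^\mult)^p \to 1$ (componentwise) is exactly $w_1$ as described via the explicit cocycle recipe with $\gamma \in \Lbar^\mult$ and $\alpha = \gamma^p$. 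Concretely $\Lbar^\mult \cong \Map(E[p]\setminus\{\zeroE\},\Kbar^\mult)$, and $w_1$ is the composite $H^1(K,E[p]) \to H^1(K,\Lbar^\mult[p]) \cong H^1(K,\mu_p(\Lbar)) \to \Lbar^\mult/(\Lbar^\mult)^p$, with the last two steps being isomorphism and the descent $H^1(K,\mu_p(\Lbar)) = H^1(K, \mathrm{Res}_{L/K}\mu_p) \cong L^\mult/(L^\mult)^p$ by Shapiro's lemma and Kummer theory.

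For injectivity, the cokernel of $w\colon E[p] \hookrightarrow \mu_p(\Lbar)$ is again a finite Galois module killed by $p$; taking cohomology, injectivity of $w_1$ amounts to the statement that $H^0(K, \mathrm{coker}) \to H^1(K, E[p])$ has trivial image, equivalently $H^0(K,\mu_p(\Lbar)) \to H^0(K,\mathrm{coker})$ is surjective. I would check this directly using the concrete description: an element of $H^0(K,\mathrm{coker})$ lifts to some $\gamma \in \Lbar^\mult$ with $\sigma(\gamma)/\gamma \in w(E[p])$ for all $\sigma$, i.e. to a cocycle, and it maps to $0$ in $H^1$ precisely when $\gamma$ can be adjusted by $H^0(K,\mu_p(\Lbar))$-values — but in fact the point is that $w_1(\xi) = \alpha$ determines $\xi$ because $\gamma$ is determined by $\alpha = \gamma^p$ up to a $p$-th root of unity in $\Lbar^\mult$, which only shifts the cocycle by a coboundary. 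So injectivity is essentially bookkeeping once the identification above is set up.

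The substantive part is the characterization of the image. The containment "$\subseteq$" I would verify by direct computation from the cocycle formula: if $\alpha = \gamma^p$ with $w(\xi_\sigma) = \sigma(\gamma)/\gamma$, then applying $\sigma_\nu$ (which on $\Lbar^\mult = \Map(E[p]\setminus\{\zeroE\},\Kbar^\mult)$ is precomposition with multiplication-by-$\nu$ on $E[p]$) and using bilinearity of the Weil pairing gives $\sigma_\nu(\alpha) \equiv \alpha^\nu$; and the norm relation $N_{M/L'}(\alpha) \equiv 1$ comes from $\sum_{T \in \lin} e_p(S,T) = e_p(S, \sum_{T\in\lin} T)$ being trivial, because the sum of the three (for $p=3$; in general the $p$-element sum) nonzero points on a line $\lin \in \Lambda$ not through $\zeroE$ is a point on $E[p]$ in the direction of $\lin$ times $p$, hence is $\zeroE$ — so the product of Weil-pairing values is $1$, and this survives the descent to $L'$. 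The reverse containment "$\supseteq$" is the main obstacle: given $\alpha \in L^\mult$ satisfying the two congruences, I must produce a cocycle $\xi$ with $w_1(\xi) = \alpha$. The strategy is to take a $p$-th root $\gamma = \alpha^{1/p} \in \Lbar^\mult$, define $\xi_\sigma := w^{-1}(\sigma(\gamma)/\gamma)$, and check that (a) $\sigma(\gamma)/\gamma$ actually lands in the subgroup $w(E[p]) \subset \mu_p(\Lbar)$ — this is where the $\sigma_\nu$-eigenvector condition and the norm condition are used, since together they cut out exactly $w(E[p])$ inside $\Rbar^\mult[p] = \mu_p(\Lbar)$ as the intersection of the "$\nu$-eigenspace for the Galois action twisted by $\sigma_\nu$" and the "kernel of the line-sum map", by a rank count ($E[p]$ has $\F_p$-dimension $2$, and these two conditions cut the $(p^2-1)$-dimensional space down correspondingly); and (b) that $\sigma \mapsto \xi_\sigma$ is a cocycle, which is automatic since it is a coboundary valued in $\Lbar^\mult$ restricted to a submodule. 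I expect the delicate point to be making precise the claim that the two displayed conditions together characterize $w(E[p])$ inside $L^\mult/(L^\mult)^p$ at the level of the relevant $\mathrm{Gal}$-modules — i.e., that $w$ identifies $E[p]$ with $\{v \in \mu_p(\Lbar) : \sigma_\nu(v) = v^\nu,\ N_{M/L'}(v) = 1\}$ — which is a statement about the combinatorial geometry of $E[p]$ as an $\F_p$-plane (lines through and not through the origin, the scaling action of $\nu$) combined with nondegeneracy of the Weil pairing, and then transporting it through Shapiro/Kummer without losing information modulo $p$-th powers.
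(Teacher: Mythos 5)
The paper itself does not prove this lemma; it simply cites \cite{DSS} and \cite{SS}. Your overall framework --- identifying $w_1$ as $H^1$ of the Weil-pairing inclusion $w\colon E[p]\hookrightarrow \mu_p(\Lbar)$ followed by Kummer theory for \'etale algebras, and checking the ``$\subseteq$'' containment by the cocycle computation with $\gamma(\nu T)/\gamma(T)^\nu$ and $\prod_{T\in\lin}\gamma(T)$ --- is the right one and matches what the cited sources do. The forward containment and the combinatorial claim that the two conditions cut $\operatorname{Map}(E[p]\setminus\{\zeroE\},\mu_p)$ down to exactly $w(E[p])$ (a finite linear-algebra check over $\F_p$) are fine. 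But the two genuinely hard points are exactly the ones you leave unproved, so as it stands the proposal does not constitute a proof.

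First, your injectivity argument is circular. Replacing $\gamma$ by $\zeta\gamma$ with $\zeta\in\mu_p(\Lbar)$ changes the cocycle $\sigma\mapsto\sigma(\gamma)/\gamma$ by the coboundary $\sigma\mapsto\sigma(\zeta)/\zeta$, but this is a coboundary with values in $\mu_p(\Lbar)$, not necessarily one of the form $w(\sigma T - T)$ with $T\in E[p]$; so ``only shifts the cocycle by a coboundary'' assumes what is to be proved. The actual content, which you correctly reduce to but then dismiss as bookkeeping, is the surjectivity of $H^0(K,\mu_p(\Lbar))\to H^0(K,\mu_p(\Lbar)/w(E[p]))$, and this requires a real idea (in the literature it is handled by exhibiting a Galois-equivariant splitting of $w$, or by an analysis depending on the image of Galois in $\GL_2(\F_p)$); it is the whole point of \cite[Corollary~5.1]{SS}. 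Second, for ``$\supseteq$'': the hypotheses on $\alpha$ hold only modulo $p$th powers, so you cannot simply set $\gamma=\alpha^{1/p}$ and conclude that $\sigma(\gamma)/\gamma$ lands in $w(E[p])$ --- a priori $\sigma_\nu(\gamma)/\gamma^{\nu}$ is a root of unity times something nontrivial. The correct route is the long exact sequence for $0\to E[p]\to\mu_p(\Lbar)\to Q\to 0$ together with the embedding of $Q$ into $\mu_p(\Lbar)\oplus\mu_p(\overline{L'})$ given by your two conditions; equality of the two kernels then needs $H^1(K,Q)$ to inject into $H^1$ of that induced module, i.e.\ yet another $H^0$-surjectivity statement. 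You flag this as ``the delicate point'' but give no argument for it; it is precisely the content of \cite[Proposition~5.10]{SS}. Until those two cohomological surjectivities are established, the proof is incomplete.
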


\begin{proof}
  Injectivity is proved in \cite[Section~3]{DSS} and
  \cite[Corollary~5.1]{SS}. The image is described in
  \cite[Corollary~5.9 and Proposition~5.10]{SS}.
\end{proof}

We now suppose $K$ is a number field. Let $C/K$ be a principal
homogeneous space under $E$. Then $C$ is a smooth curve of genus one
with Jacobian $E$. We further suppose that $C$ is everywhere locally
soluble, that is, $C(K_v) \not= \emptyset$ for all places $v$ of $K$.
We write ``$\summ$'' for the natural isomorphism $\Pic^0(C) \isom E$.
We make frequent use of the fact that a divisor on $C$ is principal if
and only if it has degree $0$ and sum $\zeroE$.

For each $\zeroE \not= T \in E[p]$, there is a degree $0$ divisor
$\aaa_T$ on $C$ with $\summ(\aaa_T) = T$.  Since $C$ is everywhere
locally soluble we can choose the divisors $\aaa_T$ so that the map $T
\mapsto \aaa_T$ is Galois equivariant. The proof of this, as given in
\cite[Lemma~7.1]{CasselsIV} or \cite[Lemma~1]{SD}, uses the
local-to-global principle for the Brauer group of $K(T)$.  Since
$\summ(p \aaa_T) = p T=\zeroE$, there are rational functions $f_T \in
\Kbar(C)$ with $\divv(f_T) = p \aaa_T$.  By Hilbert's theorem 90, we
may scale the $f_T$ so that $f= (T \mapsto f_T)$ is Galois
equivariant.  Then $f$ is an element of $L(C) = L \otimes_K K(C) =
\Map_K(E[p] \setminus \{ \zeroE \},\Kbar(C))$.

The following lemma specifies a scaling of $f$ that is unique up to
multiplication by elements in the image of $w_1$.  We abbreviate
$N_{M(C)/L'(C)}$ as $N_{M/L'}$.

\begin{lemma}
\label{lem:scale-f}
Let $f \in L(C)$ as above. After multiplying $f$ by a suitable element
of $L^\mult$, there exist $r \in L(C)$ and $s \in L'(C)$ such that
\begin{equation}
\label{scale-f}
  \sigma_\nu(f) /f^{\nu} = r^p \qquad  
  \text{ and } \qquad N_{M/L'}(f) = s^p. 
\end{equation}
\end{lemma}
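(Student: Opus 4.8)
The plan is to compare divisors on $C$ in order to rewrite the two identities in~(\ref{scale-f}) as the problem of absorbing two explicit constants into $p$-th powers, and then to resolve that problem using the everywhere local solubility of $C$ together with Lemma~\ref{lem:SS}.

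First I would produce the constants. The $T$-component of $\sigma_\nu(f)/f^\nu$ has divisor $p(\aaa_{\nu T}-\nu\aaa_T)$, which has degree $0$ and sum $\nu T-\nu T=\zeroE$, so it is principal; and, by the descriptions of $L\subset M$ and of $N_{M/L'}$ recalled above, the $\lin$-component of $N_{M/L'}(f)$ has divisor $p\sum_{T\in\lin}\aaa_T$, of degree $0$ and sum $\sum_{T\in\lin}T$, which equals $\tfrac{1}{2}p(p-1)$ times a vector of $E[p]$ and hence $\zeroE$ because $p$ is odd --- the single point at which $p=2$ would behave differently. Realising these principal divisors by rational functions, and making the choices Galois equivariant by the Hilbert 90 argument (Shapiro's lemma together with the vanishing of $H^1$ with coefficients in $\Map(\,\cdot\,,\Kbar^\mult)$) already used to produce $f$, one gets $g\in L(C)$ and $k\in L'(C)$ with $\sigma_\nu(f)/f^\nu=c\,g^p$ and $N_{M/L'}(f)=d\,k^p$ for some constants $c\in L^\mult$ and $d\in L'^\mult$. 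As replacing $f$ by $fu$ with $u\in L^\mult$ multiplies $c$ by $\sigma_\nu(u)/u^\nu$ and $d$ by $N_{M/L'}(u)$, the lemma becomes the assertion that some $u\in L^\mult$ satisfies $\sigma_\nu(u)/u^\nu\equiv c^{-1}$ in $L^\mult/(L^\mult)^p$ and $N_{M/L'}(u)\equiv d^{-1}$ in $L'^\mult/(L'^\mult)^p$ simultaneously.

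Next I would record the constraints $c$ and $d$ satisfy automatically. Since $\Aut(L/L^+)=\langle\sigma_\nu\rangle$ has order $p-1$ prime to $p$, the ring $\F_p[\sigma_\nu]$ is semisimple and $L^\mult/(L^\mult)^p$ is an $\F_p[\sigma_\nu]$-module direct summand of $L(C)^\mult/(L(C)^\mult)^p$; reading $\sigma_\nu(f)/f^\nu=c\,g^p$ modulo $p$-th powers of functions then shows that $c$ already lies in the image of $\alpha\mapsto\sigma_\nu(\alpha)/\alpha^\nu$ on $L^\mult/(L^\mult)^p$, equivalently that $c^N\in(L^\mult)^p$ where $N=\sum_{i=0}^{p-2}\nu^{p-2-i}\sigma_\nu^{i}$, using $(\sigma_\nu-\nu)N=1-\nu^{p-1}\equiv 0\bmod p$. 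Applying $\sigma_\nu$ to $N_{M/L'}(f)=d\,k^p$ and substituting the first identity gives the compatibility $\sigma_\nu(d)/d^\nu\equiv N_{M/L'}(c)\bmod(L'^\mult)^p$, a relation preserved by $f\mapsto fu$ because $N_{M/L'}$ is $\sigma_\nu$-equivariant. Solving the first congruence by some $u_0$, the residual freedom in $u$ is the subgroup $A=\{\alpha:\sigma_\nu(\alpha)\equiv\alpha^\nu\}$ of $L^\mult/(L^\mult)^p$, whose intersection with $\ker N_{M/L'}$ equals $\Image(w_1)\isom H^1(K,E[p])$ by Lemma~\ref{lem:SS}; so the problem reduces to showing that one explicit element $\delta\in L'^\mult/(L'^\mult)^p$, manufactured from $c$, $d$ and $u_0$, lies in $N_{M/L'}(A)$. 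The compatibility relation forces $\sigma_\nu(\delta)\equiv\delta^\nu$, so this is a surjectivity statement for $N_{M/L'}$ between matching isotypic components.

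The hard part is exactly this surjectivity, and it is where the everywhere local solubility of $C$ must enter; I expect it to be the main obstacle. For each place $v$, a point $P_v\in C(K_v)$ gives an isomorphism $C_{K_v}\isom E_{K_v}$ under which $f$ acquires the same divisor data as the standard functions attached to the Weil pairing on $E$, and these can be normalised so that the $K_v$-analogue of~(\ref{scale-f}) holds; hence the pair $(c,d)$ lies in the image of the scaling map over $K_v$, i.e.\ the obstruction to our problem is locally trivial at every $v$. One then concludes that it is globally trivial by the local-to-global principle for the Brauer group of an appropriate \'etale algebra, just as in the proof that the $\aaa_T$ can be chosen Galois equivariantly. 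Making the local comparison with the Weil-pairing functions precise, and recasting the obstruction in a form to which this Hasse principle applies, is the real content of the argument; the divisor computation of the first step and the group-ring bookkeeping of the second are routine.
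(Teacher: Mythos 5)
Your first two paragraphs are essentially correct, and the first of them coincides with the paper's opening step: choosing $r$ and $s$ with $\divv(r_T)=\aaa_{\nu T}-\nu\aaa_T$ and $\divv(s_\lin)=\sum_{T\in\lin}\aaa_T$ gives~(\ref{scale-f}) up to constants $c\in L^\mult$, $d\in L'^\mult$, and your reduction of the lemma to finding $u\in L^\mult$ with $\sigma_\nu(u)/u^\nu\equiv c^{-1}$ and $N_{M/L'}(u)\equiv d^{-1}$ modulo $p$th powers, together with the semisimplicity bookkeeping for $\F_p[\sigma_\nu]$, is sound.

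The gap is in your final step, and it is not merely a matter of ``making things precise.'' The local-to-global principle you invoke --- local triviality of the obstruction implies global triviality, via the Hasse principle for the Brauer group of an \'etale algebra --- is not the right tool and fails in the generality you need. By Lemma~\ref{lem:SS} the kernel of $u\mapsto(\sigma_\nu(u)/u^\nu,\,N_{M/L'}(u))$ on $L^\mult/(L^\mult)^p$ is $\Image(w_1)\cong H^1(K,E[p])$, and correspondingly the obstruction to your element $\delta$ lying in $N_{M/L'}(A)$ is a class in $H^2(K,E[p])$ (more precisely, in the kernel of $H^2(K,E[p])\to\bigoplus_j\Br(K(T_j))$). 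Knowing that $(c,d)$ is locally in the image of the scaling map at every place only tells you that this class lies in the Tate--Shafarevich kernel $\Sha^2(K,E[p])$, which is Poitou--Tate dual to $\Sha^1(K,E[p])$ and need not vanish; it is also contained in the kernel of the map to $\bigoplus_j\Br(K(T_j))$, so the Brauer--Hasse principle for $L$ cannot see it. What actually kills the obstruction is the strictly stronger input used in the paper, namely Tate's theorem (\cite[Lemmas 5.1 and 6.1]{CasselsIV}) that the class of the everywhere locally soluble torsor $C$ is divisible by $p$ in $H^1(K,E)$. The paper exploits this directly: it takes a $p$-covering $\pi:C_1\to C$ defined over $K$, chooses $g\in L(C_1)$ with $\divv(g_T)=\pi^*\aaa_T$, scales $f$ so that $\pi^*f=g^p$, and observes that $\sigma_\nu(g)/g^\nu$ and $N_{M/L'}(g)$ descend to $C$, so both identities in~(\ref{scale-f}) hold exactly with no residual constants. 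To repair your argument you would need to identify your obstruction with the image of $[C]$ under the connecting map $H^1(K,E)\to H^2(K,E[p])$ and then cite Tate's divisibility theorem; that identification is precisely the content you have deferred, and the principle you name in its place does not suffice.
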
 
\begin{proof}
We choose $r \in L(C)$ and $s \in L'(C)$ satisfying
\begin{equation*}
  \divv(r_T) = \aaa_{\nu T} - \nu \aaa_T  \qquad  
  \text{ and } \qquad \divv(s_\lin) = \textstyle\sum_{T \in \lin} \aaa_T.
\end{equation*}
Then~(\ref{scale-f}) holds up to scalars. The construction of $s$ uses
the fact that the points on a line $\lin$ sum to zero, which in turn
depends on the fact $p$ is odd.

To remove the scalars, we use the result of Tate \cite[Lemmas 5.1 and
6.1]{CasselsIV} that, since $C$ is everywhere locally soluble, its
class in $H^1(K,E)$ is divisible by $p$.  If $C$ and $C_1$ correspond
to classes $x$ and $x_1$ in $H^1(K,E)$ with $p x_1 = x$ then there is
a commutative diagram
\[ \xymatrix{ C_1 \ar[r]^\pi \ar[d] & C \ar[d] \\ E \ar[r]^{[p]} & E }
\] where $\pi$ is a morphism defined over $K$, and the vertical maps
are isomorphisms defined over $\Kbar$.  We say that $\pi : C_1 \to C$
is a $p$-covering.  For $\bbb$ a divisor on $E$ we have $\summ([p]^*
\bbb) = p \summ(\bbb)$.  So there exists $g \in L(C_1)$ with
$\divv(g_T) = \pi^* \aaa_T$.  We now scale $f$ so that $\pi^*f = g^p$,
and scale $r$ and $s$ so that
\begin{equation*}
  \pi^* r = \sigma_\nu(g) /g^{\nu}  \qquad  
  \text{ and } \qquad \pi^* s = N_{M/L'} (g).
\end{equation*}
It is then easy to check that (\ref{scale-f}) holds exactly.
\end{proof}

Let $v$ be a place of $K$. By the Weil pairing, cup product and 
the local invariant map there is a pairing 
\begin{equation}
\label{tatepairing}
(~,~)_v : H^1(K_v,E[p]) \times H^1(K_v,E[p]) \to \Q/\Z.
\end{equation}
It is known (see \cite[I, Theorem 3.2]{ADT},\cite{T})
that $(~,~)_v$ is symmetric and 
non-degenerate, and that the image of $E(K_v)/p E(K_v)$ is a 
maximal isotropic subspace. The last of these facts is referred to 
as {\em Tate local duality}. The local analogue of~(\ref{w1}) is
a map $w_{1,v}$ that fits in a commutative diagram
\[ \xymatrix{ 
 H^1(K,E[p]) \ar[r]^-{w_{1}}  \ar[d]_{\Res_v} & 
L^\mult/(L^\mult)^p \ar[d] \\
 H^1(K_v,E[p]) \ar[r]^-{w_{1,v}} & L_v^\mult/(L_v^\mult)^p } \]
where $L_v = L \otimes_K K_v$. We write $[~,~]_v$ for the pairing
induced by $(~,~)_v$ on the image of $w_{1,v}$. Since the 
local invariants of an element in $\Br(K)$ sum to zero, we have
the ``product formula'' \footnote{This is analogous to
the product formula for the Hilbert norm residue symbol.}
\begin{equation}
\label{prodform}
\sum_{v \in M_K} [ \alpha, \beta]_v = 0 
\end{equation}
for all $\alpha,\beta \in \Image(w_1)$.

\begin{theorem}
\label{thm:ctp}
Let $x,y \in \Sha(E/K)$ with $py = 0$. Let $C/K$ be a principal
homogeneous space under $E$ representing $x$, and let $\eta \in
S^{(p)}(E/K)$ be an element that maps to $y$.  Let $f \in L(C)$ be
scaled as in Lemma~\ref{lem:scale-f}, and for each place $v$ of $K$
choose a point $P_v \in C(K_v)$, avoiding the zeros and poles of the
rational functions $f_T$.  Then the Cassels--Tate pairing is given by
\begin{equation}
\label{def:ctp}
  \langle x, y \rangle = \sum_{v \in M_K} [ f(P_v), w_{1} (\eta) ]_v. 
\end{equation}
\end{theorem}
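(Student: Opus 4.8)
The plan is to verify that the right-hand side of~(\ref{def:ctp}) agrees with the Weil pairing definition of the Cassels--Tate pairing, following the template of \cite[Section~6]{CasselsIV} but adapting it to the description of $H^1(K,E[p])$ via $w_1$ given in Lemma~\ref{lem:SS}. Recall the Weil pairing definition: one writes $y = \partial \eta$ for $\eta \in H^1(K,E[p])$ (here $\eta \in S^{(p)}(E/K)$ maps to $y$), represents $x$ by the everywhere locally soluble torsor $C$, and for each place $v$ picks $P_v \in C(K_v)$; the cocycle $\sigma \mapsto \sigma(Q_v) - Q_v$ for a choice of $Q_v$ with $[p]Q_v = P_v$ (under a fixed $\Kbar$-isomorphism $C \cong E$) represents the image of $P_v$ under the connecting map $C(K_v) \to H^1(K_v, E[p])$; one then cups this local class with $\Res_v(\eta)$ via the Weil pairing, applies $\inv_v$, and sums. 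The core of the argument is to show that this recipe produces exactly $\sum_v [f(P_v), w_1(\eta)]_v$.

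First I would make explicit the identification of the local class of $P_v$. Using the $p$-covering $\pi : C_1 \to C$ from the proof of Lemma~\ref{lem:scale-f} and the function $g \in L(C_1)$ with $\divv(g_T) = \pi^*\aaa_T$ and $\pi^* f = g^p$, one checks that for $T \in E[p]\setminus\{\zeroE\}$ the quantity $f_T(P_v)$ is, up to $p$-th powers, the evaluation of the relevant coboundary: concretely, choosing $\tilde{P}_v \in C_1(\Kbar)$ above $P_v$, the map $\sigma \mapsto \sigma(\tilde{P}_v) - \tilde{P}_v \in E[p]$ (transported to $C_1 \cong_{\Kbar} E$) is a cocycle whose class is the image of $P_v$, and $g_T(\tilde P_v)$ satisfies $\sigma(g_T(\tilde P_v))/g_T(\tilde P_v) = w(\sigma(\tilde P_v) - \tilde P_v)$ evaluated appropriately — this is the standard computation relating evaluation of $f_T$ to the Weil pairing, and it identifies $f(P_v) \bmod (L_v^\mult)^p$ with $w_{1,v}$ applied to the local class of $P_v$. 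Granting this, $[f(P_v), w_1(\eta)]_v$ is precisely the value $(~,~)_v$ on the pair (image of $P_v$, $\Res_v \eta$), so the sum in~(\ref{def:ctp}) is literally the Weil pairing definition of $\langle x, y\rangle$.

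The main obstacle — and the step requiring the most care — is checking that $f(P_v)$ actually lies in $\Image(w_{1,v})$, i.e.\ that the constraints of Lemma~\ref{lem:SS} ($\sigma_\nu(\alpha) \equiv \alpha^\nu$ and $N_{M/L'}(\alpha) \equiv 1$ modulo $p$-th powers) are satisfied by $f(P_v)$. This is exactly what the careful scaling in Lemma~\ref{lem:scale-f} buys us: evaluating the identities $\sigma_\nu(f)/f^\nu = r^p$ and $N_{M/L'}(f) = s^p$ at $P_v$ gives $\sigma_\nu(f(P_v))/f(P_v)^\nu = r(P_v)^p$ and $N_{M/L'}(f(P_v)) = s(P_v)^p$, provided $P_v$ avoids the zeros and poles of all $f_T$ (hence also of the $r_T$ and $s_\lin$, whose divisors are supported on the $\aaa_T$) — which is why that genericity hypothesis is imposed on $P_v$. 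One must also confirm that the class in $H^1(K,E[p])$ obtained from $P_v$ locally is independent of the auxiliary choices (the lift $Q_v$, the $\Kbar$-isomorphism $C \cong E$) modulo what the pairing sees, and that well-definedness of the whole sum modulo $\Q/\Z$ holds — but these are the same verifications as in \cite{CasselsIV} and \cite{SD}, and the everywhere-local-solubility of $C$ together with the product formula~(\ref{prodform}) is what makes the sum independent of the choice of $f$ (changing $f$ by an element of $L^\mult$ or of $\Image(w_1)$ changes each local term, but the total is unchanged by~(\ref{prodform}) applied to $w_1(\eta)$).

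Finally I would note that, strictly, we have assumed a representative $\eta \in S^{(p)}(E/K)$ exists and is fixed; since $py = 0$ and $y \in \Sha(E/K)$ lifts to $H^1(K,E[p])$ by the Kummer sequence, and since the resulting element can be taken in the Selmer group (its restrictions are everywhere locally in the image of $E(K_v)/pE(K_v)$ because $y$ maps to $0$ locally), this is harmless; the value $\langle x,y\rangle$ depends only on $x$ and $y$, which follows from the analogous independence statements in the Weil pairing definition together with the product formula.
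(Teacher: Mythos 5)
Your proposal is correct and follows essentially the same route as the paper's proof: you match the formula against the Weil pairing definition by using the $p$-covering $\pi : C_1 \to C$ with $\pi^* f = g^p$ to identify $f(P_v)$ modulo $p$-th powers with $w_{1,v}$ of the local twisting class (this is exactly the content of the paper's Lemma~\ref{lem:covers}), and you handle well-definedness via the scaling of Lemma~\ref{lem:scale-f}, the isotropy statement from Tate local duality, and the product formula~(\ref{prodform}).
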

\begin{proof}
  We start by checking that~(\ref{def:ctp}) is well-defined as a
  function of $x$ and $\eta$. Lemmas~\ref{lem:SS}
  and~\ref{lem:scale-f} show that $f(P_v)$ is in the image of
  $w_{1,v}$, and so is a valid argument for $[~,~]_v$.  It is shown in
  \cite[Theorem 2.3]{Schaefer} that evaluating $f$ on degree $0$
  divisors gives an explicit realisation of the connecting map
  $\delta_v : E(K_v)/p E(K_v) \to H^1(K_v,E[p])$. So by Tate local
  duality each of the summands in~(\ref{def:ctp}) is independent of
  the choice of $P_v \in C(K_v)$.  The pairing~(\ref{def:ctp}) is also
  independent of the choice of scaling of $f$ as in
  Lemma~\ref{lem:scale-f}, by the product formula~(\ref{prodform}).

  Next, we check that~(\ref{def:ctp}) agrees with one of the standard
  definitions of the Cassels--Tate pairing.  By the proof of
  Lemma~\ref{lem:scale-f}, there is a $p$-covering $\pi : C_1 \to C$
  defined over $K$, and we may scale $f$ so that $\pi^* f = g^p$ for
  some $g \in L(C_1)$.  Since $C$ is everywhere locally soluble, for
  each place $v$ of $K$ there is a $p$-covering $\pi_v : C_{1,v} \to
  C$ defined over $K_v$ with $C_{1,v}(K_v) \not= \emptyset$.  Now
  $\pi_v : C_{1,v} \to C$ is the twist of $\pi : C_1 \to C$ by some
  $\xi_v \in H^1(K_v,E[p])$.  The ``Weil pairing definition'' of the
  Cassels--Tate pairing (see \cite[Section 6]{CasselsIV},
  \cite[Section 2.2]{CTPPS} or \cite[I, Proposition 6.9]{ADT}) says
  that
\begin{equation}
\label{def:ctp-wp}
  \langle x, y \rangle = \sum_{v \in M_K} ( \xi_v , \Res_v \eta )_v. 
\end{equation}

Let $P_v \in \pi_v( C_{1,v}(K_v))$.  Since $C_{1,v}(K_v)$ is infinite
we may assume that $P_v$ is not a zero or pole of $f$.  By
Lemma~\ref{lem:covers} applied over $K_v$ we have
\begin{equation*}
w_{1,v} (\xi_v) \equiv f(P_v) \mod{(L_v^\mult)^p}.
\end{equation*}
It follows that the pairings~(\ref{def:ctp}) 
and~(\ref{def:ctp-wp}) are the same.
\end{proof}

\begin{lemma}
\label{lem:covers}
Let $C/K$ and $f \in L(C)^\mult$ be as before.
Let $\pi_1 : C_1 \to C$ be a $p$-covering, and 
$\pi_2 : C_2 \to C$ its twist by $\xi \in H^1(K,E[p])$. 
Suppose that for $i=1,2$ we have $\pi_i^*f = \alpha_i g_i^p$ for some 
$\alpha_i \in L^\mult$ and $g_i \in L(C_i)$. Then 
$w_1(\xi) \equiv \alpha_2/\alpha_1 \mod{(L^\mult)^p}$. 
\end{lemma}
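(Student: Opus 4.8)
The plan is to make the $\Kbar$-isomorphism relating $C_1$ and $C_2$ explicit, to observe that the constant of proportionality between $g_1$ and $g_2$ has $p$-th power $\alpha_2/\alpha_1$, and then to measure how far this constant is from being Galois equivariant; the answer will be $w(\xi_\sigma)$, by the description of the Weil pairing via pullback along an isogeny. First I would fix the geometry: $E$ acts on each $C_i$, and $\pi_i\circ\tau_a = \tau_{pa}\circ\pi_i$ for $a\in E$, so for $Q\in E[p]$ the translation $\tau_Q$ on $C_i$ satisfies $\pi_i\circ\tau_Q = \pi_i$, i.e.\ it is a deck transformation of $\pi_i$ (with ${}^\sigma\tau_Q = \tau_{{}^\sigma Q}$). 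By the definition of the twist, fix an $E$-equivariant $\Kbar$-isomorphism $\theta\colon C_2\to C_1$ with $\pi_1\circ\theta = \pi_2$ and ${}^\sigma\theta = \theta\circ\tau_{\xi_\sigma}$, where $(\sigma\mapsto\xi_\sigma)$ is a cocycle representing $\xi$.

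Next I would produce the constant. Applying $\theta^*$ to $\pi_1^* f = \alpha_1 g_1^p$ and using $\pi_2 = \pi_1\circ\theta$ gives $\alpha_2 g_2^p = \pi_2^* f = \alpha_1(\theta^* g_1)^p$ in $\Map(E[p]\setminus\{\zeroE\},\Kbar(C_2))$; hence for each $T$ the rational function $\theta^* g_{1,T}/g_{2,T}$ on $C_2$ has constant $p$-th power $\alpha_{2,T}/\alpha_{1,T}$, so it is itself a constant $c_T\in\Kbar^\mult$. Writing $c = (T\mapsto c_T)\in\Lbar^\mult$ we have $\theta^* g_1 = c\,g_2$ and $c^p = \alpha_2/\alpha_1\in L^\mult$; note that $c$ need not lie in $L^\mult$, precisely because $\theta$ is not defined over $K$.

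Then I would differentiate $\theta^* g_1 = c\,g_2$ by Galois. Since $g_1\in L(C_1)$ and $g_2\in L(C_2)$ are fixed by the conjugation action, applying $\sigma$ and comparing $T$-components yields $({}^\sigma\theta)^* g_{1,T} = \sigma(c)_T\, g_{2,T}$; substituting ${}^\sigma\theta = \theta\circ\tau_{\xi_\sigma}$ and then $\theta^* g_{1,T} = c_T g_{2,T}$ turns the left side into $c_T\,(\tau_{\xi_\sigma}^* g_{2,T}/g_{2,T})\, g_{2,T}$. The factor $\tau_{\xi_\sigma}^* g_{2,T}/g_{2,T}$ is a constant in $\mu_p$, since $\tau_{\xi_\sigma}^*(g_{2,T}^p) = \alpha_{2,T}^{-1}(\pi_2\circ\tau_{\xi_\sigma})^* f_T = \alpha_{2,T}^{-1}\pi_2^* f_T = g_{2,T}^p$. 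The resulting homomorphism $E[p]\to\mu_p(\Lbar)$, $Q\mapsto(T\mapsto\tau_Q^* g_{2,T}/g_{2,T})$, is the map $w$: since $\divv(g_{2,T}) = \pi_2^*\aaa_T$ with $\summ(\aaa_T) = T$ and $\tau_Q$ is a deck transformation of the $p$-covering $\pi_2$, this is exactly the standard description of the Weil pairing $e_p$ via pullback along $\pi_2$ (in the role of $[p]\colon E\to E$; cf.\ \cite[Section~6]{CasselsIV}). Hence $\sigma(c)/c = w(\xi_\sigma)$ for all $\sigma$.

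Finally: $c\in\Lbar^\mult$ satisfies $w(\xi_\sigma) = \sigma(c)/c$ for all $\sigma$ and $c^p = \alpha_2/\alpha_1\in L^\mult$, which is precisely the data computing $w_1$ on the class $\xi$ (take $\gamma = c$ in the definition of $w_1$), so $w_1(\xi)\equiv c^p\equiv\alpha_2/\alpha_1\pmod{(L^\mult)^p}$. I expect the only real obstacle to be bookkeeping rather than anything deep: keeping distinct the ``geometric'' Galois action on the individual functions $g_{i,T}$ from the conjugation action defining $L\subset\Lbar$, and checking that the sign conventions chosen earlier for the twisting cocycle, for the Weil pairing map $w$, and for its pullback description are mutually compatible, so that one obtains $\alpha_2/\alpha_1$ and not its inverse. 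Since every step is geometric, the argument applies verbatim over the completions $K_v$, which is how the lemma is used in the proof of Theorem~\ref{thm:ctp}.
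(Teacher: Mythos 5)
Your proposal is correct and follows essentially the same route as the paper: introduce the $\Kbar$-isomorphism between $C_2$ and $C_1$ over $C$, extract the constant $c$ (the paper's $\gamma$) with $c^p=\alpha_2/\alpha_1$ from $\theta^*g_1=c\,g_2$, and identify $\sigma(c)/c$ with $w(\xi_\sigma)$ via the pullback description of the Weil pairing along the $p$-covering. The paper's version is terser (it applies the deck transformation $\sigma(\psi)\psi^{-1}$ directly to $g_1$ rather than working on $C_2$), but the content and the key identification with $w$ are identical.
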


\begin{proof}
The proof is closely related to that of \cite[Theorem 2.3]{Schaefer}.
There is an isomorphism $\psi$ defined over 
$\Kbar$ making the following diagram commute.
\[ \xymatrix{ 
 C_2 \ar[r]^{\pi_2}  \ar[d]_{\psi} & C \ar@{=}[d] \\
 C_1 \ar[r]^{\pi_1} & C } \]
Then $\xi\in H^1(K,E[p])$ is represented by a cocycle 
$(\sigma \mapsto \xi_{\sigma})$ where $\sigma(\psi)\psi^{-1}$
is translation by $\xi_\sigma \in E[p]$. 
We have $\psi^* g_1 = \gamma g_2$ for some $\gamma \in \Lbar^\mult$. 
By definition of the Weil pairing 
\[ w(\xi_\sigma) g_1 = (\sigma(\psi) \psi^{-1})^* g_1 = 
(\sigma(\gamma)/\gamma) g_1. \]
It follows that $w(\xi_\sigma) = \sigma(\gamma)/\gamma$ and so
 $w_1(\xi) \equiv \gamma^p \equiv \alpha_2/\alpha_1 \mod{(L^\mult)^p}$.
\end{proof}

The formula~(\ref{def:ctp}) is in fact a finite sum. This may be seen
by Tate local duality, and the following lemma.  We write $\OO_v
\subset L_v$ for the product of valuation rings of the constituent
fields, and $l_v$ for the product of residue fields.

\begin{lemma}
\label{lem:finite}
Let $C/K$ and $f \in L(C)^\mult$ be as in Theorem~\ref{thm:ctp}.
\begin{enumerate}
\item If $v \nmid p \infty$ is a prime of good reduction for $E$ then
\[ \Image(w_{1,v} \circ \delta_v) = \Image (w_{1,v}) \cap \OO_v^\mult/(\OO_v^\mult)^p. \]
\item If $v \nmid p \infty$ is a prime of good reduction for $C$, and
  $f$ reduces mod $v$ to $\widetilde{f} \in l_v(\widetilde{C})^\mult$
  then
  \[ f(P_v) \in \Image (w_{1,v}) \cap \OO_v^\mult/(\OO_v^\mult)^p \]
  for all $P_v \in C(K_v)$ avoiding the zeros and poles of the $f_T$.
\end{enumerate}
\end{lemma}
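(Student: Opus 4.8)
The plan is to treat the two parts in turn, using in each case that good reduction forces the relevant objects (Galois cohomology, divisors, rational functions) to be unramified at $v$, so that the maps $w_{1,v}$ and $\delta_v$ interact well with the integral structures $\OO_v$ and $l_v$.

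For part (i), I would start from the standard fact that for $v \nmid p\infty$ a prime of good reduction for $E$, the subgroup $E(K_v)/pE(K_v)$ corresponds under $\delta_v$ to the unramified part $H^1_{\mathrm{nr}}(K_v,E[p]) = H^1(l_v, E[p](\Lbar))$ of $H^1(K_v,E[p])$; this is the local condition defining the Selmer group at good primes away from $p$. So it suffices to show that $w_{1,v}$ carries this unramified subgroup isomorphically onto $\Image(w_{1,v}) \cap \OO_v^\mult/(\OO_v^\mult)^p$. One inclusion: if $\xi$ is unramified, then in the recipe $w(\xi_\sigma) = \sigma(\gamma)/\gamma$ one may take $\gamma$ to be a unit at (the primes of $L$ above) $v$, since $L/K$ is unramified at $v$ and $\xi$ is unramified; hence $\alpha = \gamma^p \in \OO_v^\mult$. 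For the reverse inclusion, if $w_{1,v}(\xi) = \alpha$ with $\alpha \in \OO_v^\mult$, I would argue that $\xi$ must be unramified: the ramification of $\xi$ is detected by the image of $w_{1,v}(\xi)$ in $L_v^\mult/(L_v^\mult)^p$ under the valuation maps (using that $w_{1,v}$ is injective, which follows by the same argument as Lemma \ref{lem:SS} applied over $K_v$, or by a diagram chase), and $\alpha$ being a unit kills these. Here one uses that $L/K$ is unramified at $v$ so that $\mu_p \subset L_v$ behaves well and the valuations on the factors of $L_v$ are the relevant ones.

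For part (ii), the input is that $C$ has good reduction at $v$ and $f$ reduces to a well-defined nonzero $\widetilde f \in l_v(\widetilde C)^\mult$. First I would note that a point $P_v \in C(K_v)$ reduces to a point $\widetilde P_v \in \widetilde C(l_v)$, and if $P_v$ avoids the zeros and poles of the $f_T$ then — after possibly noting that it also avoids those of $\widetilde f$, which may require a mild genericity remark or an appeal to the fact that $\widetilde C(l_v)$ is large enough — the value $f(P_v)$ reduces to $\widetilde f(\widetilde P_v) \in l_v^\mult$, a nonzero element of the residue field; hence $f(P_v) \in \OO_v^\mult/(\OO_v^\mult)^p$. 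The remaining point is that $f(P_v)$ lies in $\Image(w_{1,v})$: this is exactly what was observed in the proof of Theorem \ref{thm:ctp}, namely that $f(P_v) = w_{1,v}(\xi_v)$ for the appropriate class $\xi_v$ coming from a $p$-covering of $C$ defined over $K_v$ (using everywhere local solubility via Tate's lemma, which holds at every place). So $f(P_v) \in \Image(w_{1,v}) \cap \OO_v^\mult/(\OO_v^\mult)^p$ as claimed.

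I expect the main obstacle to be part (i), specifically the careful identification of the unramified subgroup under $w_{1,v}$ in a way that is uniform over the (possibly several, possibly ramified-looking but actually unramified) factors of the étale algebra $L_v$. One must be careful that "good reduction of $E$ at $v$" indeed forces $L/K$ (the field of definition of the $p$-torsion, minus $\zeroE$) to be unramified at $v$ — this is the criterion of Néron–Ogg–Shafarevich — and then that the valuation-theoretic detection of ramification of $\xi$ is compatible with the description of $\Image(w_{1,v})$ in Lemma \ref{lem:SS}. The genericity issue in part (ii) (ensuring $\widetilde P_v$ avoids the zeros/poles of $\widetilde f$, or rather handling $P_v$ that reduce onto them) is a minor technical wrinkle that can be absorbed by Hensel's lemma and the size of $\widetilde C(l_v)$, or sidestepped by the observation that the statement only needs to hold for $P_v$ avoiding zeros and poles of $f_T$ and that the conclusion is about membership in a group that does not see the reduction subtlety once $f(P_v)$ is known to be a $v$-adic unit.
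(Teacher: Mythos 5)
Your part (i) is essentially the paper's argument: both identify $\Image(\delta_v)$ with the unramified subgroup of $H^1(K_v,E[p])$ and then match unramified classes with unit classes. The paper packages your two inclusions as the single statement that $\OO_v^\mult/(\OO_v^\mult)^p$ is the kernel of $(L\otimes_K K_v)^\mult/\{p\text{th powers}\}\to(L\otimes_K K_v^{\nr})^\mult/\{p\text{th powers}\}$ (which, incidentally, needs only $v\nmid p$, not the N\'eron--Ogg--Shafarevich input you invoke).

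In part (ii) there is a genuine gap. The claim must hold for \emph{every} $P_v\in C(K_v)$ avoiding the zeros and poles of the $f_T$, including points whose reduction $\widetilde{P}_v$ lands on a zero or pole of some $\widetilde{f}_T$; for such a point $f_T(P_v)$ is genuinely not a $v$-adic unit, so the direct argument ``$f(P_v)$ reduces to $\widetilde{f}(\widetilde{P}_v)\in l_v^\mult$'' fails, and your proposed sidestep (``once $f(P_v)$ is known to be a $v$-adic unit'') assumes exactly what is to be proved. The missing idea is a reduction to a single point: by the proof of Theorem~\ref{thm:ctp} (Tate local duality), changing $P_v$ changes $f(P_v)$ by an element of $\Image(w_{1,v}\circ\delta_v)$, which by part (i) lies in $\OO_v^\mult/(\OO_v^\mult)^p$; hence it suffices to exhibit \emph{one} $P_v$ for which $f(P_v)$ is a unit. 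That one point is produced exactly as you suggest: choose $\widetilde{P}_v\in\widetilde{C}(k_v)$ avoiding the zeros and poles of the $\widetilde{f}_T$ (passing to an unramified extension if $k_v$ is too small, which is harmless because membership in $\OO_v^\mult/(\OO_v^\mult)^p$ is detected by valuations, and these are unchanged under unramified extension) and lift by Hensel's lemma. With that reduction inserted, your argument becomes the paper's proof; the identification of $f(P_v)$ with $w_{1,v}(\xi_v)$ via the $p$-covering, which you use for the $\Image(w_{1,v})$ membership, is also how the paper handles that half.
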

\begin{proof}
  (i) It is well known that $\Image(\delta_v)$ is the unramified
  subgroup of $H^1(K_v,E[p])$. See for example \cite[Proposition
  3.2]{SS}, where this is proved under slightly weaker assumptions on
  $v$.  We then use that $\OO_v^\mult/(\OO_v^\mult)^p$ is the kernel
  of the natural map
  \[ (L \otimes_K K_v)^\mult / \{ \text{$p$th powers} \} \to (L
  \otimes_K K^\nr_v)^\mult / \{ \text{$p$th powers} \}.  \] (ii) By
  (i) and the proof of Theorem~\ref{thm:ctp} it suffices to prove this
  for just one choice of $P_v$. If the residue field $k_v$ of $K_v$ is
  sufficiently large then there exists $\widetilde{P}_v \in
  \widetilde{C}(k_v)$ avoiding the zeros and poles of the
  $\widetilde{f}_T$.  We then use Hensel's lemma to lift
  $\widetilde{P}_v$ to $P_v \in C(K_v)$, and see that $f(P_v)$ is a
  unit. If $k_v$ is too small then we rectify this by making an
  unramified extension.
\end{proof}

We would like to use Theorem~\ref{thm:ctp} to compute the
Cassels--Tate pairing. There are essentially two problems.
\begin{itemize}
\item Computing the local pairing $[~,~]_v$. This is the
subject of Section~\ref{sec:loc}.
\item Computing the rational functions $f_T$. In Section~\ref{sec:glob}
we describe a method for doing this in the case where $p=3$
and $C$ is a plane cubic.
\end{itemize}

In the case $p=2$, the pairing $[~,~]_v$ can be written as a product of
Hilbert norm residue symbols. This is used implicitly in Cassels'
paper \cite{Cassels98}, and a detailed proof is given in \cite{Yoga}.
Our generalisation to the case $p=3$ is necessarily more complicated
since $[~,~]_v$ is symmetric, whereas the Hilbert norm residue symbol
is skew-symmetric.

Cassels' method for computing the $f_T$ requires us to solve conics
over the field of definition of a $2$-torsion point on $E$. The conics
arise by a geometric construction that seems very special to the case
$p=2$. Nonetheless, we have found a practical method for reducing the
problem in the case $p=3$ to that of ``trivialising a matrix algebra''
over the field of definition of a $3$-torsion point on $E$.

\section{Computing the local pairing}
\label{sec:loc}

We keep the notation of Section~\ref{sec:CTpair}, up to and including
Lemma~\ref{lem:SS}, but now take $K$ a $p$-adic field. In this section,
we compute the local pairing $[~,~]_K$ on the image of $w_1$. Since
$[~,~]_K$ is symmetric, and $p$ is odd, it is equivalent to compute
the quadratic form $\varphi_K : \Image(w_1) \to \recip{p}\Z/\Z$
satisfying
\begin{equation}
\label{sym-quad}
 [\alpha,\beta]_K = \varphi_K(\alpha\beta) - 
     \varphi_K(\alpha) - \varphi_K(\beta) 
\end{equation}
for all $\alpha,\beta \in \Image(w_1)$.

We fix $\zeta_p \in \Kbar$ a primitive $p$th root of unity.

Let $T_1, \ldots, T_m \in E[p] \setminus \{ \zeroE \}$ be
representatives for the $\Gal(\Kbar/K)$-orbits.  Then $L = L_1 \times
\ldots \times L_m$ where $L_j = K(T_j) \subset \Kbar$.  We write
$\{~,~\}_j$ for the Hilbert norm residue symbol on
$L_j(\zeta_p)^\mult/(L_j(\zeta_p)^{\mult})^{p}$. This takes values in
$\mu_p$.

Let $\iota : L' \otimes_K K(\zeta_p) \isom L \otimes_K K(\zeta_p)$ be
the isomorphism induced by the bijection
\begin{equation}
\label{lambda:bij}
\begin{aligned}
E[p] \setminus \{ \zeroE \} & \leftrightarrow \Lambda \\
T & \mapsto \{ S \in E[p] : e_p(S,T) = \zeta_p \}
\end{aligned}
\end{equation}
This depends on the choice of $\zeta_p$.

Let $\Ind_{\zeta_p} : \mu_p \isom \recip{p}\Z/\Z$ be the isomorphism
that maps $\zeta_p \mapsto \recip{p}$.

\begin{theorem}
\label{localpairing}
Assume $p$ is an odd prime. Let $\alpha \in L$ represent an element in
the image of $w_1 : H^1(K,E[p]) \to L^\mult/(L^\mult)^p$.  Then we may
associate to $\alpha$ an element $\alpha' \in L'$ such that for each
$1 \le j \le m$,
\[ [L_j(\zeta_p):K] \varphi_K(\alpha) = \left\{ \begin{array}{ll}
    \Ind_{\zeta_p} \{\alpha(T_j), \iota (\alpha')(T_j) \}_j &
    \text{ if } \iota (\alpha')(T_j) \not=0, \\
    0 & \text{ otherwise.} \end{array} \right. \] If $p=3$ then we may
take
\[ \alpha' = \Tr_{M/L'}(\alpha) - 3 N_{M/L'}(\alpha)^{1/3} \] where
the cube root is chosen as specified in Proposition~\ref{cuberoot}(i).
\end{theorem}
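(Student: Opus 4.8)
The plan is to reduce the statement to a computation about the local Tate pairing in terms of cup products and the Weil pairing, and then identify that cup product with a sum of Hilbert symbols on the constituent fields $L_j(\zeta_p)$. More precisely, recall that for $\xi \in H^1(K,E[p])$ the pairing $(\xi,\xi)_K$ is obtained by cupping the Weil-pairing image of $\xi \otimes \xi$ and applying $\inv_K$; since $[~,~]_K$ is symmetric and $p$ is odd, this is captured by the quadratic refinement $\varphi_K$. First I would fix $T_j$ and work on the constituent field $L_j = K(T_j)$, where (after base change) $T_j$ becomes rational; on $L_j(\zeta_p)$ the Weil pairing identifies a complement of $\langle T_j\rangle$ in $E[p]$ with $\mu_p$ via $S \mapsto e_p(S,T_j)$, which is exactly the bijection \eqref{lambda:bij} defining $\iota$. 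The key point is that $w_1(\eta)$ evaluated at $T_j$ records the ``$T_j$-component'' of the cocycle, while $\iota(\alpha')(T_j)$ records a complementary ``$\Lambda$-component''; pairing these two via the Hilbert symbol $\{~,~\}_j$ reconstructs the local Tate pairing restricted to the one-dimensional pieces indexed by the Galois orbit of $T_j$. The factor $[L_j(\zeta_p):K]$ appears because passing from $K$ to $L_j(\zeta_p)$ multiplies local invariants by the degree, and summing over the $m$ orbits (each contributing $[L_j:K]$ terms Galois-conjugate to the $j$-th) reassembles $\varphi_K(\alpha)$ over $K$; the case $\iota(\alpha')(T_j)=0$ is where the complementary component is a $p$th power, forcing the local contribution to vanish.

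For the identification of $\alpha'$ I would argue as follows. By Lemma~\ref{lem:SS}, elements $\alpha \in \Image(w_1)$ satisfy $N_{M/L'}(\alpha) \equiv 1$ modulo $p$th powers and $\sigma_\nu(\alpha) \equiv \alpha^\nu$; these constraints pin down, up to $p$th powers, the element of $L'^\mult$ dual to $\alpha$ under the bijection \eqref{lambda:bij}. The element we want, call it $\alpha' \in L'$, is characterized (up to $p$th powers) by the property that under $\iota$ its image in $L \otimes_K K(\zeta_p)$ is the ``Weil dual'' of $\alpha$; concretely, writing $\gamma \in \Lbar^\mult$ with $w(\xi_\sigma) = \sigma(\gamma)/\gamma$, the function $\alpha' = \gamma'^p$ for an appropriate $\gamma' \in \Lbar'^\mult$ built from $\gamma$ and the line structure. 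To get the explicit formula when $p=3$: each line $\lin \in \Lambda$ contains exactly $3$ points, and a point $T \ne \zeroE$ lies on exactly $(p^2-1)/(p-1) = p+1$ lines, but for $p=3$ the combinatorics of $E[3]\setminus\{\zeroE\}$ (eight points, twelve lines, four lines through each point) let us express the element of $L'$ attached to $\alpha$ in terms of $M$-data by a trace, and then correct by the norm. I would verify that $\Tr_{M/L'}(\alpha) - 3 N_{M/L'}(\alpha)^{1/3}$ satisfies the defining characterization of $\alpha'$ by a direct check over $\Kbar$, splitting into the products of evaluations indexed by $(T,\lin)$ with $T \in \lin$; the subtraction of $3\,N_{M/L'}(\alpha)^{1/3}$ is precisely what is needed to kill the ``diagonal'' contribution and leave the correct class modulo cubes, with the cube root normalized so that things are consistent (as specified in Proposition~\ref{cuberoot}(i)).

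The steps in order: (1) fix an orbit representative $T_j$ and base-change to $L_j(\zeta_p)$, where $E[p]$ splits as $\langle T_j \rangle \oplus (\text{complement} \cong \mu_p)$ compatibly with the Weil pairing; (2) write the local Tate pairing $(\Res\,\xi, \Res\,\xi)_{L_j(\zeta_p)}$ as a cup product and use the splitting to express it as $\{\alpha(T_j), \beta\}_j$ where $\beta \in L_j(\zeta_p)^\mult$ is the component of $w_1(\eta)$ along the complement; (3) identify $\beta$ with $\iota(\alpha')(T_j)$ using the bijection \eqref{lambda:bij}, so $\{~,~\}_j$ takes the stated form; (4) track the degree factor $[L_j(\zeta_p):K]$ coming from corestriction/change of local invariant, and handle the degenerate case $\iota(\alpha')(T_j) = 0$; (5) for $p=3$, compute $\alpha'$ explicitly: show the trace picks out the right product of evaluations over $\{(T,\lin): T \in \lin\}$ and that subtracting $3 N_{M/L'}(\alpha)^{1/3}$ gives the correct class modulo cubes, invoking Proposition~\ref{cuberoot}(i) for the normalization.

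The main obstacle will be step (2)--(3): making the passage from the abstract cup-product description of the local pairing to the concrete Hilbert-symbol formula, in particular keeping track of exactly which element $\beta$ of $L_j(\zeta_p)^\mult$ is dual to $\alpha(T_j)$ and verifying it is $\iota(\alpha')(T_j)$ for the $\alpha'$ we construct. This requires a careful unwinding of the Weil pairing definition of $(~,~)_K$, compatibility of the connecting maps with base change to $L_j(\zeta_p)$, and the fact that the Hilbert symbol on $L_j(\zeta_p)$ computes $\inv_{L_j(\zeta_p)}$ of the relevant cyclic algebra --- together with the bookkeeping that the $m$ orbit contributions correctly reassemble $\varphi_K(\alpha)$ rather than $p\varphi_K(\alpha)$ or some conjugate sum. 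The $p=3$ formula for $\alpha'$ is then a finite, if somewhat delicate, algebraic identity that I expect to fall out once the structure is in place, modulo getting the cube-root normalization in Proposition~\ref{cuberoot}(i) exactly right.
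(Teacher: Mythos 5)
Your reduction to the base-changed situation is the same as the paper's first step: Lemma~\ref{field extn} gives $\varphi_F(\alpha)=[F:K]\varphi_K(\alpha)$, and one applies it with $F=L_j(\zeta_p)$ separately for each $j$ --- note there is no summation or corestriction over the $m$ orbits to ``reassemble'' $\varphi_K(\alpha)$; each $j$ yields its own identity for $[L_j(\zeta_p):K]\varphi_K(\alpha)$, and one then picks a $j$ with degree coprime to $p$. The worry in your last paragraph about orbit contributions adding up to $p\varphi_K(\alpha)$ is therefore a non-issue, but it signals that your step (4) is set up incorrectly.

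The genuine gap is in steps (2)--(3). Over $L_j(\zeta_p)$ the module $E[p]$ does \emph{not} split as $\langle T_j\rangle\oplus(\text{complement}\cong\mu_p)$: since $e_p(T_j,T_j)=1$ we have $\langle T_j\rangle^\perp=\langle T_j\rangle$, so the Weil pairing only gives the filtration $0\to\Z/p\to E[p]\to\mu_p\to 0$, with no canonical complement. Consequently there is no well-defined ``complementary $\Lambda$-component'' $\beta$ of $w_1(\xi)$ to feed into the Hilbert symbol, and $\alpha'$ cannot be characterised as a component of $\alpha$ at all: it depends on the finer $2$-cocycle data $\rho=\partial\gamma$ representing $w_2(\xi)$, which is exactly why the $p=3$ formula involves a \emph{specific} cube root of $N_{M/L'}(\alpha)$ (pinned down in Proposition~\ref{cuberoot}) rather than a class modulo cubes. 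The paper's route around this is the obstruction algebra: $\varphi_K(w_1(\xi))=\inv_K(A_\rho)$ by Proposition~\ref{csa}, and inside $A_\rho$ the indicator functions $\delta_T$ of the point and $\delta_{\linspec}$ of the line $\linspec=\{S:e_p(S,T)=\zeta_p\}$ --- a Galois-stable \emph{set}, requiring no splitting --- satisfy $\delta_{\linspec}\delta_T=\zeta_p\delta_T\delta_{\linspec}$, $\delta_T^p=\alpha(T)\delta_\zeroE$ and $\delta_{\linspec}^p=\alpha'(\linspec)\delta_\zeroE$, exhibiting $A_\rho$ as the cyclic algebra $(\chi_{\alpha(T)},\alpha'(\linspec))$ whose invariant is the stated Hilbert symbol (Theorem~\ref{mainthm2}, via Lemma~\ref{scalar matrix}); the $p=3$ expression for $\delta_{\linspec}^3$ is then the explicit computation of Proposition~\ref{p=3}. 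Your proposal contains no substitute for this mechanism. Finally, your reading of the degenerate case is wrong: $\iota(\alpha')(T_j)=0$ means $\alpha'(\linspec)$ is literally zero, so $\delta_{\linspec}$ is a zero divisor and $A_\rho\cong\Mat_p(K)$; it is not the condition that the dual component is a $p$th power (in which case the Hilbert symbol is defined and trivial, and the formula applies as stated).
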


\begin{Remark}
Theorem~\ref{localpairing} can be used to compute $\varphi_K(\alpha)$ 
in all cases, since the degree 
$[L_j(\zeta_p):K]$ is coprime to $p$ for at least one $j$.
\end{Remark}

The proof of Theorem~\ref{localpairing} uses several constructions
from \cite{descsum}, the most important of which is described in
Proposition~\ref{csa} below.

Following \cite{descsum}, let $R=\Map_K(E[p],\Kbar)$ be the \'{e}tale
algebra of $E[p]$, and let $\overline{R}=\Map(E[p],\Kbar)=R\otimes_K
\Kbar$.  Writing $E[p]=\{\zeroE\}\cup (E[p]\setminus\{\zeroE\})$ there
are decompositions $R=K\times L$ and $\Rbar=\Kbar\times \Lbar$.  The
Weil pairing induces a map $w:E[p]\rightarrow \mu_p(\Rbar)$ given by
$S\mapsto (T\mapsto e_p(S,T))$.  There is then an exact sequence
\[  0 \ra E[p] \stackrel{w}{\ra} \Rbar^\mult \stackrel{\partial}{\ra}
            \partial \Rbar^\mult  \ra  0 \]
where the map 
\[\partial:\overline{R}^{\mult}\rightarrow (\overline{R}\otimes_{\Kbar} \overline{R})^{\mult}=\Map(E[p]\times E[p],\Kbar^{\mult})\]
is defined by $\partial\beta (S,T)=\beta(S)\beta(T)/\beta(S+T)$ for
all $S,T\in E[p]$.  Taking Galois cohomology gives an injective group
homomorphism
\[w_2:H^1(K,E[p])\rightarrow (R\otimes_K R)^{\mult}/\partial R^{\mult}.\] 

Let $\gamma\in \Lbar^\mult$ be as described in the definition of $w_1$
(see Section~\ref{sec:CTpair}). We extend $\gamma$ to an element of
$\Rbar^\mult$ by setting $\gamma(\zeroE)=1$. Then
$\rho=\partial\gamma\in (R\otimes_K R)^{\mult}$ and
$w_2(\xi)=\rho\mod{\partial R^{\mult}}$.  It is convenient to
summarise this situation as follows.
\begin{definition}
\label{compatible}
Let $\xi\in H^1(K,E[p])$. We call $\alpha\in L^{\mult}$, $\rho\in
(R\otimes_K R)^{\mult}$ {\em compatible} representatives for
$w_1(\xi)$ and $w_2(\xi)$ if there exist a cocycle $(\sigma\mapsto
\xi_{\sigma})\in Z^1(K,E[p])$ representing $\xi$ and $\gamma\in
\Rbar^{\mult}$ such that all of the following conditions are
satisfied.
\begin{enumerate}
\item For all $\sigma\in G_K$ and all $T\in E[p]$, we have
  $e_p(\xi_{\sigma},T)=(\sigma\gamma/\gamma)(T)$;
\item $\gamma(\zeroE)=1$ and for all $T\in E[p]\setminus\{\zeroE\}$,
  $\gamma(T)^p=\alpha(T)$;
\item $\rho=\partial\gamma$.
\end{enumerate}
\end{definition}

\begin{definition}
  Let $\rho$ represent an element in the image of $w_2$. Following
  \cite{descsum}, we define a new multiplication $*_{\rho}$ on the
  $\Kbar$-vector space $\Rbar$ as follows. For all $f,g\in \Rbar$ and
  for all $T\in E[p]$,
\begin{equation*}
  (f*_{\rho}g)(T)=\sum_{T_1+T_2=T}{e_p^{1/2}(T_1,T_2)\rho(T_1,T_2)f(T_1)g(T_2)}
\end{equation*}
where $e_p^{1/2}(T_1,T_2)\in \mu_p$ is the square root of the Weil
pairing. For each $T\in E[p]$, let $\delta_T\in\overline{R}$ be the
indicator function \[\delta_T(S)=\left\{\begin{array}{ll}1 & \textrm{
      if }\ S=T,\\ 0 & \textrm{ if }\ S\neq T.\end{array}\right.\] The
indicator function $\delta_\zeroE$ is the identity for the
multiplication $*_{\rho}$. For all $S,T\in E[p]$, we have
\[\delta_S*_{\rho}\delta_T=e_p^{1/2}(S,T)\rho(S,T)\delta_{S+T}\] and
therefore
\begin{equation}
  \label{conjugation}
  \delta_S*_{\rho}\delta_T=e_p(S,T)\delta_T*_{\rho}\delta_S
\end{equation}
since $\rho$ is symmetric and $e_p$ is skew-symmetric.
\end{definition}

\begin{lemma}
\label{powers}
Let $\xi\in H^1(K,E[p])$ and let $\alpha\in L^{\mult}$, $\rho\in
(R\otimes_K R)^{\mult}$ be compatible representatives for $w_1(\xi)$
and $w_2(\xi)$ respectively. Then for all $T\in E[p]\setminus
\{\zeroE\}$, we have
\[\delta_T^p=\underbrace{\delta_{T}*_{\rho}\delta_T*_{\rho}\dots *_{\rho}\delta_T}_{p \textrm{ times}} =\alpha(T)\delta_{\zeroE}.\]
Therefore, $\delta_T$ is invertible with respect 
to the multiplication $*_{\rho}$.
\end{lemma}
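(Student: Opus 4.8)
The plan is to compute the $k$-fold $*_\rho$-power of $\delta_T$ by induction on $k$, using repeatedly the identity $\delta_S *_\rho \delta_T = e_p^{1/2}(S,T)\,\rho(S,T)\,\delta_{S+T}$ recorded just before the lemma, and then to evaluate the scalar that appears at $k = p$ by exploiting the hypothesis that $\rho = \partial\gamma$ with $\gamma$ as in Definition~\ref{compatible}.

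First I would prove by induction on $k \ge 1$ that $\delta_T *_\rho \cdots *_\rho \delta_T$ ($k$ factors) equals $c_k\,\delta_{kT}$, where $c_1 = 1$ and $c_{k+1} = e_p^{1/2}(kT,T)\,\rho(kT,T)\,c_k$; here associativity of $*_\rho$, which makes the $k$-fold product well-defined, is part of the construction of $(\Rbar, *_\rho)$ recalled from \cite{descsum}. Unwinding the recursion gives $c_k = \prod_{i=1}^{k-1} e_p^{1/2}(iT,T)\,\rho(iT,T)$. Next I would note that the half-Weil-pairing factors are all trivial: bilinearity of $e_p$ gives $e_p(iT,T) = e_p(T,T)^i = 1$, and since $p$ is odd the only square root of $1$ in $\mu_p$ is $1$, so $e_p^{1/2}(iT,T) = 1$ for every $i$. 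Taking $k = p$ and using $pT = \zeroE$, we obtain $\delta_T^p = c_p\,\delta_\zeroE$ with $c_p = \prod_{i=1}^{p-1} \rho(iT,T)$.

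It remains to identify $c_p$ with $\alpha(T)$. Substituting $\rho(iT,T) = \partial\gamma(iT,T) = \gamma(iT)\gamma(T)/\gamma((i+1)T)$ and telescoping,
\[ c_p = \prod_{i=1}^{p-1}\frac{\gamma(iT)\,\gamma(T)}{\gamma((i+1)T)} = \gamma(T)^{p-1}\cdot\frac{\gamma(T)}{\gamma(pT)} = \gamma(T)^{p-1}\cdot\frac{\gamma(T)}{\gamma(\zeroE)} = \gamma(T)^{p} = \alpha(T), \]
where the last two equalities use $\gamma(\zeroE) = 1$ and $\gamma(T)^p = \alpha(T)$ from Definition~\ref{compatible}(ii). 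Hence $\delta_T^p = \alpha(T)\,\delta_\zeroE$, and since $\alpha(T) \in \Kbar^\mult$ this exhibits $\delta_T$ as a unit, with inverse $\alpha(T)^{-1}\,(\delta_T *_\rho \cdots *_\rho \delta_T)$ ($p-1$ factors).

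I do not expect a real obstacle here: the argument is a short induction followed by a telescoping product. The points that need care are the vanishing of the $e_p^{1/2}$-factors (which is exactly where oddness of $p$ enters) and keeping track of the extra $\gamma(T)$ contributed by each factor $\rho(iT,T)$, so that the telescoping produces $\gamma(T)^p$ rather than just $\gamma(T)$.
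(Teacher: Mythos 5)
Your proof is correct and follows essentially the same route as the paper: iterate the product rule for $\delta_{iT}*_\rho\delta_T$, observe that the factors $e_p^{1/2}(iT,T)$ are trivial, and telescope $\prod_{i=1}^{p-1}\rho(iT,T)=\gamma(T)^p/\gamma(pT)=\alpha(T)$ using Definition~\ref{compatible}. The only difference is that you spell out the vanishing of the half-Weil-pairing factors, which the paper's computation passes over silently.
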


\begin{proof}
  Let $\gamma\in\Rbar^{\mult}$ be as in Definition~\ref{compatible}.
  Then
\begin{eqnarray*}
  \underbrace{\delta_{T}*_{\rho}\delta_T*_{\rho}\dots *_{\rho}\delta_T}_{p \textrm{ times}} &=&\prod_{i=1}^{p-1}{\rho(T,iT)}\delta_{\zeroE}
  =\prod_{i=1}^{p-1}{\frac{\gamma(T)\gamma(iT)}{\gamma((i+1)T)}\delta_\zeroE}\\
  &=&\frac{\gamma(T)^p}{\gamma(pT)}\delta_\zeroE=\frac{\alpha(T)}{\gamma(\zeroE)}\delta_\zeroE=\alpha(T)\delta_\zeroE.
\end{eqnarray*}
Since $\alpha\in L^{\mult}$, we have $\alpha(T)\in \Kbar^{\mult}$ and
therefore $\delta_T$ is invertible.
\end{proof}

\begin{definition}
  Denote by $A_{\rho}$ the algebra which is the $K$-vector space $R =
  \Map_K(E[p],\Kbar)$ equipped with the new multiplication $*_{\rho}$.
\end{definition}

\begin{proposition}
\label{csa}
Let $\xi\in H^1(K,E[p])$ and choose $\rho\in(R\otimes_K R)^{\mult}$
such that $w_2(\xi) = \rho \mod{\partial R^{\mult}}$. Then $A_{\rho}$
is a central simple algebra of dimension $p^2$ over $K$ and
$\varphi_K(w_1(\xi))=\inv_K(A_{\rho})$.
\end{proposition}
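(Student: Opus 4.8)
The plan is to prove the two assertions separately: first that $A_\rho$ is a central simple algebra of dimension $p^2$, and then that its Brauer class computes $\varphi_K(w_1(\xi))$. For the first part, I would exploit the fact that the construction is Galois-descent data: over $\Kbar$, the twisted multiplication $*_\rho$ is just a $*_{\partial\gamma}$ for $\gamma \in \Rbar^\mult$, and conjugating by $\gamma$ (that is, the $\Kbar$-linear map $f \mapsto \gamma f$) identifies $(\Rbar, *_{\partial\gamma})$ with $(\Rbar, *_{\text{trivial}})$, the algebra attached to the trivial cocycle. So it suffices to identify $(\Rbar, *_1)$ — the twisted group algebra of $E[p]$ with cocycle $e_p^{1/2}$ — with $\Mat_p(\Kbar)$. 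This is the standard fact that the Heisenberg-type twisted group algebra of $(\Z/p)^2$ with a nondegenerate alternating pairing is a matrix algebra; concretely, the relation $\delta_S *_\rho \delta_T = e_p(S,T)\, \delta_T *_\rho \delta_S$ from~(\ref{conjugation}), together with Lemma~\ref{powers} giving $\delta_T^p \in \Kbar^\mult \delta_\zeroE$, exhibits generators satisfying the defining relations of a $p^2$-dimensional central simple algebra, which over an algebraically closed field must be $\Mat_p$. Hence $A_\rho \otimes_K \Kbar \cong \Mat_p(\Kbar)$, so $A_\rho$ is central simple of dimension $p^2$ over $K$. (One should also check $A_\rho$ depends on $\rho$ only through $\rho \bmod \partial R^\mult$ up to isomorphism: replacing $\rho$ by $\rho \cdot \partial\beta$ for $\beta \in R^\mult$ is again conjugation by $\beta$, now over $K$, so gives an isomorphic $K$-algebra.)

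For the second part, the identity $\varphi_K(w_1(\xi)) = \inv_K(A_\rho)$ should be proved by comparing two descriptions of the local Tate pairing. The cup-product pairing~(\ref{tatepairing}) on $H^1(K,E[p]) \times H^1(K,E[p])$ uses the Weil pairing $E[p] \times E[p] \to \mu_p$ as coefficient map, and $\varphi_K$ is the quadratic refinement~(\ref{sym-quad}) of $[~,~]_K$ restricted (via $w_1$) to the image. On the other hand, the class in $\Br(K)$ of the twisted algebra $A_\rho$ can be computed as a cup product: from the exact sequence $0 \to E[p] \xrightarrow{w} \Rbar^\mult \xrightarrow{\partial} \partial\Rbar^\mult \to 0$ and the description of $w_2$, the obstruction to untwisting $*_\rho$ over $K$ — i.e.\ the class $[A_\rho] \in \Br(K)$ — is the image of $w_2(\xi)$ under a connecting map, and one checks it equals $\xi \cup \xi$ computed with the pairing $E[p] \times E[p] \to \mu_p$ obtained from $e_p^{1/2}$ (the square-root appearing in the definition of $*_\rho$ is precisely what makes the diagonal value land in the right place, using that $p$ is odd so $e_p^{1/2}$ is a well-defined map of Galois modules with $(e_p^{1/2})^2 = e_p$). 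Unwinding: the self-cup-product of $\xi$ under $e_p$ is $[\alpha,\beta]_K$-type data, and its quadratic refinement under $e_p^{1/2}$ is $\varphi_K$; matching this with $\inv_K$ of the cup-product algebra $A_\rho$ gives the claim. Concretely I would verify it on a cocycle: with $\gamma$ as in Definition~\ref{compatible}, $\rho = \partial\gamma$, and the obstruction to finding a $K$-rational trivialisation of $A_\rho$ is measured by how $\gamma$ fails to be Galois-invariant, i.e.\ by the cocycle $\sigma \mapsto \xi_\sigma$ fed through $w$ — this is exactly the recipe for $w_1(\xi)$, and $\varphi_K$ applied to it is by definition the value of the refined pairing.

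I expect the main obstacle to be the careful bookkeeping in the second part: getting the square-root of the Weil pairing, the "Heisenberg" cocycle $e_p^{1/2}$, and the connecting homomorphisms to line up so that $\inv_K(A_\rho)$ literally equals $\varphi_K(w_1(\xi))$ rather than some sign- or normalisation-twisted variant. In particular one must confirm that the quadratic form $\varphi_K$ defined abstractly by~(\ref{sym-quad}) agrees on the nose (not just up to the associated bilinear form) with the Brauer-class function $\xi \mapsto \inv_K(A_\rho)$; this requires knowing that $\xi \mapsto \inv_K(A_\rho)$ is itself quadratic with associated bilinear form $[~,~]_K$, which follows from $A_{\rho_1 \rho_2} \sim A_{\rho_1} \otimes A_{\rho_2} \otimes (\text{cup product of the two classes})$, i.e.\ a Heisenberg-algebra identity $A_{\rho\rho'} \cong A_\rho \otimes_K A_{\rho'} \otimes_K (\text{quaternion-like } e_p\text{-symbol})$, and pinning down the base point $\varphi_K(1) = 0$. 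The first part, by contrast, is essentially a clean descent argument and I expect it to go through routinely once the twisted-group-algebra-is-a-matrix-algebra fact is invoked.
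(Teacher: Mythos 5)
Your proposal is correct in outline, but it takes a genuinely more self-contained route than the paper, whose proof of this proposition is essentially a chain of citations: it introduces the period-index obstruction map $\Ob_K$ of O'Neil and \cite{descsum}, invokes Zarhin's theorem \cite{Z} that $(\xi,\eta)\mapsto\Ob_K(\xi+\eta)-\Ob_K(\xi)-\Ob_K(\eta)$ is the cup-product/Weil-pairing pairing (which, compared with~(\ref{sym-quad}), yields $\varphi_K(w_1(\xi))=\inv_K(\Ob_K(\xi))$), and then quotes Lemmas 4.5, 3.10 and 3.11 of \cite{descsum} to identify the obstruction algebra with $A_\rho$; even the central simplicity of $A_\rho$ is imported from there. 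You propose to reprove these inputs directly, and your ingredients are the right ones: the untwisting $f\mapsto\gamma f$ over $\Kbar$ reducing $A_\rho\otimes_K\Kbar$ to the Heisenberg-type twisted group algebra $\Mat_p(\Kbar)$ is exactly the content of the cited lemmas, and the identification of $[A_\rho]$ with the self-cup-product of $\xi$ under $e_p^{1/2}$ is in substance Zarhin's theorem --- and once you have it, quadraticity of $\xi\mapsto\inv_K(A_\rho)$ with polarisation $[~,~]_K$ follows from bilinearity and symmetry of the cup product, so your proposed tensor-product identity for $A_{\rho\rho'}$ (which for dimension reasons could only be a Brauer equivalence, not an isomorphism) becomes unnecessary. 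What your route buys is independence from the black boxes; what it costs is that you must actually carry out the descent computation, and that is the one place your sketch wobbles: the sentence ``$\varphi_K$ applied to it is by definition the value of the refined pairing'' is circular as written, since $w_1(\xi)$ lives in $L^\mult/(L^\mult)^p$ and relating $\inv_K(A_\rho)$ to it is precisely what is to be proved. The honest version is that the $\Kbar$-trivialisation of $A_\rho$ fails to be Galois-equivariant by conjugation by $\delta_{\xi_\sigma}$ (via~(\ref{conjugation}) and Lemma~\ref{powers}), giving a class in $H^1(K,\mathrm{PGL}_p)$ whose lift has coboundary the $2$-cocycle $(\sigma,\tau)\mapsto e_p^{1/2}(\xi_\sigma,\sigma\xi_\tau)$, i.e.\ the class of $\xi\cup\xi$ taken with coefficient pairing $e_p^{1/2}$; once that computation is written out, your argument closes.
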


\begin{proof}
  Let $\Ob_K : H^1(K,E[p]) \to \Br(K)$ be the period-index obstruction
  map, as defined in \cite{descsum}, \cite{O`Neil}. In \cite{Z}, it is
  shown that the pairing
  \[ H^1(K,E[p]) \times H^1(K,E[p]) \to \Br(K) \] defined by
  cup-product and the Weil pairing, is also given by
  \[ (\xi,\eta) \mapsto \Ob_K(\xi+\eta) - \Ob_K(\xi) - \Ob_K(\eta). \]
  Comparing with~(\ref{sym-quad}), we have
  $\varphi_K(w_1(\xi))=\inv_K(\Ob_K(\xi))$.

  In \cite[Section 4.3]{descsum}, it is shown that $\Ob_K(\xi)$ is
  represented by a certain central simple algebra of dimension $p^2$
  over $K$.  Then Lemmas 4.5, 3.10 and 3.11 of \cite{descsum} show
  that this algebra is $A_\rho$.
\end{proof}

We will make use of the following lemma in our study of the central simple algebra $A_{\rho}$. 

\begin{lemma}
\label{scalar matrix}
Suppose that $A,B\in \GL_n(K)$ satisfy $A^n=B^n=I_n$ and $AB=\zeta_n
BA$ for some primitive $n$th root of unity $\zeta_n\in K$.
\begin{enumerate}
\item If $C\in \Mat_n(K)$ satisfies $CB=\zeta_n BC$, then $C^n$ is a
  scalar matrix.
\item The matrices $A^rB^s$ for $0\leq r,s\leq n-1$ form a basis for
  $\Mat_n(K)$ as a $K$-vector space.
\end{enumerate}
\end{lemma}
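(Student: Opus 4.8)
The plan is to pin down the structure of $B$ first, and then read off both parts. Since $B^n = I_n$ and $\zeta_n \in K$, the polynomial $X^n - 1 = \prod_{i=0}^{n-1}(X-\zeta_n^i)$ splits into distinct linear factors over $K$, so $B$ is diagonalizable over $K$ with eigenvalues among $1, \zeta_n, \dots, \zeta_n^{n-1}$. Writing $V_i \subseteq K^n$ for the $\zeta_n^i$-eigenspace of $B$ (indices in $\Z/n\Z$), the identity $AB = \zeta_n BA$ gives $B(Av) = \zeta_n^{-1}A(Bv) = \zeta_n^{i-1}(Av)$ for $v \in V_i$, so $A V_i \subseteq V_{i-1}$. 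As $A$ is invertible this forces $\dim V_i \le \dim V_{i-1}$ for all $i$, and cycling around $\Z/n\Z$ shows the $V_i$ all have a common dimension $d$; since $\bigoplus_i V_i = K^n$ we get $nd = n$, hence $d=1$ and $B$ has the full set of eigenvalues $1,\zeta_n,\dots,\zeta_n^{n-1}$, each with a one-dimensional eigenspace.

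For (i): the same computation with $C$ in place of $A$ (invertibility of $C$ is not used here) shows that $C B = \zeta_n BC$ implies $C V_i \subseteq V_{i-1}$, so $C^n$ stabilises each line $V_i$ and acts on it as a scalar. Choosing $0 \ne v_i \in V_i$ and writing $C v_i = a_i v_{i-1}$ with $a_i \in K$, we get $C^n v_i = a_i a_{i-1}\cdots a_{i-n+1} v_i = \big(\prod_{j\in\Z/n\Z} a_j\big) v_i$, the product running over all of $\Z/n\Z$ and so the same for every $i$. Hence $C^n$ acts as one fixed scalar on each $V_i$, and as the $V_i$ span $K^n$, $C^n$ is that scalar times $I_n$.

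For (ii): since $\dim_K \Mat_n(K) = n^2$ and there are $n^2$ matrices $A^rB^s$ with $0 \le r,s \le n-1$, it is enough to prove linear independence. Consider the linear operator $\Phi\colon M \mapsto BMB^{-1}$ on $\Mat_n(K)$; it satisfies $\Phi^n = \mathrm{id}$, so (once more because $X^n-1$ has distinct roots in $K$) $\Mat_n(K) = \bigoplus_{j\in\Z/n\Z} W_j$ with $W_j = \{M : BMB^{-1} = \zeta_n^j M\}$. From $AB = \zeta_n BA$ one has $BAB^{-1} = \zeta_n^{-1}A$, hence $\Phi(A^rB^s) = \zeta_n^{-r}A^rB^s$, i.e.\ $A^rB^s \in W_{-r}$. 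Given a relation $\sum_{r,s} c_{rs}A^rB^s = 0$, the directness of the decomposition forces $A^r\big(\sum_s c_{rs}B^s\big) = 0$ for each $r$; since $A^r$ is invertible, $\sum_s c_{rs}B^s = 0$, and because $B$ has $n$ distinct eigenvalues the matrices $I_n, B, \dots, B^{n-1}$ are linearly independent (a nonzero polynomial of degree $<n$ cannot vanish at all $n$ eigenvalues), so all $c_{rs}=0$.

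The only step with genuine content is the structural claim in the first paragraph: that conjugation by $A$ cyclically permutes the eigenlines of $B$, which forces $B$ to have the $n$ distinct eigenvalues $1,\zeta_n,\dots,\zeta_n^{n-1}$ with one-dimensional eigenspaces. Everything after that is bookkeeping — part (ii) via the weight-space decomposition of $\Mat_n(K)$ under conjugation by $B$, part (i) via the cyclic product of the scalars $a_j$. (One could instead change basis so that $B = \Diag(1,\zeta_n,\dots,\zeta_n^{n-1})$ and, using $A^n = I_n$, further conjugate $A$ to the standard shift matrix, reducing both parts to a direct check; the invariant argument above avoids even that normalisation.)
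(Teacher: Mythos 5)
Your proof is correct and follows essentially the same route as the paper's: both establish that conjugation by $A$ cyclically permutes the eigenspaces of $B$, forcing $B$ to have all $n$th roots of unity as simple eigenvalues, deduce (i) from the resulting cyclic-shift structure of $C$, and prove (ii) by an eigenvector argument for conjugation operators on $\Mat_n(K)$. The only cosmetic difference is in (ii), where the paper uses joint eigenvectors for conjugation by both $A$ and $B$, while you use only the $B$-grading together with the linear independence of $I_n, B, \dots, B^{n-1}$.
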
 

\begin{proof}
  (i) 
  Since $B^n=I_n$,
  all eigenvalues of $B$ are $n$th roots of unity. If $v$ is an
  eigenvector with eigenvalue $\lambda$, then $A^{-m}v$ is an
  eigenvector with eigenvalue $\zeta_n^m\lambda$. So $B$ has $n$
  distinct eigenvalues, namely the $n$th roots of unity. Changing
  basis, we may assume that $B=(b_{ij})$ is a diagonal matrix with
  $b_{ii}=\zeta_n^i$. If $C\in \Mat_n(K)$ satisfies $CB=\zeta_n BC$,
  then $C$ is of the form $(c_{ij})$ where $c_{ij}=0$ unless $j\equiv
  i+1\pmod{n}$. Therefore, $C^n=c_{12}c_{23}\dots c_{(n-1)
    n}c_{n1}I_n.$

  (ii) Both $A$ and $B$ act by conjugation on $\Mat_n(K)$.  The matrix
  $A^rB^s$ is an eigenvector with eigenvalue $\zeta^s$ for conjugation
  by $A$, and also an eigenvector with eigenvalue $\zeta^{-r}$ for
  conjugation by $B$. Eigenvectors with distinct eigenvalues are
  linearly independent. Thus, the matrices $A^rB^s$ for $0\leq r,s\leq
  n-1$ are a $K$-basis for $\Mat_n(K)$.
\end{proof}

\begin{proposition}
\label{prop:getscalar}
Let $\rho\in(R\otimes_K R)^{\mult}$ represent an element in the image
of $w_2$. For each $\lin\in \Lambda$, let
$\delta_{\lin}=\sum_{S\in\lin}{\delta_S}$ be the indicator function of
$\lin$. Then \[\delta_{\lin}^p=\underbrace{\delta_{\lin}*_{\rho}\cdots
  *_{\rho}\delta_{\lin}}_{p\ \textrm{times}}\in\Kbar\delta_\zeroE.\]
\end{proposition}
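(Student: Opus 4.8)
The plan is to show $\delta_{\lin}$ commutes with $\delta_T$ up to the fixed scalar $\zeta_p$ whenever $T$ is a point of $E[p]$ lying on the line through $\zeroE$ that is ``parallel'' to $\lin$, and then apply Lemma~\ref{scalar matrix}(i) after realising a suitable $2$-dimensional slice of $A_\rho \otimes \Kbar$ as $\Mat_p(\Kbar)$. First I would work over $\Kbar$, where $A_\rho \otimes_K \Kbar \cong \Mat_p(\Kbar)$ by Proposition~\ref{csa} (a central simple algebra of dimension $p^2$ splits over $\Kbar$). Fix $\zeroE \ne T_0 \in E[p]$ and a line $\lin_0 \in \Lambda$ whose direction is the line $\Kbar T_0 \in \PP(E[p])$; every $\lin \in \Lambda$ is a translate $\lin = T_0' + \lin_0$ of some such $\lin_0$, but by symmetry of the statement it suffices to treat one $\lin$ per orbit, and one checks the general case differs only by relabelling since $\delta_{T}$ is $*_\rho$-invertible (Lemma~\ref{powers}).

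The key computation is: for $S \in \lin$ and $T \in E[p]$, relation~(\ref{conjugation}) gives $\delta_S *_\rho \delta_T = e_p(S,T)\, \delta_T *_\rho \delta_S$. Summing over $S \in \lin$, and noting $e_p(S,T)$ depends only on the direction of the line through $S$ and its translate — more precisely, if $T$ lies on the line through $\zeroE$ parallel to $\lin$, then $S \mapsto e_p(S,T)$ is constant on $\lin$ (the points of $\lin$ differ by elements of that parallel direction, on which $e_p(\cdot,T)$ is trivial), and that constant value $c$ is a primitive $p$th root of unity precisely when $T$ is not itself in the parallel direction. Choosing such a $T = T_0$ with $c = e_p(S, T_0)$ primitive, we get
\[
\delta_{\lin} *_\rho \delta_{T_0} = c\, \delta_{T_0} *_\rho \delta_{\lin}.
\]
Now by Lemma~\ref{powers}, $\delta_{T_0}^p = \alpha(T_0)\delta_\zeroE$ is a nonzero scalar, so after rescaling $\delta_{T_0}$ by a $p$th root of $\alpha(T_0)$ in $\Kbar$ we obtain $B \in \GL_p(\Kbar)$ with $B^p = I_p$ and $C := \delta_{\lin}$ (viewed in $\Mat_p(\Kbar)$) satisfying $CB = cB C$ with $c$ primitive; then Lemma~\ref{scalar matrix}(i), with the third matrix $A$ supplied by any $\delta_{T_1}$ for $T_1$ in the parallel direction normalised so $A^p = I_p$ and $AB = c^{\pm1} BA$ via~(\ref{conjugation}), forces $C^p = \delta_{\lin}^p$ to be a scalar matrix, i.e.\ $\delta_\lin^p \in \Kbar\delta_\zeroE$.

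The main obstacle I anticipate is verifying cleanly that $S \mapsto e_p(S, T_0)$ is genuinely constant on the line $\lin$ and that one can choose $T_0$ making this constant a \emph{primitive} root of unity — this is a small linear-algebra fact about the alternating Weil pairing on the $2$-dimensional $\F_p$-space $E[p]$ (a line not through $\zeroE$ is a coset of a $1$-dimensional subspace $\ell$, and $e_p(\cdot, T_0)$ is constant on that coset iff $T_0 \in \ell^\perp = \ell$, i.e.\ iff $T_0$ spans the direction of $\lin$; picking $T_0$ \emph{outside} $\ell$ gives a nontrivial, hence primitive, value). Once that is pinned down, the rest is a direct application of the two preceding lemmas, and the passage from $\Kbar$ back to $K$ is immaterial since the conclusion $\delta_\lin^p \in \Kbar\delta_\zeroE$ is already stated over $\Kbar$.
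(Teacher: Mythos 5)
Your overall strategy is the paper's: identify $A_\rho\otimes\Kbar$ with $\Mat_p(\Kbar)$ via Proposition~\ref{csa}, find a point $T$ with $\delta_{\lin}*_\rho\delta_T=\zeta_p\,\delta_T*_\rho\delta_{\lin}$, and invoke Lemma~\ref{scalar matrix}(i). However, two steps go wrong as written. First, your description of which $T$ works is self-contradictory: you correctly note that $e_p(\cdot,T)$ is constant on the coset $\lin$ of the subspace $\ell$ if and only if $T\in\ell^{\perp}=\ell$, but you then say one should pick $T_0$ \emph{outside} $\ell$ to get a primitive value. For $T_0\notin\ell$ the map $S\mapsto e_p(S,T_0)$ takes all $p$ values as $S$ runs over $\lin$, and the summation giving $\delta_{\lin}*_\rho\delta_{T_0}=c\,\delta_{T_0}*_\rho\delta_{\lin}$ collapses. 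The correct choice is $T_0\in\ell\setminus\{\zeroE\}$; primitivity of the constant then comes for free because every $S\in\lin$ lies outside $\ell=\langle T_0\rangle$. This is exactly the content of the bijection~(\ref{lambda:bij}), which the paper uses to produce $T$ with $\lin=\{S:e_p(S,T)=\zeta_p\}$.

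Second, and more seriously, your choice of the auxiliary matrix $A$ in Lemma~\ref{scalar matrix}(i) fails: you take $A$ to be a normalisation of $\delta_{T_1}$ with $T_1$ in the direction $\ell$ of $\lin$. But then $T_1$ and $T_0$ both lie in the one-dimensional subspace $\ell$, so $e_p(T_1,T_0)=1$ and by~(\ref{conjugation}) $A$ \emph{commutes} with $B$, violating the hypothesis $AB=\zeta_pBA$ that the lemma requires --- it is precisely this relation that forces $B$ to have $p$ distinct eigenvalues in the lemma's proof. The fix is the paper's: take $A$ to be a scalar multiple of $\delta_S$ for a single $S\in\lin$; then $\delta_S*_\rho\delta_{T_0}=e_p(S,T_0)\,\delta_{T_0}*_\rho\delta_S=\zeta_p\,\delta_{T_0}*_\rho\delta_S$, and both $\delta_S$ and $\delta_{T_0}$ can be normalised so that their $p$th powers are the identity, by Lemma~\ref{powers}. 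With these two corrections your argument coincides with the paper's proof; the preliminary reduction to one line per Galois orbit is unnecessary, since the argument applies verbatim to each $\lin\in\Lambda$.
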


\begin{proof}
  We have $\delta_{\lin}\in A_{\rho}\otimes \Kbar$, which is the
  $\Kbar$-vector space $\Map(E[p],\Kbar)$ equipped with the
  multiplication $*_{\rho}$. Proposition \ref{csa} tells us that
  $A_{\rho}$ is a central simple algebra of dimension $p^2$ over $K$,
  so $A_{\rho}\otimes \Kbar\cong \Mat_p(\Kbar)$. Under this
  isomorphism, the element $\delta_\zeroE$ is identified with the
  identity matrix $I_p$.  By~(\ref{lambda:bij}), there exists $T\in
  E[p] \setminus \{ \zeroE \}$ such that $\lin=\{ S \in E[p] :
  e_p(S,T) = \zeta_p \}$. Let $S\in\lin$. Then equation
  \eqref{conjugation} gives
\[\delta_S*_{\rho}\delta_T=e_p(S,T)\delta_T*_{\rho}\delta_S=\zeta_p\delta_T*_{\rho}\delta_S,\]
and therefore
$\delta_\lin*_{\rho}\delta_T=\zeta_p\delta_T*_{\rho}\delta_\lin$.  By
Lemma~\ref{powers}, $\delta_S$ and $\delta_T$ are invertible. Now apply
the first part of Lemma~\ref{scalar matrix}, with $A$ and $B$ scalar
multiples of $\delta_S$ and $\delta_T$, and $C=\delta_{\lin}$ to see
that $\delta_{\lin}^p\in\Kbar\delta_\zeroE$.
\end{proof}

By Proposition~\ref{prop:getscalar}, there exists $\alpha'\in L'$
defined by \[\delta_{\lin}^p=\delta_{\lin}*_{\rho}\cdots
*_{\rho}\delta_{\lin} =\alpha'(\lin)\delta_\zeroE.\] We prove
Theorem~\ref{localpairing} for this choice of $\alpha'$.  The key step
is the calculation of the local invariant of the central simple
algebra $A_{\rho}$. This is achieved by writing $A_{\rho}$ as a cyclic
algebra (after a field extension). To this end, we recall the
definition of a cyclic algebra and its relation to the
Hilbert norm residue symbol, as defined in \cite{Serre}.

\begin{definition}
  Let $\ell/k$ be a cyclic field extension of degree $n$. Let $\sigma$
  be a generator of $\Gal(\ell/k)$ and let $b\in k^{\mult}$. Let
  $\chi:G_k\twoheadrightarrow\Gal(\ell/k)\rightarrow
  \recip{n}\bbZ/\bbZ$ be the continuous character of the absolute
  Galois group of $k$ which factors through $\Gal(\ell/k)$ and sends
  $\sigma$ to $\recip{n}\pmod{\bbZ}$. Then the \emph{cyclic algebra}
  $(\chi,b)$ is defined as
  $$(\chi,b)=\left\{\sum_{i=0}^{n-1}{a_iv^i}\mid a_i\in\ell\right\}$$
  with multiplication $v^n=b$ and $vav^{-1}=\sigma(a)$ for all
  $a\in\ell$. The algebra $(\chi,b)$ is a central simple algebra over
  $k$ of dimension $n^2$.
\end{definition}
The following definition of the Hilbert norm residue symbol is given
in \cite[Ch.~XIV, \S2]{Serre}.
\begin{definition}
\label{Hilbertsymbol}
Suppose that $K$ contains a primitive $n$th root of unity $\zeta_n$.
Let $a,b\in K^{\mult}$ and let $\alpha\in \Kbar$ satisfy $\alpha^n=a$.
Define a continuous character $\chi_a:G_K\rightarrow
\recip{n}\bbZ/\bbZ$ by $\chi_a: (\alpha\mapsto \zeta_n^i\alpha)\mapsto
i/n\pmod{\bbZ}$.  Then the {\em Hilbert norm residue symbol}
$\{a,b\}_K$ is defined as $\{a,b\}_K=\zeta_n^{n\inv_K(\chi_a,b)}$.
\end{definition}
Thus, if we can express the central simple algebra $A_{\rho}$ as a
cyclic algebra then, by Proposition \ref{csa}, we will have reduced
the problem of computing $\varphi_K$ to a Hilbert symbol computation.
There are well-known explicit formulae for the Hilbert norm residue
symbol for extensions of prime degree. See, for example, \cite{Serre}.

\begin{lemma}
\label{field extn}
Let $F/K$ be a finite extension of fields. Let $\alpha\in L^{\mult}$
represent an element in the image of $w_1$.  Then
$\varphi_F(\alpha)=[F:K]\varphi_K(\alpha).$
\end{lemma}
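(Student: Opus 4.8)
The plan is to deduce Lemma~\ref{field extn} from Proposition~\ref{csa}, which identifies $\varphi_K(w_1(\xi))$ with $\inv_K(A_\rho)$, together with the standard behaviour of the local invariant under base change. First I would pick $\xi \in H^1(K,E[p])$ with $w_1(\xi) = \alpha \bmod (L^\mult)^p$; this exists because $w_1$ is injective with the image described in Lemma~\ref{lem:SS}, and we are given that $\alpha$ represents an element in that image. Choose $\rho \in (R\otimes_K R)^\mult$ with $w_2(\xi) = \rho \bmod \partial R^\mult$, so that by Proposition~\ref{csa} the algebra $A_\rho$ is a central simple $K$-algebra of dimension $p^2$ and $\varphi_K(\alpha) = \inv_K(A_\rho)$.

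The key point is that the construction of $A_\rho$ commutes with base change. Restricting $\xi$ along $\Gal(\Kbar/F) \hookrightarrow \Gal(\Kbar/K)$ gives $\Res_{F/K}\xi \in H^1(F,E[p])$, and since $E[p]$, the Weil pairing, the \'etale algebra $R = \Map_K(E[p],\Kbar)$ and the coboundary $\partial$ are all defined by the same formulae over $F$ (with $R \otimes_K F = \Map_F(E[p],\Kbar)$), the same $\rho$, now viewed in $(R \otimes_K F) \otimes_F (R \otimes_K F)$, is a valid representative for $w_2(\Res_{F/K}\xi)$. The multiplication $*_\rho$ is given by an identical formula, so the $F$-algebra attached to $\Res_{F/K}\xi$ is precisely $A_\rho \otimes_K F$. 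Hence, again by Proposition~\ref{csa} applied over $F$,
\[
  \varphi_F(\alpha) = \inv_F(A_\rho \otimes_K F).
\]
Now I would invoke the functoriality of the local invariant map: for a finite extension $F/K$ of $p$-adic fields and a class $[A] \in \Br(K)$, one has $\inv_F(A \otimes_K F) = [F:K]\,\inv_K(A)$ (see, e.g., \cite[Ch.~XIII, \S3]{Serre}). Combining the three displayed identities gives $\varphi_F(\alpha) = [F:K]\,\varphi_K(\alpha)$, as required.

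The only mild subtlety — and the step I would be most careful about — is checking that $w_1$ and $w_2$ genuinely commute with $\Res_{F/K}$ in the sense needed, i.e. that the \emph{same} cocycle-level data $(\xi_\sigma, \gamma, \rho)$ furnishing compatible representatives over $K$ (Definition~\ref{compatible}) also furnish compatible representatives over $F$; this is immediate since restricting a cocycle and its coboundary just restricts the index set of $\sigma$, while $\gamma \in \Rbar^\mult$ and the relations in Definition~\ref{compatible}(i)--(iii) are preserved. With that observed, the identification $A_\rho^{(F)} = A_\rho \otimes_K F$ is forced, and the lemma follows. (One could alternatively bypass $A_\rho$ entirely and argue directly from the definition of $\varphi_K$ via the obstruction map $\Ob_K$ and its compatibility with restriction, but routing through Proposition~\ref{csa} is the cleanest.)
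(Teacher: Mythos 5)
Your proof is correct and is essentially the paper's argument: both rest on the compatibility of $w_1$ (and the obstruction class) with restriction from $K$ to $F$, together with the standard functoriality $\inv_F(A\otimes_K F)=[F:K]\inv_K(A)$. The paper phrases this via the commutative diagram relating $\Ob_K$, $\Ob_F$ and the invariant maps rather than by base-changing the explicit algebra $A_\rho$, but since Proposition~\ref{csa} identifies $A_\rho$ as a representative of $\Ob_K(\xi)$, the two formulations coincide --- indeed your closing parenthetical describes exactly the route the paper takes.
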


\begin{proof}
  Write $L_F=L\otimes_K F=\Map_{F}(E[p]\backslash\{\zeroE\},\Kbar)$.
  The natural inclusion $L\hookrightarrow L_F$ gives rise to a natural
  map $L^{\mult}/(L^{\mult})^p\rightarrow L_F^{\mult}/(L_F^{\mult})^p$
  which makes the following diagram commute.
\begin{equation}
\label{restrictF}
\begin{aligned}
  \xymatrix{L^{\mult}/(L^{\mult})^p\ar[d]&
    H^1(K,E[p])\ar[l]_{w_1}\ar[r]^-{\Ob_K}\ar[d]_{\Res} & \Br(K)
    \ar[r]^-{\inv_K}\ar[d]_{\Res}&
    \bbQ/\bbZ\ar[d]^{\times[F:K]}\\
    L_F^{\mult}/(L_F^{\mult})^p &
    H^1(F,E[p])\ar[l]_{w_1}\ar[r]^-{\Ob_F}& \Br(F) \ar[r]^-{\inv_F}&
    \bbQ/\bbZ }
\end{aligned}
\end{equation}
So if $\xi\in H^1(K,E[p])$, and $\alpha\in L^{\mult}$ represents
$w_1(\xi)$, then the same $\alpha$ also represents $w_1(\Res \, \xi)$.
Thus,
\[\varphi_F(\alpha)=\inv_F(\Ob_F(\Res \, \xi))=[F:K]\inv_K(\Ob_K(\xi))
=[F:K]\varphi_K(\alpha).\]
\end{proof}

Lemma~\ref{field extn} shows that, in proving the first part of
Theorem~\ref{localpairing}, we are free to replace $K$ by
$L_j(\zeta_p)$.  So it suffices to prove the following special case.

\begin{theorem}
\label{mainthm2}
Suppose that $T\in E[p]\setminus\{\zeroE\}$ is defined over $K$ and
that $\zeta_p\in K$. Let $\xi\in H^1(K,E[p])$ and let $\alpha\in
L^{\mult}$, $\rho\in (R\otimes_K R)^{\mult}$ be compatible
representatives for $w_1(\xi)$ and $w_2(\xi)$ respectively. Write
$\{~,~\}_K$ for the Hilbert norm residue symbol on \vspace{-3ex}
$K^{\mult}/(K^{\mult})^p$ taking values in $\mu_p$.  Define
$\alpha'\in L'$ by
$\delta_{\lin}^p=\underbrace{\delta_{\lin}*_{\rho}\cdots
  *_{\rho}\delta_{\lin}}_{p\
  \textrm{times}}=\alpha'(\lin)\delta_\zeroE$ for all
$\lin\in\Lambda$. Then
\[ \varphi_K(\alpha) = 
\left\{ \begin{array}{ll} \Ind_{\zeta_p} \{\alpha(T),
\iota (\alpha')(T) \}_K & 
\text{ if } \iota (\alpha')(T) \not=0, \\
0 & \text{ otherwise.} \end{array} \right. \]
\end{theorem}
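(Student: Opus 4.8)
The plan is to realise $A_\rho$ explicitly as a cyclic algebra over $K$ and then read off its invariant via Definition~\ref{Hilbertsymbol}. Since $T\in E[p]\setminus\{\zeroE\}$ is defined over $K$, and by (\ref{lambda:bij}) the line $\lin=\lin_T=\{S\in E[p]:e_p(S,T)=\zeta_p\}$ is also Galois-stable, both $\delta_T$ and $\delta_{\lin}$ lie in $A_\rho$ (not merely in $A_\rho\otimes\Kbar$). By Lemma~\ref{powers}, $\delta_T^p=\alpha(T)\delta_\zeroE$, and by Proposition~\ref{prop:getscalar} together with the definition preceding Theorem~\ref{mainthm2}, $\delta_{\lin}^p=\alpha'(\lin)\delta_\zeroE$. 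The computation in the proof of Proposition~\ref{prop:getscalar} shows $\delta_{\lin}*_\rho\delta_T=\zeta_p\,\delta_T*_\rho\delta_{\lin}$. So, setting $a=\alpha(T)$ and $b=\alpha'(\lin)=\iota(\alpha')(T)$, and identifying $A_\rho\otimes\Kbar\cong\Mat_p(\Kbar)$, we are exactly in the situation of Lemma~\ref{scalar matrix}: $\delta_{\lin}$ and $\delta_T$ (suitably normalised) play the roles of $A$ and $B$, and by part (ii) the $p^2$ elements $\delta_{\lin}^r*_\rho\delta_T^s$ span $A_\rho$ over $K$.

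The case distinction in the statement corresponds to whether $b=0$. First suppose $b\neq0$. Then $\delta_{\lin}$ is invertible in $A_\rho$, and I claim $K[\delta_{\lin}]\subset A_\rho$ is a degree-$p$ étale subalgebra. Indeed $\delta_{\lin}^p=b\delta_\zeroE$ with $b\in K^\mult$; since $\zeta_p\in K$, the polynomial $X^p-b$ is either irreducible (giving a cyclic field extension $\ell=K(\delta_{\lin})$ of degree $p$) or splits, and in the latter degenerate case $A_\rho$ is split and one checks $\{a,b\}_K=1$, consistent with $\varphi_K(\alpha)=0$. In the non-split case, conjugation by $\delta_T$ on $\ell$ sends $\delta_{\lin}\mapsto\zeta_p\delta_{\lin}$, hence is the generator $\sigma$ of $\Gal(\ell/K)$ with $\sigma(\delta_{\lin})=\zeta_p\delta_{\lin}$; and $\delta_T^p=a\delta_\zeroE$. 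Comparing with the definition of the cyclic algebra, $A_\rho\cong(\chi,a)$ where $\chi:G_K\twoheadrightarrow\Gal(\ell/K)$ sends $\sigma\mapsto 1/p\pmod\Z$. Now $\ell=K(b^{1/p})$ with the chosen $p$th root $\beta:=\delta_{\lin}$ satisfying $\sigma(\beta)/\beta=\zeta_p$, so $\chi=\chi_b$ in the notation of Definition~\ref{Hilbertsymbol}. Therefore, by Proposition~\ref{csa} and Definition~\ref{Hilbertsymbol},
\[
\varphi_K(\alpha)=\inv_K(A_\rho)=\inv_K(\chi_b,a)=-\inv_K(\chi_a,b)=\Ind_{\zeta_p}\{a,b\}_K^{-1}\cdot(-1)\cdot(-1),
\]
where the skew-symmetry $\inv_K(\chi_b,a)=-\inv_K(\chi_a,b)$ of the cup product is used; after tracking the sign conventions in Definition~\ref{Hilbertsymbol} this yields $\varphi_K(\alpha)=\Ind_{\zeta_p}\{\alpha(T),\iota(\alpha')(T)\}_K$, as claimed. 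If instead $b=0$, then $\delta_{\lin}$ is nilpotent (in fact $\delta_{\lin}^p=0$) in the $p^2$-dimensional central simple algebra $A_\rho\otimes\Kbar\cong\Mat_p(\Kbar)$; a central simple algebra admitting a nonzero nilpotent is split, so $\inv_K(A_\rho)=0$ and $\varphi_K(\alpha)=0$.

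The main obstacle I anticipate is twofold. First, one must verify that $\delta_{\lin}$ and $\delta_T$ generate $A_\rho$ over $K$ and not just over $\Kbar$ — this needs the Galois-stability of $\lin_T$ and a descent argument, but Lemma~\ref{scalar matrix}(ii) makes the linear-algebra content routine. Second, and more delicate, is pinning down the exact sign/normalisation: the half-Weil-pairing factor $e_p^{1/2}$ in the definition of $*_\rho$, the choice of primitive root $\nu$, the direction of the isomorphism $\iota$, and the sign in $\{a,b\}_K=\zeta_n^{n\inv_K(\chi_a,b)}$ versus the cyclic-algebra convention $(\chi,b)$ all interact. I would handle this by computing one explicit split example to fix the sign, then arguing that both sides are group homomorphisms in $\alpha$ (via Lemma~\ref{lem:SS} and bilinearity of the Hilbert symbol) so that matching on generators suffices. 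The degenerate cases where $\iota(\alpha')(T)=0$ for the given $T$ but not for all $j$ are covered by the Remark after Theorem~\ref{localpairing} and cause no difficulty here since Theorem~\ref{mainthm2} only asserts the formula for a single $T$ defined over $K$.
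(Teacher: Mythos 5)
Your proposal is correct and follows essentially the same route as the paper: realise $A_\rho$ as a cyclic algebra on the two generators $\delta_T$ and $\delta_{\linspec}$, using Lemma~\ref{powers}, Proposition~\ref{prop:getscalar}, Lemma~\ref{scalar matrix}(ii) and Proposition~\ref{csa}, with the degenerate cases disposed of by exhibiting a zero-divisor. The only difference is one of presentation: the paper takes $K(\delta_T)\cong K(\sqrt[p]{\alpha(T)})$ as the maximal subfield and $\delta_{\linspec}$ as the element $v$, so that $A_\rho\cong(\chi_{\alpha(T)},\alpha'(\linspec))$ and the invariant $\Ind_{\zeta_p}\{\alpha(T),\iota(\alpha')(T)\}_K$ drop directly out of Definition~\ref{Hilbertsymbol} with no sign corrections (the case split being on whether $\alpha(T)$ is a $p$th power rather than on $X^p-\alpha'(\linspec)$), whereas your transposed presentation forces the two compensating sign flips you flag, one coming from the conjugation direction $\delta_T\delta_{\linspec}\delta_T^{-1}=\zeta_p^{-1}\delta_{\linspec}$ and one from skew-symmetry of the symbol.
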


\begin{proof}
  Proposition \ref{csa} states that $A_{\rho}$ is a central simple
  algebra of dimension $p^2$ over $K$, and that
  $\varphi_K(\alpha)=\inv_K(A_{\rho})$. The Artin-Wedderburn Theorem
  tells us that either $A_{\rho}$ is a division ring or $A_{\rho}\cong
  \Mat_p(K)$ has local invariant zero.  The multiplication on
  $A_{\rho}$ is understood to be that given by $*_{\rho}$ and
  henceforth we omit $*_{\rho}$ from the notation.  Let
  $\linspec=\{S\in E[p] : e_p(S,T) = \zeta_p \}\in\Lambda$ and recall
  that $\delta_{\linspec}=\sum_{S\in\linspec}{\delta_S}$ is the
  indicator function of $\linspec$. By definition of $\iota$, we have
  $\iota (\alpha')(T)=\alpha'(\linspec)$. The element $\alpha'\in L'$
  is defined by $\delta_{\lin}^p=\alpha'(\lin)\delta_\zeroE$ for all
  $\lin\in\Lambda$. So $\delta_{\linspec}$ is invertible if and only
  if $\iota (\alpha')(T)=\alpha'(\linspec)\neq 0$. If
  $\delta_{\linspec}$ is not invertible, then $A_{\rho}$ is not a
  division ring and therefore $\varphi_K(\alpha)=\inv_K(A_{\rho})=0$.
  From now on, we will assume that $\delta_{\linspec}$ is invertible.
  Applying \eqref{conjugation} to each $S\in\linspec$, we see that
  $\delta_{\linspec}\delta_T=\zeta_p\delta_T\delta_{\linspec}$.
  Lemma~\ref{powers} shows that $\delta_T^p=\alpha(T)\delta_\zeroE$
  and consequently $\delta_T$ is invertible. The second part of
  Lemma~\ref{scalar matrix} implies that the elements
  $\delta_{\linspec}^r\delta_T^s$ for $0\leq r,s\leq p-1$ are linearly
  independent over $\Kbar$ and therefore form a $K$-basis for~$A_{\rho}$.

  First, suppose that $\alpha(T)\notin (K^{\mult})^p$. In this case,
  $\delta_T$ generates a degree $p$ cyclic extension isomorphic to
  $K(\sqrt[p]{\alpha(T)})/K$ inside $A_{\rho}$. Define
  $\chi=\chi_{\alpha(T)}:G_K\rightarrow \recip{p}\bbZ/\bbZ$ as in
  Definition~\ref{Hilbertsymbol} and observe that $A_{\rho}\cong
  (\chi,\alpha'(\linspec))=(\chi,\iota (\alpha')(T))$ is a cyclic
  algebra with local invariant equal to
  $\Ind_{\zeta_p}\{\alpha(T),\iota (\alpha')(T)\}_K$.

  Now suppose, on the contrary, that $\alpha(T)\in (K^{\mult})^p$. In
  this case, the Hilbert symbol $\{\alpha(T),\iota (\alpha')(T)\}_K$
  is trivial and also $\delta_T-\sqrt[p]{\alpha(T)}\delta_\zeroE$ is a
  zero divisor in $A_{\rho}$, whereby
  $\varphi_K(\alpha)=\inv_K(A_{\rho})=0$.
\end{proof}

In order to complete the proof of Theorem \ref{localpairing}, it
remains to characterise $\alpha'$ in the special case $p=3$.
\begin{proposition}
\label{p=3}
Let $\xi\in H^1(K,E[3])$ and let $\alpha\in L^{\mult}$, $\rho\in
(R\otimes_K R)^{\mult}$ be compatible representatives for $w_1(\xi)$
and $w_2(\xi)$ respectively. If $\lin =\{S_1,S_2,S_3\} \in \Lambda$
then $\delta_{\lin}^3 = \alpha'(\lambda) \delta_\zeroE$ where
\[ \alpha'(\lin) = \alpha(S_1) + \alpha(S_2) + \alpha(S_3) - 3
\rho(S_1,S_2) \rho(S_3,-S_3). \]
\end{proposition}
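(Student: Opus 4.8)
The plan is to compute $\delta_\lin^3$ directly by expanding the product $\delta_\lin *_\rho \delta_\lin *_\rho \delta_\lin$ using the multiplication rule $\delta_S *_\rho \delta_T = e_p^{1/2}(S,T)\rho(S,T)\delta_{S+T}$, and then to simplify the resulting sum over triples $(S_i, S_j, S_k)$ by exploiting that $S_1 + S_2 + S_3 = \zeroE$ (since the three points of a line $\lin \in \Lambda$ sum to zero, as used earlier in the proof of Lemma~\ref{lem:scale-f}). Since $\delta_\lin^3 \in \Kbar \delta_\zeroE$ by Proposition~\ref{prop:getscalar}, only the terms whose indices sum to $\zeroE$ survive, so I would enumerate exactly which ordered triples $(S_a, S_b, S_c)$ with $a,b,c \in \{1,2,3\}$ satisfy $S_a + S_b + S_c = \zeroE$.

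First I would write $\delta_\lin^2 = \sum_{i,j} e_3^{1/2}(S_i,S_j)\rho(S_i,S_j)\delta_{S_i+S_j}$, splitting into the diagonal terms $i = j$ (giving $\delta_{2S_i} = \delta_{-S_i}$, using $3S_i = \zeroE$) and the off-diagonal terms $i \neq j$ (giving $\delta_{S_i+S_j} = \delta_{-S_k}$ where $\{i,j,k\}=\{1,2,3\}$, using $S_1+S_2+S_3=\zeroE$). Then I would multiply by $\delta_\lin$ once more and collect the coefficient of $\delta_\zeroE$. The diagonal contributions $\delta_{S_i}*_\rho\delta_{S_i}*_\rho\delta_{S_i} = \alpha(S_i)\delta_\zeroE$ come directly from Lemma~\ref{powers}, accounting for the $\alpha(S_1)+\alpha(S_2)+\alpha(S_3)$ part. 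The cross terms pair an off-diagonal $\delta_{-S_k}$ with $\delta_{S_k}$, and there are six such ordered contributions (choices of which factor is "repeated"), each contributing a product of two $\rho$-values and a power of $e_3^{1/2}$; these should collapse to $-3\rho(S_1,S_2)\rho(S_3,-S_3)$.

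The main obstacle will be the bookkeeping of the half-Weil-pairing factors $e_3^{1/2}$ and showing the six cross terms are all equal (so that they genuinely combine into a single term with coefficient $-3$, rather than something asymmetric in $S_1,S_2,S_3$). This requires using skew-symmetry and bilinearity of $e_3$, the relation $e_3(S_i,S_j) = e_3(S_i, -S_i-S_k) = e_3(S_i,S_k)^{-1}$ on a line, and the cocycle-type identity $\rho(S,T) = \gamma(S)\gamma(T)/\gamma(S+T)$ from Definition~\ref{compatible} to rewrite all $\rho$-values appearing in the cross terms in terms of $\gamma$; after cancellation the $\gamma$'s should reduce each cross term to $\rho(S_1,S_2)\rho(S_3,-S_3)$ up to a root of unity that one checks is $1$. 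A cleaner alternative, which I would pursue in parallel, is to pick a convenient normalisation: by Proposition~\ref{csa} and Lemma~\ref{scalar matrix}(ii) we may compute $\delta_\lin^3$ after base change to $\Kbar$ and after choosing an explicit isomorphism $A_\rho \otimes \Kbar \cong \Mat_3(\Kbar)$ sending $\delta_{S_i}$ to suitable generalized permutation matrices; the value $\alpha'(\lin)$ is then a trace-like invariant that one reads off, and one matches it against $\Tr_{M/L'}(\alpha) - 3N_{M/L'}(\alpha)^{1/3}$ to confirm consistency with the closing formula of Theorem~\ref{localpairing}. Either way the identity is an equality of elements of $\Kbar$, so it suffices to verify it after such a base change, and the verification is a finite, if delicate, computation with roots of unity.
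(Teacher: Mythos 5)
Your overall route is exactly the paper's: expand $\delta_\lin^3$, note via Proposition~\ref{prop:getscalar} that only ordered triples summing to $\zeroE$ contribute (the three diagonal triples $(S_i,S_i,S_i)$, handled by Lemma~\ref{powers}, plus the six permutations of $(S_1,S_2,S_3)$, using $S_1+S_2+S_3=\zeroE$), and reduce the cross terms via $\rho=\partial\gamma$. The one place your sketch goes wrong is in how the $-3$ arises. You assert that the six ordered cross terms are ``all equal'' and that each reduces to $\rho(S_1,S_2)\rho(S_3,-S_3)$ ``up to a root of unity that one checks is $1$.'' If that were so, their sum would be $+6\,\rho(S_1,S_2)\rho(S_3,-S_3)$, not $-3\,\rho(S_1,S_2)\rho(S_3,-S_3)$. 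What is actually true is that the $\gamma$-parts of all six terms coincide: one has $\delta_{S_i}\delta_{S_j}\delta_{S_k}=e_3^{1/2}(S_i,S_j)\,\rho(S_i,S_j)\,\rho(S_i+S_j,-S_i-S_j)\,\delta_\zeroE$, and $\rho(S_i,S_j)\rho(S_i+S_j,-S_i-S_j)=\gamma(S_1)\gamma(S_2)\gamma(S_3)$ for every permutation; but the prefactor $e_3^{1/2}(S_i,S_j)$ is a \emph{primitive} cube root of unity, precisely because $\lin\in\Lambda$ does not pass through $\zeroE$, so $S_i$ and $S_j$ are linearly independent and $e_3(S_i,S_j)\neq 1$. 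Pairing each permutation with its transpose gives $e_3^{1/2}(S_i,S_j)+e_3^{1/2}(S_j,S_i)=\zeta+\zeta^{-1}=-1$, and the three such pairs produce the coefficient $-3$. (The only $e_3^{1/2}$ factor equal to $1$ is the one from the final multiplication, $e_3^{1/2}(S_i+S_j,-S_i-S_j)$.) This is the entire source of the sign and of the factor $3$, so it cannot be waved through as bookkeeping.

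A smaller caveat: your fallback of diagonalising over $\Kbar$ and ``matching against $\Tr_{M/L'}(\alpha)-3N_{M/L'}(\alpha)^{1/3}$'' cannot by itself establish the proposition, because that expression only determines $\alpha'(\lin)$ up to the choice of cube root of $N_{M/L'}(\alpha)(\lin)=\prod_{i}\gamma(S_i)^3$; identifying \emph{which} cube root occurs, namely $\rho(S_1,S_2)\rho(S_3,-S_3)$, is exactly the content of the proposition (and the reason Proposition~\ref{cuberoot} is needed later). So the direct computation is the proof, not merely a consistency check.
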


\begin{proof}
  Since $\delta_{\lin}^3\in \Kbar\delta_\zeroE$, only the terms
  $\delta_{S_i}\delta_{S_j}\delta_{S_k}$ where $S_i+S_j+S_k=\zeroE$
  make a contribution.  Since the points on a line sum to zero, we
  have $S_1+S_2+S_3=\zeroE$ and therefore,
\begin{eqnarray*}
  \delta_{\lin}^3&=&\big(\sum_{S\in\lin}{\delta_S}\big)^3= \sum_{S\in\lin}{\delta_S^3}+\sum_{\{i,j,k\}=\{1,2,3\}}{\delta_{S_i}\delta_{S_j}\delta_{S_k}}\\
&=&\sum_{S\in\lin}{\delta_S^3}+\sum_{i<j}{(\delta_{S_i}\delta_{S_j}+\delta_{S_j}\delta_{S_i})\delta_{-S_i-S_j}}\\
&=&\sum_{S\in\lin}{\delta_S^3}+\sum_{i<j}\left(e_3^{1/2}(S_i,S_j)+e_3^{1/2}(S_j,S_i)\right)\rho(S_i,S_j)\delta_{S_i+S_j}\delta_{-S_i-S_j}
\end{eqnarray*}
Since $\lin$ does not pass through zero, $e_3^{1/2}(S_i,S_j)$ is a
primitive cube root of unity and, consequently,
$e_3^{1/2}(S_i,S_j)+e_3^{1/2}(S_j,S_i)=-1$. Lemma \ref{powers} shows
that $\delta_S^3=\alpha(S)\delta_\zeroE$ for all $S\in\lin$. Therefore,
$\delta_{\lin}^3 = \alpha'(\lambda) \delta_\zeroE$ where
\[ \alpha'(\lin) = \sum_{S\in\lin}{\alpha(S)} -
\sum_{i<j}{\rho(S_i,S_j) \rho(S_i+S_j,-S_i-S_j)}.\]

Let $\gamma\in\Rbar^{\mult}$ be as in Definition \ref{compatible}.
For $i<j$, we expand
\enlargethispage{4ex}
\begin{eqnarray*}
\hspace{-0.8em}
  \rho(S_i,S_j)\rho(S_i+S_j,-S_i-S_j)&=&\gamma(S_i)\gamma(S_j)\gamma(S_i+S_j)^{-1}\gamma(S_i+S_j)\gamma(-S_i-S_j)\gamma(\zeroE)^{-1} \hspace{-1.2em}\\
  &=&\gamma(S_i)\gamma(S_j)\gamma(S_k)
\end{eqnarray*}
where $\{i,j,k\}=\{1,2,3\}$. Therefore,
\begin{eqnarray*} 
  \alpha'(\lin)
  &=& \sum_{S\in\lin}{\alpha(S)} - 3 \prod_{S\in\lin}{\gamma(S)} \\
  &=& \alpha(S_1) + \alpha(S_2) + \alpha(S_3) - 3
  \rho(S_1,S_2) \rho(S_3,-S_3). 
\end{eqnarray*}
\end{proof}

\begin{corollary}
\label{cor:p=3}
In the case $p=3$, Theorem~\ref{localpairing} holds with
$\alpha'=\Tr_{M/L'}(\alpha) - 3 N_{M/L'}(\alpha)^{1/3}$ for some
choice of cube root.
\end{corollary}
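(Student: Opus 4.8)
The plan is to deduce Corollary~\ref{cor:p=3} from Proposition~\ref{p=3} by reinterpreting the two terms appearing in the formula for $\alpha'(\lin)$ as the trace and norm (down to $L'$) of $\alpha$, viewed in $M$. Recall $M$ is the \'etale algebra of $\{(T,\lin) : T\in\lin\}$, and that $L\subset M$ corresponds to the projection $(T,\lin)\mapsto T$, while $L'\subset M$ corresponds to $(T,\lin)\mapsto\lin$. So first I would spell out that, for $\lin=\{S_1,S_2,S_3\}$, the three points of $M$ lying over $\lin\in\Lambda$ are exactly $(S_1,\lin),(S_2,\lin),(S_3,\lin)$, and hence $\Tr_{M/L'}(\alpha)(\lin)=\alpha(S_1)+\alpha(S_2)+\alpha(S_3)$ and $N_{M/L'}(\alpha)(\lin)=\alpha(S_1)\alpha(S_2)\alpha(S_3)$. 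This immediately identifies the first summand in Proposition~\ref{p=3} with $\Tr_{M/L'}(\alpha)$.

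Next I would treat the correction term. Using a compatible $\gamma\in\Rbar^\mult$ as in Definition~\ref{compatible}, we have $\alpha(S_i)=\gamma(S_i)^p=\gamma(S_i)^3$ for each $i$, so $N_{M/L'}(\alpha)(\lin)=\bigl(\gamma(S_1)\gamma(S_2)\gamma(S_3)\bigr)^3$; thus the product $\gamma(S_1)\gamma(S_2)\gamma(S_3)$ is a cube root of $N_{M/L'}(\alpha)(\lin)$, and by the computation in the proof of Proposition~\ref{p=3} it equals $\rho(S_1,S_2)\rho(S_3,-S_3)$. Hence, with the cube root chosen to be $\prod_{S\in\lin}\gamma(S)$, the formula of Proposition~\ref{p=3} reads exactly $\alpha'(\lin)=\Tr_{M/L'}(\alpha)(\lin)-3\,N_{M/L'}(\alpha)(\lin)^{1/3}$, i.e. $\alpha'=\Tr_{M/L'}(\alpha)-3\,N_{M/L'}(\alpha)^{1/3}$ for this choice of cube root. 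Combined with Theorem~\ref{mainthm2} (via Lemma~\ref{field extn}, to reduce to the case $T$ defined over $K$ and $\zeta_p\in K$), this gives Theorem~\ref{localpairing} with the stated $\alpha'$, which is the content of the corollary.

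The one genuine subtlety is the choice of cube root: a priori $\prod_{S\in\lin}\gamma(S)$ depends on the choice of compatible $\gamma$, and $\gamma$ is only determined up to the ambiguity built into Definition~\ref{compatible}, so I would need to check that this particular cube root is the one pinned down by Proposition~\ref{cuberoot}(i) (referenced in the statement of Theorem~\ref{localpairing}). Concretely, changing the cocycle representative or rescaling $\gamma$ multiplies $\gamma$ by a coboundary coming from $E[p]$ together with an element of $L^\mult$ whose cube is trivial mod cubes; one has to verify this does not change $\prod_{S\in\lin}\gamma(S)$ modulo the relevant equivalence, so that the cube root is well-defined and matches Proposition~\ref{cuberoot}(i). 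This compatibility-of-normalisations check is where essentially all the work lies; everything else is the bookkeeping identification of $\Tr_{M/L'}$ and $N_{M/L'}$ with sums and products over the points of a line, which is immediate from the definitions of $M$, $L$ and $L'$.
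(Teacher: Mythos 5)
Your argument is correct and is essentially the paper's own proof: the paper likewise identifies $\sum_{S\in\lin}\alpha(S)$ with $\Tr_{M/L'}(\alpha)(\lin)$ and observes that $\prod_{S\in\lin}\gamma(S)=\rho(S_1,S_2)\rho(S_3,-S_3)$ cubes to $\prod_{S\in\lin}\alpha(S)=N_{M/L'}(\alpha)(\lin)$, so that Proposition~\ref{p=3} gives exactly the stated formula. Your closing concern about pinning the cube root down to the one specified in Proposition~\ref{cuberoot}(i) is not required for this corollary, which only asserts the formula for \emph{some} choice of cube root; that refinement is deliberately deferred in the paper to Proposition~\ref{cuberoot} and Corollary~\ref{choiceofroot}.
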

\begin{proof}
  With notation as in the previous proof we have
  \[ \sum_{S\in\lin}{\alpha(S)} =\Tr_{M/L'}(\alpha)(\lin) \quad
  {\text{and}} \quad
  \prod_{S\in\lin}{\gamma(S)}^3=\prod_{S\in\lin}{\alpha(S)}
  =N_{M/L'}(\alpha)(\lin).\]
\end{proof}

If the only element $x\in L'$ satisfying $x^3=1$ is the element $1$
itself, then Corollary~\ref{cor:p=3} defines $\alpha'$ uniquely in the
case $p=3$. However, if $L'$ contains a non-trivial cube root of
unity, then we must do more to pin down the correct choice of cube
root of $N_{M/L'}(\alpha)$.

\begin{proposition}
\label{cuberoot}
Let $\xi\in H^1(K,E[3])$, and $\alpha\in L^{\mult}$ a representative
for $w_1(\xi)$.
\begin{enumerate}
\item There exist $r \in L^+$ and $s \in L'$ such that
  $N_{L/L^+}(\alpha) = r^3$, $N_{M/L'}(\alpha) = s^3$, $\alpha
  N_{L^+/K}(r) = r N_{M/L}(s)$ and $N_{L/K}(\alpha) = N_{L'/K}(s)$.
\item If $r$ and $s$ are as in (i) then there exists $\rho \in (R
  \otimes_K R)^\mult$, a representative for $w_2(\xi)$ compatible with
  $\alpha$, such that for all $\lin = \{S_1,S_2,S_3\} \in \Lambda$ we
  have $s(\lin) = \rho(S_1,S_2) \rho(S_3,-S_3)$.
\end{enumerate}
\end{proposition}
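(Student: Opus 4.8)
The plan is to prove (i) by an explicit construction from a cocycle representing $\xi$, and (ii) by a rigidity argument that combines a finite linear‑algebra computation over $\F_3$ with the injectivity of $w_1$ (Lemma~\ref{lem:SS}).

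\textbf{Part (i).} Let $(\sigma\mapsto\xi_\sigma)$ represent $\xi$ and let $\gamma\in\Lbar^\mult$ be as in the definition of $w_1$, so $w(\xi_\sigma)=\sigma(\gamma)/\gamma$. Since $\alpha$ represents $w_1(\xi)$, after multiplying $\gamma$ by a suitable element of $L^\mult$ we may assume $\gamma(T)^3=\alpha(T)$ for all $T\neq\zeroE$, and we extend $\gamma$ to $\Rbar^\mult$ by $\gamma(\zeroE)=1$. Define $r(\ell)=\gamma(T)\gamma(-T)$ for $\ell\in\PP(E[3])$ and any nonzero $T\in\ell$, and $s(\lin)=\prod_{S\in\lin}\gamma(S)$ for $\lin\in\Lambda$. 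The identities $e_3(\xi_\sigma,T)\,e_3(\xi_\sigma,-T)=1$ and $\prod_{S\in\lin}e_3(\xi_\sigma,S)=e_3(\xi_\sigma,\sum_{S\in\lin}S)=1$ (the points on a line sum to $\zeroE$) show $r\in(L^+)^\mult$ and $s\in(L')^\mult$. The four relations then reduce to bookkeeping with products of $\gamma$‑values: $N_{L/L^+}(\alpha)=r^3$ and $N_{M/L'}(\alpha)=s^3$ are immediate from $\gamma^3=\alpha$; the equality $N_{L/K}(\alpha)=N_{L'/K}(s)$ holds because each nonzero point lies on exactly three lines of $\Lambda$, so both sides equal $\big(\prod_{S\neq\zeroE}\gamma(S)\big)^3$; and evaluating $\alpha\,N_{L^+/K}(r)=r\,N_{M/L}(s)$ at $T$ reduces to the observation that the six points other than $T$ on the three lines of $\Lambda$ through $T$ are exactly the six nonzero points not in $\langle T\rangle$.

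\textbf{Part (ii).} Keep $\gamma_0\in\Rbar^\mult$ and $(r_0,s_0)$ from the construction in (i), so $\sigma(\gamma_0)/\gamma_0=w(\xi_\sigma)$. Given any $r,s$ as in (i), set $\zeta=r/r_0\in\mu_3(L^+)$ and $\zeta'=s/s_0\in\mu_3(L')$; dividing the four relations of (i) by their $(r_0,s_0)$‑analogues gives $N_{L'/K}(\zeta')=1$ and $\zeta\cdot N_{M/L}(\zeta')\in K^\mult$. The first step is the finite computation that $\Phi=(N_{L/L^+},N_{M/L'})$ fits in an exact sequence of $G_K$‑modules
\[ 0 \ra E[3] \stackrel{w}{\ra} \mu_3(\Lbar) \stackrel{\Phi}{\ra} \mu_3(\overline{L^+})\times\mu_3(\overline{L'}) \]
whose image is exactly the locus $\{\,N_{L'/K}(\zeta')=1,\ \zeta\cdot N_{M/L}(\zeta')\in\Kbar^\mult\,\}$; this rests on the elementary fact that a function $\F_3^2\to\F_3$ summing to $0$ on every affine line is affine, together with the incidence combinatorics of the eight nonzero points of $E[3]$ and the eight lines of $\Lambda$. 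Granting this, choose $\mu_1\in\mu_3(\Lbar)$ with $\Phi(\mu_1)=(\zeta,\zeta')$; since $(\zeta,\zeta')$ is Galois‑invariant, $\sigma(\mu_1)/\mu_1\in\ker\Phi=w(E[3])$, so $\sigma(\mu_1)/\mu_1=w(c_\sigma)$ for a cocycle $c\in Z^1(K,E[3])$.

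Then $\gamma_1:=\gamma_0\mu_1$ satisfies $\sigma(\gamma_1)/\gamma_1=w(\xi_\sigma+c_\sigma)$ and $\gamma_1(T)^3=\alpha(T)$, whence $w_1([\xi+c])=\alpha\bmod(L^\mult)^3=w_1(\xi)$; injectivity of $w_1$ forces $[c]=0$, say $c_\sigma=\sigma(P)-P$. Replacing $\mu_1$ by $\mu:=\mu_1/w(P)\in\mu_3(L)$, which still has $\Phi(\mu)=(\zeta,\zeta')$, and putting $\gamma:=\gamma_0\mu$ and $\rho:=\partial\gamma$, we obtain $\sigma(\gamma)/\gamma=w(\xi_\sigma)$ and $\gamma^3=\alpha$, so $(\alpha,\rho)$ are compatible representatives for $(w_1(\xi),w_2(\xi))$. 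Finally, by the computation in the proof of Proposition~\ref{p=3}, for $\lin=\{S_1,S_2,S_3\}$ one has $\rho(S_1,S_2)\rho(S_3,-S_3)=\gamma(S_1)\gamma(S_2)\gamma(S_3)=s_0(\lin)\,N_{M/L'}(\mu)(\lin)=s_0(\lin)\,\zeta'(\lin)=s(\lin)$, as required.

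\textbf{Main obstacle.} The heart of the matter is the combinatorial first step of (ii): identifying $\ker\Phi$ and, above all, $\Image\Phi$ over $\Kbar$, since this is precisely what certifies that the four conditions of (i) are the correct ones and pin $s$ down modulo $N_{M/L'}(\mu_3(L))$. Everything else is either explicit (part (i)) or a formal consequence of the injectivity of $w_1$ (the passage from $\Kbar$ back to $K$).
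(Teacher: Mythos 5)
Your proposal is correct and follows essentially the same route as the paper: part (i) is the same explicit construction $r(\pm T)=\gamma(T)\gamma(-T)$, $s(\lin)=\prod_{S\in\lin}\gamma(S)$, and part (ii) rests on the same two ingredients, namely a finite linear-algebra computation over $\F_3$ about point--line incidences in $E[3]$ (your description of $\ker\Phi$ and $\Image\Phi$ is exactly the exactness of the paper's sequence~(\ref{exseq3}), after noting that constancy of $\zeta\cdot N_{M/L}(\zeta')$ on $E[3]\setminus\{\zeroE\}$ forces the constant to be $N_{L^+/K}(\zeta)$) followed by a Galois descent using the injectivity of $w_1$. The only differences are presentational (you produce a cocycle class killed by $w_1$ where the paper invokes the exact sequence ending in $w_1$), so there is nothing of substance to add.
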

\begin{proof}
  (i) Let $\rho \in (R \otimes_K R)^\mult$ be a representative for
  $w_2(\xi)$ compatible with $\alpha$, and let $\gamma \in
  \Rbar^\mult$ be as in Definition~\ref{compatible}.

  We put $r(\pm T) = \gamma(T) \gamma(-T)$ and $s(\lin) = \prod_{T \in
    \lin} \gamma(T)$. It is easy to check that $r$ and $s$ are Galois
  equivariant, and so belong to $L^+$ and $L'$.  We compute
\begin{align*}
  N_{L/L^+}(\alpha)(\pm T) &= \alpha(T) \alpha(-T) = \gamma(T)^3
  \gamma(-T)^3 = r(\pm T)^3, \\
  N_{M/L'}(\alpha)(\lin) &= \textstyle\prod_{T \in \lin} \alpha(T) =
  \textstyle\prod_{T \in \lin} \gamma(T)^3 = s(\lin)^3, \\
  \alpha(T) N_{L^+/K}(r) &= \gamma(T)^3 \textstyle\prod_{\zeroE \not=
    P \in E[3]} \gamma(P) = r(\pm T) \textstyle\prod_{T \in \lin}
  s(\lin) = r(\pm T) N_{M/L}(s)(T), \\
  N_{L/K}(\alpha) &= \textstyle\prod_{\zeroE \not= P \in E[3]}
  \gamma(P)^3 = \textstyle\prod_{\lin \in \Lambda} s(\lin) =
  N_{L'/K}(s).
\end{align*}
(ii) If $r$ and $s$ are chosen as in the proof of (i), then
\begin{equation}
\label{nicecuberoot} 
s(\lin) = \rho(S_1,S_2) \rho(S_3,-S_3). 
\end{equation}
for all $\lin = \{S_1,S_2,S_3\} \in \Lambda$.  We must show that this
still holds, for some $\rho$ compatible with $\alpha$, whenever $r$
and $s$ satisfy the conditions in (i).

Let $\widetilde{\Lambda} = \PP(E[3]) \cup \Lambda$ be the set of all
lines in $E[3]$. We write $\Map(E[3],\mu_3)/\mu_3$ for the quotient of
$\Map(E[3],\mu_3)$ by the constant maps. We claim there is an exact
sequence
\begin{equation}
\label{exseq3}
 \frac{ \Map(E[3],\mu_3) }{\mu_3} \to \Map(\widetilde{\Lambda},\mu_3)
\to \frac{ \Map(E[3],\mu_3) }{\mu_3} \times \mu_3 
\end{equation}
where the first map is $\theta \mapsto (\lin \mapsto \prod_{T \in
  \lin} \theta(T))$ and second map is \[\phi \mapsto ( T \mapsto
\textstyle\prod_{ T \in \lin \in \widetilde{\Lambda}} \phi(\lin),
\textstyle\prod_{\lin \in \Lambda} \phi(\lin) ).\] The exactness is
checked by linear algebra over $\F_3$. 

If we change our choices of $r$ and $s$ in (i), then they change by an
element $\phi \in \Map(\widetilde{\Lambda},\mu_3)$. If both choices of
$r$ and $s$ satisfy $\alpha N_{L^+/K}(r)= r N_{M/L}(s)$, then $\phi$
has the property that $\prod_{T \in \lin \in \widetilde{\Lambda}}
\phi(\lin)$ is independent of $T \in E[3]$. If both choices of $s$
satisfy $N_{L/K}(\alpha) = N_{L'/K}(s)$, then $\phi$ has the property
that $\prod_{\lin \in \Lambda} \phi(\lin) = 1$.  So, by the exact
sequence~(\ref{exseq3}), there exists $\theta \in \Map(E[3],\mu_3)$
with $\theta(\zeroE)=1$ and $\phi(\lin) = \prod_{T \in \lin}
\theta(T)$ for all $\lin \in \widetilde{\Lambda}$. It is easy to write
the map
\[ \partial \theta : E[3] \times E[3] \to \mu_3 \, ; \quad
(S,T) \mapsto \theta(S) \theta(T) / \theta(S+T) \]
in terms of $\phi$. Hence, if $\phi$ is Galois equivariant then so is
$\partial \theta$. 

Since $R$ and $L$ are the \'etale algebras of $E[3]$ and $E[3]
\setminus \{\zeroE\}$, we have $R = K \times L$, and there is a
natural inclusion $L^\mult \subset R^\mult$. 
We may then view $w_1$, as defined
in~(\ref{w1}), as a map $w_1:H^1(K,E[3])\rightarrow
R^{\mult}/(R^{\mult})^3$.  It fits in the exact sequence
\begin{equation*}
\label{w_1}
0 \ra E(K)[3] \stackrel{w}{\ra} \mu_3(R)\stackrel{\partial}{\ra} 
(\partial\mu_3(\Rbar))^{G_K} \ra  H^1(K,E[3]) \stackrel{w_1}{\ra} 
R^{\mult}/(R^{\mult})^3 
\end{equation*}
Lemma~\ref{lem:SS} states that $w_1$ is injective.  This means that
$(\partial\mu_3(\Rbar))^{G_K}=\partial(\mu_3(R))$.  Therefore,
multiplying $\theta \in \mu_3(\Rbar)$ by $w(T)$ for some $T \in E[3]$,
we may assume that $\theta \in \mu_3(R)$. In other words, $\theta$
itself and not just $\partial \theta$ is Galois equivariant.  Then,
replacing $\gamma$ and $\rho$ by $\gamma \theta$ and $\rho
\partial \theta$, we see that the conditions of
Definition~\ref{compatible} are still satisfied, but
now~(\ref{nicecuberoot}) holds for the new $s$.
\end{proof}

\begin{corollary}
\label{choiceofroot}
In the case $p=3$, let $r$ and $s$ be as described in
Proposition~\ref{cuberoot}. Then Theorem~\ref{localpairing} holds with
$\alpha'=\Tr_{M/L'}(\alpha) - 3s$.
\end{corollary}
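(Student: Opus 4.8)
The plan is to assemble Propositions~\ref{p=3} and~\ref{cuberoot} with the reduction already used to prove Theorem~\ref{localpairing}. Recall that, via Lemma~\ref{field extn}, the first assertion of Theorem~\ref{localpairing} was reduced to the special case Theorem~\ref{mainthm2}, in which $\alpha'$ is the element of $L'$ determined by $\delta_{\lin}^3 = \alpha'(\lin)\delta_\zeroE$ for a choice of compatible representatives $\alpha \in L^\mult$, $\rho \in (R\otimes_K R)^\mult$ of $w_1(\xi)$ and $w_2(\xi)$. By Proposition~\ref{csa}, the quantity $\varphi_K(\alpha) = \inv_K(A_\rho)$ depends only on $w_1(\xi)$, so it is unchanged if $\rho$ is replaced by any other representative of $w_2(\xi)$ that is compatible with $\alpha$; only the intermediate element $\alpha'$ depends on this choice. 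Hence it suffices to exhibit one compatible $\rho$ for which the resulting $\alpha'$ equals $\Tr_{M/L'}(\alpha) - 3s$.

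First I would invoke Proposition~\ref{cuberoot}(ii): given $r \in L^+$ and $s \in L'$ satisfying the conditions of Proposition~\ref{cuberoot}(i), there is a representative $\rho \in (R\otimes_K R)^\mult$ of $w_2(\xi)$ compatible with $\alpha$ such that $s(\lin) = \rho(S_1,S_2)\rho(S_3,-S_3)$ for every $\lin = \{S_1,S_2,S_3\} \in \Lambda$. For this $\rho$, Proposition~\ref{p=3} gives $\delta_{\lin}^3 = \alpha'(\lin)\delta_\zeroE$ with
\[ \alpha'(\lin) = \alpha(S_1) + \alpha(S_2) + \alpha(S_3) - 3\,\rho(S_1,S_2)\rho(S_3,-S_3) = \Tr_{M/L'}(\alpha)(\lin) - 3\, s(\lin), \]
so that $\alpha' = \Tr_{M/L'}(\alpha) - 3s$ as elements of $L'$. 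Substituting this into Theorem~\ref{mainthm2} (applied, as in the reduction, over the field $L_j(\zeta_p)$ for each $j$) and combining with Lemma~\ref{field extn} yields Theorem~\ref{localpairing} with $\alpha' = \Tr_{M/L'}(\alpha) - 3s$.

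The one point that deserves attention is that $\alpha'$, unlike $\varphi_K(\alpha)$, is not intrinsic to $w_1(\xi)$: it genuinely depends on the chosen compatible $\rho$, and different admissible choices alter $s$ by an element of $\mu_3(L')$ (consistently with $s^3 = N_{M/L'}(\alpha)$ being fixed). Thus the real content of the corollary is that the particular cube root of $N_{M/L'}(\alpha)$ singled out by $r$ and $s$ in Proposition~\ref{cuberoot}(i) is realised by some admissible $\rho$, which is exactly what Proposition~\ref{cuberoot}(ii) provides; the main obstacle was therefore already dispatched in the proof of that proposition via the exact sequence~(\ref{exseq3}) and the injectivity of $w_1$. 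Once that is granted, nothing further is required beyond rewriting the formula of Proposition~\ref{p=3} using $s(\lin) = \rho(S_1,S_2)\rho(S_3,-S_3)$ and $\sum_{S \in \lin}\alpha(S) = \Tr_{M/L'}(\alpha)(\lin)$.
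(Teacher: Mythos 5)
Your proof is correct and follows essentially the same route as the paper: take the compatible representative $\rho$ furnished by Proposition~\ref{cuberoot}(ii) so that $s(\lin)=\rho(S_1,S_2)\rho(S_3,-S_3)$, and substitute into the formula of Proposition~\ref{p=3} to get $\alpha'=\Tr_{M/L'}(\alpha)-3s$. Your additional remarks on why the choice of compatible $\rho$ is harmless (since $\varphi_K$ depends only on $w_1(\xi)$) make explicit what the paper leaves implicit, but the argument is the same.
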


\begin{proof}
  Let $\rho\in(R \otimes_K R)^\mult$ be as described in part (ii) of
  Proposition~\ref{cuberoot}. Then for all
  $\lin=\{S_1,S_2,S_3\}\in\Lambda$, we have $s(\lin)=
  \rho(S_1,S_2)\rho(S_3,-S_3)$.  Using this $\rho$ in Proposition
  \ref{p=3} we get $\alpha'(\lin) = \sum_{i=1}^3 \alpha(S_i) - 3
  \rho(S_1,S_2)\rho(S_3,-S_3) =\Tr_{M/L'}(\alpha)(\lin)-3s(\lin)$.
\end{proof}

\begin{Remark}
  In the case where $[K(E[3]):K]$ is coprime to $3$, Lemma~\ref{field
    extn} allows us to reduce to the case where all the $3$-torsion is
  defined over $K$. If $S,T\in E[3]$ are a basis such that
  $e_3(S,T)=\zeta_3$, then we can choose $\gamma$ in
  Definition~\ref{compatible} such that for $0\leq a,b\leq 2$,
  $\gamma(aS+bT)=\gamma(S)^a\gamma(T)^b$. Consequently, we obtain
  $\iota(\alpha')(T)=\alpha(S)+\alpha(S+T)+\alpha(S-T)-3\gamma(S)\gamma(S+T)\gamma(S-T)=\alpha(S)N_{K(\gamma(T))/K}(1+\gamma(T)+\gamma(T)^2)$.
  Thus, the relevant Hilbert norm residue symbol is
  $\{\alpha(T),\alpha(S)\}$ and, for $n=3$, we recover the formula
  given in \cite{O`Neil} for the period-index obstruction with full
  level $n$-structure.
\end{Remark}

\section{Global computations}
\label{sec:glob}

In this section, $C \subset \PP^2$ will be a smooth plane cubic defined
over a number field $K$. We suppose that $C$ is everywhere locally
soluble.  We write ``$\summ$'' for the isomorphism $\Pic^0(C) \isom
E$, where $E$ is the Jacobian of $C$.  The {\em hyperplane section} of
$C$ (i.e. intersection of $C$ with a line) is a degree $3$ effective
$K$-rational divisor $H$ on $C$, defined up to linear equivalence. If
$H'$ is another degree $3$ effective $K$-rational divisor on $C$, then
the linear system $|H'|$ can be used to define a new embedding $C
\subset \PP^2$ with hyperplane section $H'$.

We are interested in the following problem.

\begin{Problem}
\label{prob:1}
Given a smooth plane cubic $C \subset \PP^2$ with hyperplane section
$H$, and a point $P \in E(K)$, find equations for an embedding $C \to
\PP^2$ whose image is a smooth plane cubic with hyperplane section
$H'$ satisfying $\summ(H' - H) = P$.
\end{Problem}

As described in the proof of \cite[Lemma 1]{SD}, the $K$-rational
effective divisors $H'$ in the required linear equivalence class
correspond to the $K$-rational points on a certain Brauer-Severi
surface $V$. Since $C$ is everywhere locally soluble, so is $V$. By
the Hasse principle for Brauer-Severi varieties we know that $V(K)
\not= \emptyset$, and so $H'$ exists.  Writing down equations for $V$
and then searching for a $K$-rational point is unlikely to be
practical.  We therefore take a different
approach. 

First, we explain how a solution to Problem~\ref{prob:1} helps us
compute the Cassels--Tate pairing.  In Section~\ref{sec:CTpair}, we
take $\zeroE \not= T \in E[3]$ and, after extending our field $K$ so
that $T \in E(K)$, aim to compute $f_T \in K(E)$ with $\divv(f_T) = 3
\aaa_T$ and $\summ(\aaa_T)=T$.  Solving Problem~\ref{prob:1} with
$P=T$ gives us $\aaa_T$ in the form $H'-H$, and from this we can
compute $f_T$.  To say a little about what $f_T$ looks like, we write
$K[x,y,z]_d$ for the space of homogeneous polynomials of degree $d$,
and $\CL(D)$ for the Riemann-Roch space of a divisor $D$.  We also
suppose, for definiteness, that $H = C \cap \{x= 0\}$.  It is known
(see for example \cite[Theorem 7.3.1]{BL}) that for any $d \ge 1$ the
map \[ K[x,y,z]_d \to \CL(dH) \, ; \quad f \mapsto f/x^d \] is
surjective. Taking $d=3$ shows we can write $f_T$ in the form
$f_1/x^3$ where $f_1$ is a ternary cubic meeting $C$ in divisor $3
H'$. By changing our choice of hyperplane section $H$, we could
replace the denominator by the cube of any linear form.

We assume $P \not= \zeroE$ (otherwise Problem~\ref{prob:1} is
trivial).  The curve $C$ may be embedded in $\PP^2$ using either the
linear system $|H|$ or the linear system $|H'|$.  The first of these
gives the embedding we started with. Taking both embeddings together
gives a map $C \to \PP^2 \times \PP^2$.  The image is defined by three
bi-homogeneous forms of degree $(1,1)$. The coefficients may
conveniently be arranged as a $3 \times 3 \times 3$ cube.  These cubes
have many fascinating properties. We first learnt of these from work
of Bhargava and O'Neil (unpublished) and Bhargava and Ho \cite{BH}.
See also \cite{DG}, \cite{HoThesis}, \cite{Ng}.

If we arrange the coefficients of a $3 \times 3 \times 3$ cube into
three $3 \times 3$ matrices, say $M_1, M_2, M_3$, then
\begin{equation}
\label{slice}
F(x,y,z) = \det(x M_1 + y M_2 + z M_3) 
\end{equation}
is a ternary cubic. Since we can slice the cube in three different
directions, this gives us three different ternary cubics. As shown in
\cite[Theorem 1]{Ng}, two of these define the image of $C$ under the
embeddings corresponding to $H$ and $H'$. Moreover an isomorphism
between these two plane cubics is given by the the $2 \times 2$ minors
of the matrix of linear forms in (\ref{slice}). We can then adopt the
point of view in Problem~\ref{prob:1}, namely that we have one curve
with two different embeddings in $\PP^2$.

We are therefore interested in the following problem.

\begin{Problem}
\label{prob:2}
Given a non-singular ternary cubic $F \in K[x,y,z]$, find 
matrices $M_1,M_2,M_3 \in \Mat_3(K)$ satisfying
\[ F(\alpha,\beta,\gamma) = \det(\alpha M_1 + \beta M_2 + \gamma M_3). \]
\end{Problem}

This problem is also considered in \cite{DG}, where an application to coding theory is suggested.

We label the coefficients of $F$ by putting
\begin{align*}
F(x,y,z) & =  a x^3 + b y^3 + c z^3 + a_2 x^2 y + a_3 x^2 z \\
& ~\qquad \quad + \, b_1 x y^2 
+ b_3 y^2 z + c_1 x z^2  + c_2 y z^2 + m x y z.
\end{align*}
By a change of co-ordinates, we may assume $c = F(0,0,1) \not= 0$.  Let
$A_F$ be the free associative $K$-algebra on two indeterminates $x$
and $y$ subject to the relations deriving from the formal identity in
$\alpha$ and $\beta$,
\[ F(\alpha,\beta,\alpha x+ \beta y) = 0.  \] 
Explicitly, $A_F = K\{x,y\}/I$ where $I$ is the ideal generated by the
elements
\begin{align*}
& c x^3 + c_1 x^2 + a_3 x + a,\\
& c (x^2 y + x y x + y x^2) + c_1 (x y + y x) + c_2 x^2 + m x + a_3 y + a_2,\\
& c (x y^2 + y x y + y^2 x) + c_2 (x y + y x) + c_1 y^2 + m y + b_3 x + b_1,\\
& c y^3 + c_2 y^2 + b_3 y + b. 
\end{align*}

In solving Problem~\ref{prob:2}, we are free to multiply $F$ through by
a scalar. If we scale so that $F(0,0,1) = -1$, then without loss of
generality $M_3 = -I_3$.
 
\begin{lemma}
\label{lem:alg}
Let $F \in K[x,y,z]$ be an irreducible ternary cubic with $F(0,0,1)
\not= 0$, and let $M_1, M_2 \in \Mat_3(K)$. The following are
equivalent.
\begin{enumerate}
\item $F(\alpha,\beta,\gamma) = \lambda \det (\alpha M_1 + \beta M_2 -
  \gamma I_3)$ for some $\lambda \in K^\mult$.
\item There is a $K$-algebra homomorphism $A_F \to \Mat_3(K)$ with $x
  \mapsto M_1$ and $y \mapsto M_2$.
\end{enumerate}
\end{lemma}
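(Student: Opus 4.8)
The plan is to translate condition (ii) into a single polynomial identity satisfied by the matrix of linear forms $N := \alpha M_1 + \beta M_2$, and then to match that identity with (i) using the Cayley--Hamilton theorem. First I would set up the dictionary. By the universal property of the free algebra there is a unique $K$-algebra homomorphism $K\{x,y\} \to \Mat_3(K)$ with $x \mapsto M_1$ and $y \mapsto M_2$, and it factors through $A_F = K\{x,y\}/I$ exactly when the four generators of $I$ lie in its kernel. These generators are the coefficients of $\alpha^3,\alpha^2\beta,\alpha\beta^2,\beta^3$ in $F(\alpha,\beta,\alpha x + \beta y)$; since $\alpha$ and $\beta$ are central and the homomorphism carries $\alpha x + \beta y$ to $N$, the image of $F(\alpha,\beta,\alpha x + \beta y)$ is exactly $F(\alpha,\beta,N) := c N^3 + \ell(\alpha,\beta) N^2 + q(\alpha,\beta) N + h(\alpha,\beta) I_3$, where we write $F(\alpha,\beta,\gamma) = c\gamma^3 + \ell\gamma^2 + q\gamma + h$ with $\ell,q,h \in K[\alpha,\beta]$. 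Hence (ii) is equivalent to the identity $F(\alpha,\beta,N) = 0$ in $\Mat_3(K)[\alpha,\beta]$.

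For (i) $\Rightarrow$ (ii): condition (i) reads $F(\alpha,\beta,\gamma) = \lambda \det(\alpha M_1 + \beta M_2 - \gamma I_3) = -\lambda \det(\gamma I_3 - N)$, so $F(\alpha,\beta,\gamma) = -\lambda\, \chi_N(\gamma)$, where $\chi_N$ is the characteristic polynomial of $N$ regarded as a matrix over the commutative ring $K[\alpha,\beta]$. Since Cayley--Hamilton holds over any commutative ring, $\chi_N(N) = 0$, hence $F(\alpha,\beta,N) = -\lambda\,\chi_N(N) = 0$, and (ii) follows from the dictionary.

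For (ii) $\Rightarrow$ (i): by the dictionary $F(\alpha,\beta,N) = 0$, so, setting $k = K(\alpha,\beta)$ and viewing $N \in \Mat_3(k)$, the monic degree-$3$ polynomial $g(\gamma) := c^{-1}F(\alpha,\beta,\gamma) \in k[\gamma]$ annihilates $N$. This is where irreducibility of $F$ enters: $F(\alpha,\beta,\gamma)$ is irreducible in $K[\alpha,\beta,\gamma]$ and, having leading coefficient $c \in K^\mult$ in $\gamma$, is primitive over the UFD $K[\alpha,\beta]$, so by Gauss's lemma it --- and hence $g$ --- is irreducible over $k$. The minimal polynomial of $N$ over $k$ divides $g$ and has degree $\ge 1$; as $g$ is irreducible of degree $3$, the minimal polynomial must equal $g$. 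A $3 \times 3$ matrix whose minimal polynomial has degree $3$ has characteristic polynomial equal to its minimal polynomial, so $\chi_N(\gamma) = g(\gamma) = c^{-1} F(\alpha,\beta,\gamma)$. Therefore $F(\alpha,\beta,\gamma) = c\, \det(\gamma I_3 - N) = -c\, \det(\alpha M_1 + \beta M_2 - \gamma I_3)$ as polynomials in $K[\alpha,\beta,\gamma]$, which is (i) with $\lambda = -c \in K^\mult$.

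The routine-looking but genuinely load-bearing step is the dictionary: one must check that substituting the non-commuting generators $x,y$ into the ``formal identity'' $F(\alpha,\beta,\alpha x + \beta y)=0$ and then applying the homomorphism really reproduces $F(\alpha,\beta,\cdot)$ evaluated at the \emph{single} matrix $N$ --- i.e. that only ordinary commuting powers of $N$ appear --- which works precisely because $\alpha$ and $\beta$ stay central. Once this is in place the two implications are short, the first resting on Cayley--Hamilton and the second on the fact that an irreducible cubic annihilating a $3\times 3$ matrix must be its characteristic polynomial.
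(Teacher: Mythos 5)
Your proof is correct, and your forward implication is exactly the paper's: condition (i) makes $\gamma \mapsto F(\alpha,\beta,\gamma)$ a scalar multiple of the characteristic polynomial of $\alpha M_1 + \beta M_2$, and Cayley--Hamilton (valid over the commutative ring $K[\alpha,\beta]$) yields the defining relations of $A_F$. For the converse, however, you take a genuinely different route. The paper specialises $(\alpha:\beta) \in \PP^1$: if the minimal polynomial of $\alpha M_1 + \beta M_2$ has degree $3$ for infinitely many specialisations it interpolates the resulting identities of cubic forms; otherwise it produces a common eigenvector of $M_1$ and $M_2$, deduces that $\{F=0\}$ contains a line, and contradicts irreducibility. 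You instead work once and for all over the function field $K(\alpha,\beta)$, using Gauss's lemma (with primitivity supplied by $c = F(0,0,1) \in K^\mult$) to see that $c^{-1}F(\alpha,\beta,\gamma)$ is irreducible there, so that the minimal polynomial of the generic matrix $N$ is forced to equal its characteristic polynomial. Your version is shorter, avoids the case split, and pins down $\lambda = -c$ explicitly. It also uses only irreducibility of $F$ over $K$: the line produced by the paper's eigenvector argument is a priori defined only over $\Kbar$, so that argument strictly speaking invokes absolute irreducibility (harmless in context, where $F$ cuts out a smooth cubic, but your route proves the lemma exactly as stated). What the paper's argument buys in exchange is that it stays over $K$ throughout and its degenerate case is geometrically transparent. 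Your preliminary ``dictionary'' --- that the four generators of $I$ are the coefficients of $\alpha^3, \alpha^2\beta, \alpha\beta^2, \beta^3$ in $F(\alpha,\beta,\alpha x + \beta y)$, so that (ii) amounts to the identity $F(\alpha,\beta,\alpha M_1 + \beta M_2) = 0$ --- is implicit in the paper and is checked correctly; it is indeed the load-bearing step you identify, since centrality of $\alpha,\beta$ is what collapses the non-commutative expansion to powers of the single matrix $N$.
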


\begin{proof}
  If (i) holds then $\gamma \mapsto F(\alpha,\beta,\gamma)$ is a
  scalar multiple of the characteristic polynomial of $\alpha M_1 +
  \beta M_2$.  So, by the Cayley-Hamilton theorem,
\begin{equation}
\label{m1m2}
F(\alpha,\beta,\alpha M_1 + \beta M_2) = 0.
\end{equation}
Therefore, $M_1$ and $M_2$ satisfy the relations used to define $A_F$.
This proves (ii). If the minimal polynomial of $\alpha M_1 + \beta
M_2$ has degree $3$ for infinitely many $(\alpha:\beta) \in \PP^1$
then the converse is clear. Otherwise, after replacing $M_1$ and $M_2$
by suitable linear combinations, neither has minimal polynomial of
degree $3$. So $M_1$ and $M_2$ each have an eigenspace of dimension at
least $2$. Since these eigenspaces have non-trivial intersection, it
follows by~(\ref{m1m2}) that $\{F=0\} \subset \PP^2$ contains a line.
This contradicts that $F$ is irreducible.
\end{proof}

We have now reduced Problem~\ref{prob:2} to finding a $K$-algebra
homomorphism $A_F \to \Mat_3(K)$.  Although the connection with
Problem~\ref{prob:2} is new, the algebra $A_F$ was previously studied
by Kuo \cite{Kuo}.  She showed that $A_F$ is an Azumaya algebra of rank
$9$ over its centre $Z(A_F)$, and that $Z(A_F)$ is isomorphic to the
co-ordinate ring of the affine curve $E \setminus \{\zeroE\}$, where
$E$ is the Jacobian of $C = \{ F= 0\} \subset \PP^2$.  In particular
we can specialise $A_F$ at any non-zero point of $E$ to obtain a
central simple algebra of dimension $9$ over the field of definition
of that point.  In fact, Kuo only considered the special case $c=1$ and
$a_3 = b_3 = c_1 = c_2 = 0$, but the general case follows by making
suitable changes of co-ordinates.

We put $r = y (c x^2 + c_1 x + a_3)$, $s = -(c y^2 + c_2 y + b_3)$ and
$t = c x$.  Then, using the support for finitely presented algebras in
Magma \cite{magma}, we were able to check that the centre $Z(A_F)$ is
generated by\footnote{In fact, the elements $\delta_1$ and $\delta_2$
  in the proof of \cite[Lemma 2.1]{Kuo} are equal, and $\xi + m^2/3$
  specialises to $\delta_1 = \delta_2 = \delta/2$}
\begin{align*}
  \xi &= c^2 (x y)^2 - (c y^2 + c_2 y + b_3) (c x^2 + c_1 x + a_3) +
  (c m - c_1 c_2) x y + a_3 b_3
\end{align*}
and 
\begin{align*}
  \eta &= r s t + s t r + t r s
  + a_2 (s t + t s) + b_3 (t r + r t) + c_1 (r s + s r) \\
  & \qquad + (b_3 c_1 - b_1 c) r + (c_1 a_2 - c_2 a) s + (a_2 b_3 -
  a_3 b) t - 6 a b c + a_2 b_3 c_1.
\end{align*}
Moreover, the elements $\xi$ and $\eta$ satisfy
\begin{equation}
\label{jac3}
\eta^2 + A_1 \xi \eta + A_3 \eta = \xi^3 + A_2 \xi^2 + A _4 \xi + A_6, 
\end{equation}
where 
\begin{equation*}
  \label{jac3gen}
  \begin{aligned} \smallskip
    A_1 & =  m, \\ \smallskip
    A_2 & =  -(a_2 c_2+a_3 b_3+b_1 c_1), \\ \smallskip
    A_3 & =  9 a b c - (a b_3 c_2 + b a_3 c_1 + c a_2 b_1)
                 - (a_2 b_3 c_1 + a_3 b_1 c_2), \\
    A_4 & =  -3 (a b c_1 c_2 + a c b_1 b_3  + b c a_2 a_3) \\
        &    \quad + \, a (b_1 c_2^2 + b_3^2 c_1) 
                 + b (a_2 c_1^2 + a_3^2 c_2) 
                 + c (a_2^2 b_3 + a_3 b_1^2) \\ \smallskip
        &    \quad + \, a_2 c_2 a_3 b_3 + b_1 c_1 a_2 c_2 + a_3 b_3 b_1 c_1, \\
    A_6 & =  -27 a^2 b^2 c^2 
               + 9 a b c (a b_3 c_2 + c a_2 b_1 + b a_3 c_1)
               + \ldots +  a b c m^3. 
  \end{aligned}
\end{equation*}
The polynomials $A_i \in \Z[a,b,c, \ldots, m]$ are the coefficients of
the Weierstrass equation for the Jacobian specified in~\cite{ARVT}.
These were obtained by modifying the classical formulae in
\cite{AKM3P}.

Let $\zeroE \not= P = (x_P,y_P) \in E(K)$. Then the specialisation
$A_{F,P}$ of $A_F$ at $P$ is the quotient of $A_F$ by the extra
relations $x_P = \xi$ and $y_P = \eta$. By the work of Kuo cited
above, $A_{F,P}$ is a central simple algebra over $K$ of dimension
$9$. It therefore represents an element in $\Br(K)[3]$.  Kuo also
shows that if $C = \{F=0\} \subset \PP^2$ has a $K$-rational point,
then the Azumaya algebra $A_F$ splits.  By our assumption that $C$ is
everywhere locally soluble, and the local-to-global principle for the
Brauer group, it follows that $A_{F,P} \isom \Mat_3(K)$.  If we can
find such an isomorphism then this immediately gives us a $K$-algebra
homomorphism $A_F \to \Mat_3(K)$ and hence, by Lemma~\ref{lem:alg}, a
solution to Problem~\ref{prob:2}.

The following lemma shows that the point $P$ in the statement of
Problem~\ref{prob:1}, and the point $P$ in the above solution to
Problem~\ref{prob:2} are the same.

\begin{lemma}
  Suppose we solve Problem~\ref{prob:2} by finding an isomorphism
  $A_{F,P} \isom \Mat_3(K)$ for some $\zeroE \not=P \in E(K)$.  Then
  the $3 \times 3 \times 3$ cube we obtain defines a genus one curve
  $C \subset \PP^2 \times \PP^2$ whose projections onto each factor
  are plane cubics with hyperplane sections $H$ and $H'$ satisfying
  $\summ(H-H') = P$.
\end{lemma}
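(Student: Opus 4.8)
The plan is to identify the second hyperplane section $H'$ with the cokernel line bundle of the $3\times 3$ matrix of linear forms produced by the solution, and then to match the resulting class in $E$ with the point $P$ used to specialise $A_F$. Unwinding the construction, the isomorphism $A_{F,P}\cong\Mat_3(K)$ is in particular a $K$-algebra homomorphism $\phi\colon A_F\to\Mat_3(K)$ with $\phi(\xi)=x_P$ and $\phi(\eta)=y_P$, and Lemma~\ref{lem:alg} turns it into matrices $M_1=\phi(x)$, $M_2=\phi(y)$, $M_3=-I_3$ with $\det(xM_1+yM_2+zM_3)=\lambda F$ for some $\lambda\in K^\mult$; these $M_i$ are the three matrices forming the cube. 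Writing $N=N(x,y,z)=xM_1+yM_2+zM_3$, the curve $\widetilde C\subset\PP^2\times\PP^2$ is the one cut out by the three bilinear forms read off from $N$, and by \cite[Theorem~1]{Ng} its two projections are plane cubics isomorphic to $\widetilde C$: one is the original embedding by $|H|$, which returns $\{F=0\}$, and the other an embedding $C\hookrightarrow\PP^2$ by a complete linear system $|H'|$ of $K$-rational effective divisors of degree $3$, the isomorphism between them being realised by the $2\times 2$ minors of $N$. So it suffices to prove $\summ(H-H')=P$.

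Next I would pin down $H'$ using the classical theory of linear determinantal representations of plane curves. Since $\det N=\lambda F$, restricting $N$ to $C$ gives a homomorphism $\mathcal O_C(-1)^3\to\mathcal O_C^3$ of generic rank $2$, whose cokernel is a line bundle $\mathcal L$ on $C$; an Euler-characteristic count gives $\deg\mathcal L=3$, and $\mathcal L\not\cong\mathcal O_C(H)$ (equivalently $h^0(\mathcal L\otimes\mathcal O_C(-H))=0$), as always holds for a linear determinantal representation. The embedding of $C$ defined by the three sections of $\mathcal L$ coming from $\mathcal O_C^3$ is exactly the second projection of $\widetilde C$ (or that of its Beauville dual $\coker(N^{t}|_C)$, whichever \cite[Theorem~1]{Ng} singles out); this is precisely how that theorem matches up with Beauville's correspondence. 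Hence $\mathcal O_C(H')\cong\mathcal L$ and $\summ(H'-H)=\summ(\mathcal L)-\summ(\mathcal O_C(H))=:Q$.

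It then remains to show $Q=-P$. On one side, Kuo's theorem identifies the centre $Z(A_F)$ with the coordinate ring of $E\setminus\{\zeroE\}$, with $\xi,\eta$ (after the shift $\xi\mapsto\xi+m^2/3$ already used in the text) the affine Weierstrass coordinates of the model of \cite{ARVT}; so $\phi$ restricted to $Z(A_F)$ is the $K$-point $P$ of $\operatorname{Spec}Z(A_F)=E\setminus\{\zeroE\}$. On the other side, carrying through Kuo's and Haile's construction of $A_F$ as the generalised Clifford algebra of $F$ and its module theory over the Jacobian (\cite{Kuo}, \cite{Haile}), a homomorphism $A_F\to\Mat_3(K)$ corresponds to a rank $3$ module whose restriction to $C$ is precisely the cokernel bundle $\mathcal L$ above, and the point of $E\setminus\{\zeroE\}$ through which the homomorphism factors is $-\summ(\mathcal L-\mathcal O_C(H))$, the sign being fixed by Kuo's normalisation. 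Comparing the two descriptions gives $Q=-P$, hence $\summ(H-H')=P$ as claimed.

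The hard part is exactly this last step: making the dictionary between the point of $E\setminus\{\zeroE\}$ appearing in $A_{F,P}$ and the class $\summ(\mathcal L-\mathcal O_C(H))$ of the cokernel line bundle completely precise, with the correct sign and with the correct choice of Beauville-dual. I expect to settle this either by pushing the identification through the constructions of \cite{Kuo} and \cite{Haile}, or — since every object involved is explicit — by verifying it on a single worked example, for instance with the Magma formulae accompanying the paper. Everything else (exactness of the determinantal sequence on $C$, the degree computation $\deg\mathcal L=3$, and the statement that the $2\times 2$ minors realise the isomorphism) is routine.
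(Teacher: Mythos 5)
There is a genuine gap, and it sits exactly at the decisive step. Your reduction of the lemma to the statement ``the specialisation point of $\phi|_{Z(A_F)}$ equals $-\summ(\CL-\OO_C(H))$, with the sign fixed by Kuo's normalisation'' is not a proof of the lemma but a restatement of it: everything before that point (the cokernel line bundle $\CL$ of the determinantal representation has degree $3$, the second projection is the embedding by $|\CL|$, the $2\times2$ minors give the isomorphism) is standard Beauville/Ng material and does not distinguish $P$ from $-P$ or from $P$ translated by a point of order dividing $3$, whereas the whole content of the lemma is that the point appearing in $A_{F,P}$ and the class $\summ(H-H')$ match on the nose. Neither \cite{Kuo} nor \cite{Haile} states this dictionary in the form you need, so ``pushing the identification through their constructions'' is genuinely the hard work, not a routine citation; as written, the proposal proves only that $\summ(H-H')$ is \emph{some} universal function of $P$.

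Your fallback --- verify the identification on a single explicit example --- is in fact what the paper does, and it is the whole proof: one works over $\Kbar$, changes coordinates so that $C=E$ is in Weierstrass form $y^2+a_3y=x^3+a_2x^2+a_4x$ with $P=(0,0)$, writes down an explicit $K$-algebra homomorphism $A_{F,P}\to\Mat_3(K)$ (checking $\xi\mapsto 0$, $\eta\mapsto 0$), reads off the three bi-homogeneous forms, and observes that they cut out the image of $(x,y)\mapsto\bigl((1:y:x),\,(1:-(y+a_3)/x:x)\bigr)$, whence $H=3.\zeroE$ and $H'=2.\zeroE+P$. If you take this route you must still justify why one example suffices: the statement is geometric so may be checked over $\Kbar$; every pair $(C,P)$ can be brought to the above normal form by a linear change of coordinates, under which the construction of $A_F$, $\xi$, $\eta$ is covariant; and by Skolem--Noether any two isomorphisms $A_{F,P}\otimes\Kbar\isom\Mat_3(\Kbar)$ differ by conjugation, which only changes coordinates on the second $\PP^2$ and so does not affect the linear equivalence class of $H'$. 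Without that rigidity argument a single worked example establishes nothing; with it, the example is the proof.
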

\begin{proof}
  For the proof, we may work over an algebraically closed field, and
  change coordinates so that $C=E$ is an elliptic curve in Weierstrass
  form. Moving the point $P$ to $(x,y)=(0,0)$, we may assume that $E$
  has Weierstrass equation
\begin{equation}
\label{Weqn}
 y^2 + a_3 y = x^3 + a_2 x^2 + a_4 x.
\end{equation}
The image of $(x,y) \mapsto (1:y:x)$ is defined by the ternary cubic
\[ F(x,y,z) = x y^2 + a_3 x^2 y - z^3 - a_2 x z^2 - a_4 x^2 z. \]
Using~(\ref{jac3}) to compute the Jacobian, we recover the Weierstrass
equation~(\ref{Weqn}). Then $A_{F,P} \isom \Mat_3(K)$ via
\[ x \mapsto \begin{pmatrix} 0 & 0 & 1 \\ -a_3 & 0 & 0 \\ -a_4 & 0 &
  -a_2 \end{pmatrix}, \qquad y \mapsto \begin{pmatrix} 0 & 0 & 0 \\ -1
  & 0 & 0 \\ 0 & -1 & 0 \end{pmatrix}. \] 
In particular, we check that $\xi \mapsto 0$ and $\eta \mapsto 0$.  The
images of $x, y$ and $-1$ in $\Mat_3(K)$ form a $3 \times 3 \times 3$
cube. Let $F_i$ be the bi-homogeneous form whose coefficients are
given by the $i$th rows of these matrices, as follows,
\begin{align*}
F_1(x_1,y_1,z_1;x_2,y_2,z_2) &= -z_1 x_2 + x_1 z_2, \\
F_2(x_1,y_1,z_1;x_2,y_2,z_2) &= -a_3 x_1 x_2 - y_1 x_2 - z_1 y_2, \\
F_3(x_1,y_1,z_1;x_2,y_2,z_2) &= -a_4 x_1 x_2 - y_1 y_2 - a_2 x_1 z_2 - z_1 z_2.
\end{align*}
Then $F_1$, $F_2$, $F_3$  define the image of $E \to \PP^2 \times \PP^2$ via
\[ (x,y) \mapsto ((1:y:x), (1:-(y+a_3)/x:x)). \]
Projecting onto each factor gives two embeddings $E \subset \PP^2$ with 
hyperplane sections $H = 3.\zeroE$ and $H' = 2.\zeroE + P$. 
In particular, ${\summ}(H' - H) = P$.
\end{proof}

We have now reduced Problems~\ref{prob:1} and~\ref{prob:2} to the
following problem.

\begin{Problem}
\label{prob:3}
Let $K$ be a number field. Given structure constants for a $K$-algebra
$A$ known to be isomorphic to $\Mat_3(K)$, find such an isomorphism
explicitly.
\end{Problem}

We briefly discuss two algorithms for solving this problem.

\subsection*{Norm equations} By a theorem of Wedderburn (see
\cite[Theorem~2.9.17]{J}), every central simple algebra of dimension
$9$ is a cyclic algebra.  By following the proof (see~\cite{GHPS}
or~\cite{H} for details), Problem~\ref{prob:3} reduces to that of
solving a norm equation for a cyclic cubic extension $L/K$.
Algorithms for solving norm equations do exist (see
\cite[Section~7.5]{CohenGTM193}), but as they involve computing the
class group and units for $L$, they are rarely practical in the
applications of interest to us.

\subsection*{Minimisation and reduction} 
This approach was first suggested by M. Stoll, but with an ad hoc
approach to the reduction. The ``minimisation'' stage is to compute a
maximal order $\OO$ in $A$, using the algorithm in \cite{IR},
\cite{R}. For the ``reduction'' stage we compute trivialisations $A
\otimes_K K_v \isom \Mat_3(K_v)$ for each infinite place $v$, and use
this to embed $\OO$ as a lattice in a Euclidean space of dimension
$\dim_\Q(A)= 9[K:\Q]$.  We then search for a zero-divisor in $A$ by
looking at short vectors in this lattice. Once a zero-divisor is found,
it is easy to find an isomorphism $A \isom \Mat_3(K)$, as described
for example in \cite[Section~5]{GHPS}. In \cite[Paper~III,
Section~6]{descsum} it is shown that if $K=\Q$ then the shortest
vector in the lattice is a zero-divisor. In practice, a zero-divisor
can then be found by the LLL algorithm. In \cite{IRS}, a
complexity-theoretic result is proved describing the behaviour of the
algorithm over a general number field.  The algorithm is only
practical if the discriminant of $K$ is sufficiently small.

\section{Example}
\label{sec:ex}

In this section, we illustrate our work by computing the Cassels--Tate
pairing on the $3$-Selmer group of the elliptic curve 17127b1 in
\cite{CrTables}. This elliptic curve $E/\Q$ has Weierstrass equation
\begin{equation}
\label{weierE}
  y^2 + x y + y = x^3 - x^2 - 19163564 x - 34134737802. 
\end{equation}
The Galois representation $\rho_{E,3} : \Gal(\Qbar/\Q) \to
\GL_2(\Z/3\Z)$ is surjective.  Therefore, the \'etale algebras $L$,
$L'$ and $M$, defined in Section~\ref{sec:CTpair}, are fields. We find
that $L = \Q(u)$ and $L' = \Q(v)$, where $u$ and $v$ are roots of $X^8
- 5 X^6 + 6 X^4 - 3 = 0$ and $X^8 - 6 X^4 + 19 X^2 - 3 = 0$. Moreover,
$M = L(\theta)$ where $\theta^3 = 2 u^6 - 6 u^4 - 3 u^2 + 1$.  The
isomorphism $\iota : L'(\zeta_3) \isom L(\zeta_3)$ and embedding $L'
\subset M$ are given by
\begin{equation}
\label{embs}
\begin{aligned}
  v &\mapsto
  \tfrac{1}{3}(2 \zeta_3 + 1)(u^7 - 4 u^5 + u^3 + 3 u), \\
  v & \mapsto \tfrac{1}{3}(2 u^5 - 7 u^3) \theta^{-1} +
  \tfrac{1}{3}(u^7 - 4 u^5 + u^3 + 3 u).
\end{aligned}
\end{equation}

The bad primes of $E$ are $3$, $11$ and $173$.  Let $\Sset$ the set of
primes of $L$ dividing these primes, and
\[ L(\Sset,3) = \{ x \in L^\mult/(L^\mult)^3 : \ord_\pp(x) \equiv 0
\!\! \pmod{3} \text{ for all } \pp \not\in \Sset \}. \] By
Lemma~\ref{lem:finite}(i), we have
\[S^{(3)}(E/\Q) \subset L(\Sset,3) \cap \Image(w_1).\] We find that
$L(\Sset,3) \cap \Image(w_1) \isom (\Z/3\Z)^3$ is generated by
\begin{align*}
\alpha &= \tfrac{1}{2} (u^7 + u^6 - 4 u^5 - 3 u^4 + 2 u^3 + 1), \\ 
\beta  &= \tfrac{1}{2} (u^7 - 6 u^5 + 10 u^3 + 3 u^2 - 3 u - 5), \\
\gamma &= \tfrac{1}{2} (632 u^7 - 142 u^6 - 2275 u^5
                      + 642 u^4 + 629 u^3 - 720 u^2 + 1059 u - 625),
\end{align*}
and that $S^{(3)}(E/\Q) \isom (\Z/3\Z)^2$ is the subgroup generated by
$\alpha$ and $\gamma$. Moreover, for each of the primes $p=3,11,173$,
we find that $H^1(\Q_p,E[3]) \isom (\Z/3\Z)^2$ is generated by the
images of $\beta$ and $\gamma$.

Let $\alpha' = \Tr_{M/L'}(\alpha) - 3 N_{M/L'}(\alpha)^{1/3}$. Since
$\mu_3 \not\subset L'$, there is no ambiguity in the choice of cube
root. Explicitly,
\[ \alpha' = \tfrac{1}{6} (2 v^7 - 2 v^6 + v^5 
                  - v^4 - 10 v^3 + 19 v^2 + 48 v - 21).  \]

Factoring into prime ideals in $\OO_L$, we find
\begin{align*}
  11 \OO_L &= \pp_1 \pp_2 \pp_3^3, && N \pp_1 = N \pp_2 =11,
  \quad N \pp_3 = 11^2,  \\
  173 \OO_L &= \qq_1 \qq_2 \qq_3 \qq_4 \qq_5, && N \qq_1 = N \qq_2
  =173, \quad N \qq_3 = N \qq_4 = N \qq_5 = 173^2.
\end{align*}
For $p=11$, $173$ we work with the embeddings $L \subset \Q_p$
corresponding to $\pp_1$ and $\qq_1$. In other words, for both $p=11$
and $p=173$, we choose a torsion point $\zeroE \not= T \in E[3]$
defined over $\Q_p$.  By~(\ref{embs}), this also gives an embedding $L'
\subset \Q_p(\zeta_3)$.  Let $\varphi_p = \varphi_{\Q_p}$ be as
defined in Section~\ref{sec:loc}.  Then, up to a global choice of
sign\footnote{This depends on the relationship between the
  embeddings~(\ref{embs}) and the Weil pairing.},
\[\varphi_p(\alpha) = \Ind_{\zeta_3} (\alpha,\alpha')_p,\] where
$\Ind_{\zeta_3}$ is the isomorphism $\mu_3 \isom \frac{1}{3}\Z/\Z$
sending $\zeta_3 \mapsto \frac{1}{3}$, and $(~,~)_p$ is the
$3$-Hilbert norm residue symbol on $\Q_p(\zeta_3)$.  By
Lemma~\ref{lem:finite}(i), Tate local duality and the product
formula~(\ref{prodform}), we have $\varphi_3(\alpha) +
\varphi_{11}(\alpha) + \varphi_{173}(\alpha)=0$. We use this relation
to compute $\varphi_3(\alpha)$ from $\varphi_{11}(\alpha)$ and
$\varphi_{173}(\alpha)$.  Repeating for $\alpha, \beta, \gamma,
\ldots$ we find that $\varphi_p$ takes values:
\[ \begin{array}{c|ccccccc}
p & \alpha & \beta & \gamma & \alpha \beta & \beta \gamma & \alpha \gamma
& \alpha \beta \gamma \\ \hline
3 & 0 & 1 & 0 & 0 & -1 & 0 & 1 \\
11 & 0 & -1 & 0 & 0 & 0 & 0 & 1 \\
173 & 0 & 0 & 0 & 0 & 1 & 0 & 1 
\end{array} \]
We have identified $\frac{1}{3}\Z/\Z \isom \Z/3\Z$ for readability.
The final column is not needed in what follows, but was computed as a
check on our calculations.  Recalling that $\varphi_p$ is a quadratic
form, we can now read off using~(\ref{sym-quad}) that the associated
symmetric bilinear form $[~,~]_p$ takes values: 
\[ 
\begin{array}{c|ccc} [~,~]_3 
& \alpha & \beta & \gamma \\ \hline
\alpha & 0 & -1 & 0 \\
\beta  &-1 & -1 & 1 \\
\gamma & 0 &  1 & 0 \end{array} \quad
\begin{array}{c|ccc} [~,~]_{11} 
& \alpha & \beta & \gamma \\ \hline
\alpha & 0 &  1 & 0 \\
\beta  & 1 &  1 & 1 \\
\gamma & 0 &  1 & 0 \end{array} \quad  
\begin{array}{c|ccc} [~,~]_{173} 
& \alpha & \beta & \gamma \\ \hline
\alpha & 0 &  0 & 0 \\
\beta  & 0 &  0 & 1 \\
\gamma & 0 &  1 & 0 \end{array} 
\]
 
These calculations are in agreement with the fact that, since
$\alpha,\gamma \in S^{(3)}(E/\Q)$, we have $[\alpha,\alpha]_p =
[\alpha,\gamma]_p = [\gamma,\gamma]_p = 0$ for all primes $p$. Since
the local pairing~(\ref{tatepairing}) is non-degenerate, we could also
have predicted in advance that $[\beta,\gamma]_p \not= 0$ for
$p=3,11,173$.

The Selmer group elements $\alpha$, $\gamma$, $\alpha \gamma$,
$\alpha/\gamma$ correspond to plane cubics $C_m$ for $m=1,\ldots,4$.
We used the algorithms in \cite{descsum}, implemented in Magma, to
compute the following equations for $C_m$.
\[ \hspace{-0.3em} \begin{aligned}
    12 x^3 + 7 x^2 y - x^2 z + 20 x y^2 - 99 x y z + 24 x z^2 
  + 43 y^3 + 13 y^2 z - 17 y z^2 + 80 z^3 &= 0 \\
    9 x^3 - 26 x^2 y - 7 x^2 z + 47 x y^2 - 25 x y z + 105 x z^2 
  + 16 y^3 + 47 y^2 z + 27 y z^2 + 54 z^3 &= 0 \\
    x^3 + 2 x^2 y - 15 x^2 z + 40 x y^2 - 11 x y z + 111 x z^2 
    + 8 y^3 + 91 y^2 z + 131 y z^2 + 344 z^3 &= 0 \\
    4 x^3 - 2 x^2 y - x^2 z - 9 x y^2 - 41 x y z + 97 x z^2 
    + 29 y^3 - 23 y^2 z +  257 y z^2 + 282 z^3 &= 0
\end{aligned} \hspace{-0.1em} \]
These equations have been minimised and reduced (see \cite{minred})
and so, in particular, the $C_m$ have the same primes of bad reduction
as $E$.

In each case $m =1,\ldots, 4$, we used the method in
Section~\ref{sec:glob} to compute a ternary cubic $f_m$ with
coefficients in $L$ meeting $C_m$ in 3 non-collinear points each with
multiplicity $3$. In our example, $L$ is a number field, but in general
a similar calculation is necessary over each constituent field of $L$.
The rational function $f_m/x^3$ has divisor $3 H' - 3 H$, where $H$ is
the hyperplane section and $H'$ is another effective divisor of
degree $3$.  Then $[H'-H] \in \Pic^0(C_m) \isom E$ is a non-zero
$3$-torsion point defined over $L$. In our example, there are only two
such points, say $\pm T$. We can switch the sign by replacing $f_m$ by
its $\Gal(L/L^+)$-conjugate. Determining the right choice of sign
takes some care; see Remark~\ref{rem:signs} below.

We scaled each $f_m$ so that (i) the rational function $f_m/x^3$ is as
described in Lemma~\ref{lem:scale-f}, (ii) the coefficients of $f_m$
are in $\OO_L$, and (iii) $f_m$ and the ternary cubic defining $C_m$
are linearly independent mod $\pp$ for all primes $\pp \not\in \Sset$.
In general, it might be necessary to enlarge $\Sset$ to achieve the
last of these conditions. By Lemma~\ref{lem:finite}, the only primes to
contribute to the pairing will be $p=3,11,173$.

The interested reader can find the $f_m$ in the accompanying Magma
file. We have also included the formula for $f_1$ in
Appendix~\ref{app}.

Evaluating each $f_m$ at a $\Q_p$-point\footnote{We were careful to
  choose points that are not $p$-adically close to the zeros of
  $f_m$.}  on $C_m$, we obtained the following elements of
$L_p^\mult/(L^\mult_p)^3$, where $L_p = L \otimes_\Q \Q_p$.
\[ \begin{array}{c|ccccc} p & f_1 & f_2 & f_3 & f_4 & \\ \hline
3 & \gamma & \beta \gamma & \beta \gamma^2 & \beta^2 \gamma^2  \\
11 & \gamma^2 &1 & \gamma & \gamma \\
173 & \beta & \beta^2 \gamma & 1 & \beta^2 \gamma^2  
\end{array} \]
Using the entries in the column headed $f_1$, we compute
\begin{equation}
\label{answer}
\begin{aligned}
  \langle \alpha,\alpha \rangle & = [\gamma,\alpha]_3 +
  [\gamma^2, \alpha]_{11} + [\beta,\alpha]_{173} = 0, \\
  \langle \alpha,\gamma \rangle & = [\gamma,\gamma]_3 + [\gamma^2,
  \gamma]_{11} + [\beta,\gamma]_{173} = 1.
\end{aligned}
\end{equation}
Repeating for $f_2,f_3,f_4$, the Cassels--Tate pairing is given by
\begin{equation}
  \label{ctp:table}
  \begin{array}{c|cccc} \langle~,~\rangle 
    & \alpha & \gamma & \alpha \gamma & \alpha/\gamma \\ \hline
    \alpha & 0 &  1 & 1 & -1 \\
    \gamma  & -1 &  0 & -1 & -1 \\
    \alpha \gamma & -1 &  1 & 0 & 1 \\
    \alpha/\gamma & 1 & 1 & -1 & 0 
\end{array} 
\end{equation}
This is in agreement with the fact that the pairing is bilinear and
alternating. Had we assumed these properties from the outset, it would
only have been necessary to compute one non-zero value of the pairing.
So the only reason for computing more than one of the $f_m$ was to
help check our calculations.

In conclusion, the Cassels--Tate pairing on $S^{(3)}(E/\Q) \isom
(\Z/3\Z)^2$ is non-zero, and hence non-degenerate.  It follows that
$\rank E(\Q)=0$ and the $3$-primary part of $\Sha(E/\Q)$ is
$(\Z/3\Z)^2$.  The first of these facts could more easily be checked
by $2$-descent. The second could have been checked using $9$-descent
(as described in \cite{creutz}), but our method has the advantage of
not requiring any class group and unit calculations beyond those
needed for the $3$-descent.
 
\begin{Remark}
\label{rem:signs}
Replacing $f_m$ by its $\Gal(L/L^+)$-conjugate has the effect of
changing the sign of every entry in the $m$th row
of~(\ref{ctp:table}). We now explain how we made these sign choices in
a consistent way.  We limit ourselves to a few brief details, since
for the applications in the last paragraph we only need that the
pairing is non-zero.

We fix a $3$-torsion point $T \in E(L)$, written in terms of the
Weierstrass equation~(\ref{weierE}).  Each plane cubic $C_m$
corresponds to a pair of inverse elements in $S^{(3)}(E/\Q)$. The
choice of sign could be fixed by specifying an isomorphism
$\Pic^0(C_m) \isom E$ or a covering map $C_m \to E$.  Instead, we
scale the ternary cubic defining $C_m$ so that it has the same
invariants $c_4$ and $c_6$ as~(\ref{weierE}). This scaling is unique
up to sign, and by \cite[Theorem 2.5]{testeqtc} the choice of sign
corresponds to that in $S^{(3)}(E/\Q)$.  By specialising the sign
$\pm$ to $+$ in \cite[Theorem 7.2]{testeqtc}, and using the torsion
point $T$ chosen above, we may scale the equations for the $C_m$ so
that they correspond to $\alpha, \gamma, \alpha \gamma, \alpha/\gamma
\in L^\mult/(L^\mult)^3$, rather than to the inverses of these
elements. Then, when computing the ternary cubic $f_m$ in
Section~\ref{sec:glob}, we work with the algebra $A_{F,P}$, where $F$
is the equation for $C_m$ we just fixed, and $P$ is the image of $T$
under the isomorphism between the elliptic curves~(\ref{weierE})
and~(\ref{jac3}) which when written in the form $x = u^2 x'+ r$, $y =
u^3 y' + u^2 s x' + t$ has $u=+1$.
\end{Remark}

\begin{Remark}
  We have only computed the Cassels--Tate pairing up to a global
  choice of sign. To compute it exactly, we would have to fix a sign
  convention for the Weil pairing and check that the
  embeddings~(\ref{embs}) are compatible with it.  We would also have
  to expand on Remark~\ref{rem:signs}.
\end{Remark}

\newpage

\appendix

\section{Formulae}
\label{app}

The ternary cubic $f_1$ in the example of Section~\ref{sec:ex} is
\begin{align*}  
  f_1
  &=  (10 u^7 - 8 u^6 - 56 u^5 + 42 u^4 + 60 u^3 - 39 u^2 + 36 u - 29) x^3 \\
  &+\tfrac{1}{2} (76 u^7 - 30 u^6 - 321 u^5 + 136 u^4
  + 173 u^3 - 70 u^2 + 213 u - 103) x^2 y \\
  &+\tfrac{1}{2} (-43 u^7 - 24 u^6 + 118 u^5 + 8 u^4 - 106 u^3 +
  67 u^2 + 15 u + 43) x^2 z \\
  &+ \tfrac{1}{2} (135 u^7 - 74 u^6 - 499 u^5 + 260 u^4
  + 145 u^3 - 47 u^2 + 210 u - 118) x y^2 \\
  &+ \tfrac{1}{2} (129 u^7 + 48 u^6 -
  446 u^5 - 75 u^4 + 73 u^3 - 15 u^2 + 237 u - 36) x y z \\
  &+ \tfrac{1}{2} (83 u^7 -
  19 u^6 - 200 u^5 + 192 u^4 - 27 u^3 + 78 u^2 + 54 u - 82) x z^2 \\
  &+ \tfrac{1}{2} (15 u^7 - 40 u^6 - 32 u^5 + 79 u^4 - 87 u^3 + 47 u^2
  - 27 u +
  32) y^3 \\
  &+ \tfrac{1}{2} (-61 u^7 + 46 u^6 + 295 u^5 - 260 u^4 - 299 u^3 +
  149 u^2 - 300 u + 240) y^2 z \\
  &+ \tfrac{1}{2} (-140 u^7 + 84 u^6 + 537 u^5 -
  314 u^4 - 193 u^3 + 32 u^2 - 405 u + 167) y z^2 \\
  &+ \tfrac{1}{2} (-105 u^7 - 86 u^6 + 276 u^5 + 158 u^4 + 26 u^3 - 11
  u^2 - 189 u - 115) z^3.
\end{align*}

\end{document}